\newtheorem{theorem}{Theorem}[section]
\newtheorem{lemma}[theorem]{Lemma}
\newtheorem{proposition}[theorem]{Proposition}
\newtheorem{corollary}[theorem]{Corollary}
\theoremstyle{definition}
\newtheorem{definition}[theorem]{Definition}
\theoremstyle{remark}
\newtheorem{remark}[theorem]{Remark}
\newcommand{\iprod}[3]{\left(#1,#2\right)_{#3} }
\begin{document}
	
	\title{New proper orthogonal decomposition approximation theory for PDE solution data}

\author{Sarah K.~Locke%
	\thanks{Department of Mathematics and Statistics, Missouri University of Science and Technology, Rolla, MO (\mbox{sld77@mst.edu}, \mbox{singlerj@mst.edu}).}
	\and
	John~R.~Singler%
	\footnotemark[1]
}

\maketitle

\begin{abstract}
In our previous work \cite{Singler14}, we considered the proper orthogonal decomposition (POD) of time varying PDE solution data taking values in two different Hilbert spaces. We considered various POD projections of the data and obtained new results concerning POD projection errors and error bounds for POD reduced order models of PDEs. In this work, we improve on our earlier results concerning POD projections by extending to a more general framework that allows for non-orthogonal POD projections and seminorms.  We obtain new exact error formulas and convergence results for POD data approximation errors, and also prove new pointwise convergence results and error bounds for POD projections.  We consider both the discrete and continuous cases of POD.  We also apply our results to several example problems, and show how the new results improve on previous work.%  We improve the pointwise convergence results for POD projections in \cite{Singler14}, and also obtain new or improved POD data approximation error formulas and convergence results
\end{abstract}

\textbf{Keywords:}  proper orthogonal decomposition, projections, approximation theory

\textbf{Mathematics subject classifications (2010):}  65, 41

%%%%%%%%%%%%%%%%%%%%%%%
%%------------------------------------------------------------------------------------------------
\section{Introduction}
%%------------------------------------------------------------------------------------------------

Proper orthogonal decomposition (POD) is a model order reduction technique for partial differential equations (PDEs) and other mathematical models. With this method, modes are computed from simulation or experimental data and a Galerkin projection is used with these modes to reduce the model.  Because POD reduced order models often have very low dimension, they can be used to efficiently simulate computationally demanding problems. Therefore, POD has been used in many fields of study including fluid dynamics and control theory. For a small selection of applications, see \cite{Nguyen17,Jin16,ROWLEY2005,Bergmann05,Willcox02}.  For more information about POD and many known results, see, e.g., \cite{GubischVolkwein17,HolmesLumleyBerkoozRowley12,Liang02}.

Because of the wide use of POD in many application areas, it is of great interest to study the approximation errors in POD model order reduction procedures.  Numerical analysis results for POD reduced order models of PDEs were first obtained by Kunisch and Volkwein \cite{KunischVolkwein01,KunischVolkwein02}, and then by many others; see, e.g., \cite{AllaFalconeVolkwein17,Chapelle12,ErogluKayaRebholz17,GiereIliescuJohnWells15,GraessleHinze18,GunzburgerIliescuSchneier20,GunzburgerJiangSchneier17,IliescuWang13,IliescuWang14_DQ,IliescuWang14,Jin16,KeanSchneier19,Kostova15,LiWuZhu19,MohebujjamanRebholzXieIliescu17,Rubino18,ShenSinglerZhang19,Singler14,Wang15,XieWellsWangIliescu18,ZerfasRebholzSchneierIliescu19,ZhuDedeQuarteroni17} and the references therein.

%\cite{AllaVolkwein15,AllaFalconeVolkwein17,Akman16,Chapelle12,ChapelleGariahMoireau13,ChaturantabutSorensen12,ErogluKayaRebholz17,GiereIliescuJohnWells15,GraessleHinze18,GunzburgerIliescuSchneier20,GunzburgerJiangSchneier17,Homescu05,IliescuWang14_DQ,IliescuWang14,Jin16,KeanSchneier19,Kostova15,KunischMuller15,LiWuZhu19,LuoChenNavonYang08,MohebujjamanRebholzXieIliescu17,Rubino18,ShenSinglerZhang19,Singler14,Wang15,XieWellsWangIliescu18,ZerfasRebholzSchneierIliescu19,ZhuDedeQuarteroni17} and the references therein.

Understanding POD data approximation errors is typically important for these numerical analysis works.  To see this, let $w$ be the solution of the mathematical model, let $w_r$ be the solution of the POD reduced order model, and let $\pi_r$ be a projection onto the span of the first $ r $ POD modes.  Split the error as
$$
  w - w_r = \rho_r + \theta_r, \quad \rho_r = w - \pi_r w, \quad \theta_r = \pi_r w - w_r.
$$
Energy estimates can often be used to bound $\theta_r$ by quantities including various norms of $ \rho_r $, the POD data approximation error for that projection.

In our previous work \cite{Singler14}, exact error formulas and convergence results were proven for norms of $\rho_r$ involving two Hilbert spaces, where one space is a subset of the other. In that work, we considered the continuous POD setting and proved results for different combinations of POD spaces, projections, and norms.  Shortly after \cite{Singler14}, Iliescu and Wang \cite{IliescuWang14} provided analogous error formulas for the discrete POD case, and many of the recent numerical analysis works mentioned above use results from \cite{Singler14,IliescuWang14} or extensions of these results to other scenarios.

As POD is increasingly applied in a variety of situations, it becomes more useful to have error results that can be easily applied in a wide range of scenarios.  Therefore, in this work we extend POD data approximation results in \cite{Singler14,IliescuWang14} to a generalized framework that allows us to treat non-orthogonal POD projections and seminorms.  We prove new error formulas and convergence results for norms of quantities involving $\rho_r = w - \pi_r w$ with various POD projections $ \pi_r $.  We also prove new pointwise convergence results for different POD projections.  Non-orthogonal POD projections have been used in the numerical analysis for POD reduced order models \cite{IliescuWang13,Rubino18}; however, the exact POD data approximation error formulas and convergence results obtained here are new.  Exact POD data approximation errors using various seminorms have been obtained in some cases (see, e.g., \cite[Section 3.3]{GiereIliescuJohnWells15}, \cite[Lemma 3.1]{ShenSinglerZhang19}); the general extension and convergence results in this work are new.  Finally, some pointwise convergence results for POD projections were obtained in our earlier work \cite{Singler14}; we obtain new error bounds and improved convergence results here.

The POD data approximation error formulas presented in this work are exact and do not require the use of POD inverse inequalities. We consider both the discrete and continuous cases for POD and generalize the setting in \cite{Singler14,IliescuWang14} to allow a linear mapping between two Hilbert spaces to act on the data.  We require minimal assumptions on the data, the linear operator, and the Hilbert spaces; the assumptions we do require are naturally satisfied in many applications and also allow us to obtain convergence results even in the fully continuous case when the data has infinitely many positive POD eigenvalues.  We note that most of the proof strategies in this work are new; some proofs do rely on techniques from \cite{Singler14,Singler15}.

% Further, we consider two main POD projections, one of which is allowed to be non-orthogonal, and mapping of these projections. 

The rest of the paper is outlined as follows. \Cref{Section:basic} provides both a brief general background and POD specific background for both the discrete and continuous cases. Then \Cref{Section:mainassumptions} provides an overview of the new results along with the notation and main assumptions needed. Properties of POD and POD projections are given in \Cref{Section:basicresults}. Error formulas are presented in \Cref{Section:errors} and pointwise convergence results are given in \Cref{Section:convergence}. Finally, in \Cref{Section:Examples}, we consider examples and compare the results from previous work and the current work. 

\section{Background}
\label{Section:basic}
%------------------------------------------------------------------------------------------------

In this section, we recall some functional analysis background material, and also the basic theory for discrete POD and continuous POD.  For details and proofs for the basic discrete and continuous POD theory, see, e.g., \cite{Djouadi08,GubischVolkwein17,HolmesLumleyBerkoozRowley12,KunischVolkwein02,Volkwein04,QuarteroniManzoniNegri16} and also \Cref{sec:optimality_POD_proof}.  

%In the following sections we develop the background and basic notation needed for discrete POD and continuous POD. Additional assumptions and notation will be discussed in \Cref{Section:mainassumptions}. For the remainder of the paper, we let $X$ and $Y$ be separable Hilbert spaces with inner products $(\cdot,\cdot)_X$ and $(\cdot,\cdot)_Y$, and $L: \mathcal{D}(L) \subset X \to Y$ be a linear operator.

\subsection{Functional Analysis Background}
\label{sec:FA_background}

Let $V$ and $W$ be Hilbert spaces with inner products\footnote{In this paper, all inner products and sesquilinear forms are linear in the first argument and conjugate linear in the second argument.} $ (\cdot,\cdot)_V $ and $ (\cdot,\cdot)_W $ and corresponding norms $ \| \cdot \|_V $ and $ \| \cdot \|_W $.  Throughout this work, the scalar field $ \mathbb{K}$ for all spaces is either $ \mathbb{K} = \mathbb{R} $ or $ \mathbb{K} = \mathbb{C} $.

\textbf{Linear Operators}:  Let $T:V \to W$ be a linear operator with domain $ \mathcal{D}(T) \subset V$, range $ \mathcal{R}(T) \subset W $, and null space $ \mathrm{ker}(T) \subset V $.  The \textit{rank} of $ T $ is the dimension of $ \mathcal{R}(T) $.
The operator $T$ is \textit{bounded} if $ \|Tv\|_W \leq M \|v\|_V$ for all $v \in \mathcal{D}(T)$. Throughout this paper, we only consider bounded operators $ T: V \to W $ that are defined on the whole space, so $\mathcal{D}(T) = V$.  For such a bounded operator $ T : V \to W $, the usual operator norm is given by $ \| T \| = \sup\{ \| T v \|_W : v \in V, \| v \|_V = 1 \} $.  We also consider unbounded linear operators that are not defined everywhere, so that $\mathcal{D}(T) \neq V$.  The operator $ T $ is \textit{closed} if its graph, $\mathcal{G}(T) = \{(v,w) : v \in \mathcal{D}(T), w = Tv  \}$, is closed in $V\times W$.  If $ T $ is bounded (and everywhere defined), then $ T $ is closed.  If $T$ is closed and invertible, then $T^{-1}$ is closed.
%
%Let $ \mathcal{L}(V,W) $ be the Banach space of all such bounded linear operators with the usual operator norm.  

\textbf{Adjoint Operators:} The \textit{Hilbert-adjoint operator} $ T^* : W \to V $ satisfies $ (T v, w )_W = (v, T^* w)_V $ for all $ v \in \mathcal{D}(T) $ and $ w \in \mathcal{D}(T^*) $.  If $ T $ is bounded, then $ T^* $ exists, is unique, and is also bounded.  If $ T $ is densely defined, then $ T^* $ exists, is unique, and is closed; in addition, if $ T $ is closed, then $ T^* $ is densely defined.  If $ T : V \to W $ is invertible, then we let $ T^{-*} : V \to W $ denote the Hilbert adjoint operator of the inverse $ T^{-1} : W \to V $.  We note for $T^*$ to exist we need $T$ bounded or densely defined, and for $T^{-*}$ to exist we need $T^{-1}$ bounded or densely defined.  We note these assumptions when necessary.

The following basic result is important in this work.
\begin{lemma} \label{lem1}
	Let $V$ and $ W $ be Hilbert spaces.  If $T:V\to W$ is a bounded linear operator, then $\mathrm{ker}(T T^*) = \mathrm{ker}(T^*)$ and $\mathrm{ker}(T^* T) = \mathrm{ker}(T)$.
\end{lemma}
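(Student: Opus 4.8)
The key observation is that $\ker(A)$ and $\ker(A^*A)$ are related through the identity $(A^*A v, v)_V = (Av, Av)_W = \|Av\|_W^2$, which holds for all $v \in V$ since $A = T$ or $A = T^*$ is bounded and everywhere defined. The plan is to prove the two equalities separately, each by two inclusions, using this identity for the nontrivial direction.

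First I would show $\ker(T^*T) = \ker(T)$. The inclusion $\ker(T) \subseteq \ker(T^*T)$ is immediate: if $Tv = 0$, then $T^*Tv = T^*(0) = 0$. For the reverse inclusion, suppose $v \in \ker(T^*T)$, so $T^*Tv = 0$. Then
\[
  \|Tv\|_W^2 = (Tv, Tv)_W = (T^*Tv, v)_V = (0, v)_V = 0,
\]
hence $Tv = 0$, i.e.\ $v \in \ker(T)$. This uses the defining property of the adjoint, which is valid here because $T$ is bounded and everywhere defined (so $\mathcal{D}(T) = V$ and $\mathcal{D}(T^*) = W$).

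For $\ker(TT^*) = \ker(T^*)$, I would apply the same argument with $T$ replaced by $T^*$: since $T$ is bounded, $T^*$ is also bounded and everywhere defined, and $(T^*)^* = T$. The identity $\ker\bigl((T^*)^*(T^*)\bigr) = \ker(T^*)$ from the previous paragraph becomes exactly $\ker(TT^*) = \ker(T^*)$.

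There is no real obstacle here; the only point requiring care is to confirm that all operators involved are everywhere defined and bounded, so that the adjoint identity $(Tv, w)_W = (v, T^*w)_V$ may be used freely for all $v \in V$, $w \in W$, and so that the double adjoint $T^{**}$ equals $T$. These facts are all recorded in the preceding subsection for bounded operators, so the argument is short.
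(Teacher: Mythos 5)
Your proof is correct and uses exactly the same key identity as the paper, namely $(T^*Tv,v)_V = \|Tv\|_W^2$ (the paper proves the $\ker(TT^*)=\ker(T^*)$ case directly and notes the other is similar, while you prove one case and deduce the other by replacing $T$ with $T^*$ — an equivalent organization). No gaps.
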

%The POD eigenvalues of the data set $\{z_j\}_{j=1}^s \subset Z$ are all nonzero if and only if the kernel of $T^*$ consists of only zero, where $T$ is the POD operator for the data $ \{z_j\}_{j=1}^s$.
\begin{proof}	
	We only prove the first one. Let $w \in \mathrm{ker}(T T^*)$. Then,
	$$TT^* w = 0 \Rightarrow (TT^* w,w)_W = 0 \Rightarrow (T^*w,T^*w)_{V}=0 \Rightarrow \|T^*w\|^2_{V} = 0 \Rightarrow T^*w = 0.$$
	Next, let $w \in \mathrm{ker}(T^*)$. Then
	$T^*w=0 \Rightarrow TT^*w =0$. 
\end{proof}

% The \textit{Hilbert-adjoint} of $A$, denoted $A^*$, and its inverse, denoted $A^{-*}$, also play important roles in this paper. Recall that these adjoints exist for bounded operators and operators that are densely defined. Thus for $L^*$ to exist, we need $A$ bounded or densely defined and for $A^{-*}$ to exist we need $A^{-1}$ bounded or densely defined. We note these extra assumptions when necessary. The operators are defined with the mappings $A^*:W \to V$ and $A^{-*}:V \to W$. It is known that $A^*$ is closed. Thus it is possible to use  \Cref{bochnerint} under certain conditions. Also, $A^{-*} = (A^{-1})^* = (A^*)^{-1}$.

\textbf{Projections}:  A bounded linear operator $\Pi: V \to V$ is a \textit{projection} onto $U = \mathcal{R}(\Pi) $ if $\Pi^2 = \Pi$.  Then we have $\Pi v \in U$ for all $ v \in V $ and $\Pi u = u$ for all $ u \in U $.  Also, $\Pi $ is an \textit{orthogonal projection} if $ u = \Pi v \in U$ minimizes $\inf_{u \in U}\|v-u\|_V$ for any $v \in V$.  A nontrivial orthogonal projection $ \Pi $ is automatically self-adjoint, i.e., $ \Pi^* = \Pi $, and satisfies $ \| \Pi \| = 1 $.  We consider non-orthogonal projections in this work, and therefore we do not assume a projection is orthogonal or self-adjoint unless explicitly specified.  Sometimes, we assume a family of projections $ \{ \Pi_r \} $ is uniformly bounded in operator norm, i.e., there exists a constant $ C $ such that $ \| \Pi_r \| \leq C $ for all $ r $.

%The minimization property of an orthogonal projection is very useful; sometimes, we consider projections that satisfy the following slightly weaker condition.
%%
%\begin{definition}\label{optimalapproxprop}
%	Let $\Pi_r : V \to V$ be a projection onto $V_r \subset V$. A family of projections $ \{\Pi_r\}$ satisfy the \textit{optimal approximation property} if there exists a constant $ C \geq 1 $, independent of $ r $, such that
%	\begin{equation}\label{eqn:optimalapproxprop}
%	\| v - \Pi_r v \|_V  \leq  C \inf_{ v_r \in V_r } \|v - v_r \|_V
%	\end{equation}	
%	for all $ v \in V $.
%\end{definition} 
%%
%Note that if $C=1$ in \Cref{optimalapproxprop}, then the projections must be orthogonal.  Also, the optimal approximation property implies the projections are uniformly bounded in the operator norm; specifically, $ \| \Pi_r \| \leq C $ for all $ r $.  To see this, let $ P_r $ be the orthogonal projection onto $ V_r $.  Take the $ \sup $ over $ v \in V $ in \eqref{eqn:optimalapproxprop} to get $ \| I - \Pi_r \| \leq C \| I - P_r \| $.  The uniform boundedness follows from the identities $ \| \Pi_r \| = \| I - \Pi_r \| $ and $ \| P_r \| = \| I - P_r \| = 1 $ \cite{Szyld06}.

\textbf{The Singular Value Decomposition of a Compact Operator:} If $T: V \to W$ is a compact linear operator, with separable Hilbert spaces $V$ and $W$, then $T$ has a \textit{singular value decomposition} (SVD). The positive singular values of $T$ are defined to be the square roots of the positive eigenvalues of the self-adjoint nonnegative compact operators $TT^* : W \to W $ and $T^*T : V \to V$.  Further, the nonzero eigenvalues of these operators are equal, and we consider zero a singular value of $ T $ if either operator has a zero eigenvalue. If the ordered singular values of $T$ are given by $ \mu_1 \geq \mu_2 \geq \cdots \geq 0 $ (including repetitions), the orthonormal basis of eigenvectors of $TT^*$ is given by $\{ \psi_k \} \subset W$, and the orthonormal basis of eigenvectors of $T^*T$ is given by $\{ g_k \} \subset V$, then the singular value decomposition of $ T $ is the expansion given by
\begin{equation*}
Tg = \sum_{k\geq 1} \mu_k (g, g_k)_V \psi_k
\end{equation*}
for all $g \in V$. If $\mu_k > 0$, then 
$$
Tg_k = \mu_k \psi_k \text{ and } T^*\psi_k = \mu_k g_k.
$$
Also, the rank $ r $ truncated SVD $ T_r : V \to W $ of $ T $ is defined for $ g \in V $ by
$$
T_r g := \sum_{k = 1}^r \mu_k (g, g_k)_V \psi_k.
$$
For more information, see, e.g., \cite[Chapters VI--VIII]{GohbergGoldbergKaashoek90}, \cite[Section V.2.3]{Kato95}, \cite[Chapter 30]{Lax02}, \cite[Sections VI.5--VI.6]{ReedSimon80}.
%
%Further we define the \textit{POD eigenvalues} to be the eigenvalues of $TT^*:W\to W$. The \textit{POD singular values} are the singular values of $T$.

\textbf{Hilbert-Schmidt Operators:} Let $T: V \to W$ be a linear operator, with separable Hilbert spaces $V$ and $W$, and let $\{g_k\}$ be any orthonormal basis for $V$. Define the \textit{Hilbert-Schmidt norm} of $ T $ as
\begin{equation}
\|T\|_{\mathrm{HS}(V,W)} = \bigg(  \sum_{k \geq 1} \|T g_k\|_V^2 \bigg)^{1/2}.
\end{equation}
If the sum converges we say the operator $T$ is \textit{Hilbert-Schmidt}. The Hilbert-Schmidt norm is independent of choice of orthonormal basis, every Hilbert-Schmidt operator is compact, $ \| T \| \leq \| T \|_{\mathrm{HS}(V,W)} $, $T$ is Hilbert-Schmidt if and only if $T^*$ is Hilbert-Schmidt, and $ T $ is Hilbert-Schmidt if and only if $ \sum_{k \geq 1} \sigma_k^2 < \infty $, where $ \{ \sigma_k \} $ are the singular values (including repetitions) of $ T $.  We also have
$$
\|T\|^2_{\mathrm{HS}(V,W)} = \|T^*\|^2_{\mathrm{HS}(W,V)} = \sum_{k \geq 1} \sigma_k^2.
$$
%Furthermore, $ T_r $, the rank $ r $ truncated SVD of $ T $, gives an optimal rank $ r $ approximation to $ T $ in the Hilbert-Schmidt norm, and the approximation error is given by
%$$
%  \| T - T_r \|^2_{\mathrm{HS}(V,W)} = \sum_{k > r} \sigma_k^2.
%$$
For more, see, e.g., \cite[Chapter VIII]{GohbergGoldbergKaashoek90}, \cite[Section V.2.4]{Kato95}, \cite[Section VI.6]{ReedSimon80}.

\textbf{Bochner Spaces:}  Let $ \mathcal{O} $ be an open subset of $ \mathbb{R}^d $, for some $ d \geq 1 $.  For $ p \in [1,\infty) $, let $ L^p(\mathcal{O};V) $ denote the Bochner space of (equivalence classes of) Lebesgue measurable functions $ v : \mathcal{O} \to V $ satisfying $ \int_\mathcal{O} \| v(t) \|_V^p \, dt < \infty $.  For $ p = 2 $, $ L^2(\mathcal{O};V) $ is a Hilbert space with inner product
$$
(v,w)_{L^2(\mathcal{O};V)} = \int_\mathcal{O} ( v(t), w(t) )_V \, dt.
$$
The following theorem, see, e.g., \cite[Theorem III.6.20]{DunfordSchwartz58} and \cite[Theorem 4.2.10]{Miklavcic98}, allows us to bring a closed linear operator inside an integral.
\begin{theorem}\label{bochnerint}
	Suppose $T:\mathcal{D}(T) \subset V \to W$ is a closed linear operator. If $v: \mathcal{O} \to \mathcal{D}(T)$, $v \in L^1(\mathcal{O};V)$, and $Tv \in L^1(\mathcal{O};W)$, then 
	$$\int_\mathcal{O} v(t) \, dt \in \mathcal{D}(T) \quad \text{ and } \quad  T\int_\mathcal{O} v(t) \, dt = \int_\mathcal{O} Tv(t) \, dt.$$
\end{theorem}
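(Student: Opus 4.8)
The plan is to realize $T$ through its graph $\mathcal{G}(T) = \{(v,Tv) : v \in \mathcal{D}(T)\} \subset V \times W$, which by hypothesis is a closed subspace of the Hilbert space $V \times W$ (equipped with the product inner product), hence itself a Hilbert space. Define $u : \mathcal{O} \to V \times W$ by $u(t) = (v(t), Tv(t))$. Since $v(t) \in \mathcal{D}(T)$ for every $t$, we have $u(t) \in \mathcal{G}(T)$ for every $t$; and since $v \in L^1(\mathcal{O};V)$ and $Tv \in L^1(\mathcal{O};W)$, the function $u$ is Bochner integrable, with $\int_\mathcal{O} u(t)\,dt = \left( \int_\mathcal{O} v(t)\,dt, \int_\mathcal{O} Tv(t)\,dt \right)$ — this last identity because the two coordinate projections $V \times W \to V$ and $V \times W \to W$ are bounded linear operators and bounded operators commute with the Bochner integral.

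The crux is then the fact that the Bochner integral of a function taking values in a closed subspace stays in that subspace: if $M$ is a closed subspace of a Banach space $X$ and $f \in L^1(\mathcal{O};X)$ with $f(t) \in M$ a.e., then $\int_\mathcal{O} f(t)\,dt \in M$. I would establish this via Hahn--Banach: $M = \bigcap\{ \ker \phi : \phi \in X^*,\ \phi|_M = 0 \}$, and for each such annihilating functional $\phi$ the scalar function $\phi \circ f$ vanishes a.e., so, since bounded linear functionals pass through the Bochner integral, $\phi\!\left( \int_\mathcal{O} f(t)\,dt \right) = \int_\mathcal{O} \phi(f(t))\,dt = 0$; as this holds for every such $\phi$, we get $\int_\mathcal{O} f(t)\,dt \in M$. (Equivalently, one may approximate $f$ in $L^1$ by simple functions with values in $M$ and use that $M$ is closed under limits of finite linear combinations of its elements.)

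Applying this with $X = V \times W$, $M = \mathcal{G}(T)$, and $f = u$ yields $\int_\mathcal{O} u(t)\,dt \in \mathcal{G}(T)$, that is, $\left( \int_\mathcal{O} v(t)\,dt, \int_\mathcal{O} Tv(t)\,dt \right) \in \mathcal{G}(T)$. By the definition of the graph this says precisely that $\int_\mathcal{O} v(t)\,dt \in \mathcal{D}(T)$ and $T \int_\mathcal{O} v(t)\,dt = \int_\mathcal{O} Tv(t)\,dt$, which is the claim. The only genuinely delicate point is the subspace-invariance of the Bochner integral; everything else is bookkeeping with the product space and the elementary commutation of bounded operators with the integral. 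Since this is a classical statement (essentially Hille's theorem), one could also simply cite it, but the graph argument above is self-contained.
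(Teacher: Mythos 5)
Your proof is correct. The paper does not actually prove \Cref{bochnerint} --- it is quoted as a known result (Hille's theorem) with citations to Dunford--Schwartz and Miklav\v{c}i\v{c} --- and your graph argument (pass to $u(t)=(v(t),Tv(t))$ in the closed subspace $\mathcal{G}(T)\subset V\times W$, use that coordinate projections commute with the Bochner integral, and invoke the Hahn--Banach characterization of a closed subspace to keep $\int_\mathcal{O} u$ inside $\mathcal{G}(T)$) is precisely the standard proof given in those references, with every step sound.
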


%In the following sections we develop the background and basic notation needed for discrete POD and continuous POD. Additional assumptions and notation will be discussed in \Cref{Section:mainassumptions}. For the remainder of the paper, we let $X$ and $Y$ be separable Hilbert spaces with inner products $(\cdot,\cdot)_X$ and $(\cdot,\cdot)_Y$, and $L: \mathcal{D}(L) \subset X \to Y$ be a linear operator.
%------------------------------------------------------------------------------------------------
\subsection{Discrete POD}\label{discretebackground}

%------------------------------------------------------------------------------------------------

Let $ X $ be a separable Hilbert space.  For the discrete case, let $s$ be a positive integer and assume the POD data is given by $\{w_j\}_{j=1}^s \subset X$. Let $ \mathbb{K} = \mathbb{R} $ or $ \mathbb{K} = \mathbb{C} $, and define $S := \mathbb{K}_\Gamma^s$ with the weighted inner product given by
$$(u,v)_S = v^*\Gamma u = \sum_{j=1}^{s} \gamma_j u_j \overline{v_j},$$
where $u,v \in S$, $\Gamma = \text{diag}(\gamma_1, \gamma_2, ..., \gamma_s)$, and the values $\{\gamma_j\}_{j=1}^s$ are positive weights. Note these weights commonly arise from integral approximations.
 Define the POD operator $ K : S \to X $ by
\begin{equation}\label{XPODdiscrete}
K f = \sum_{j=1}^s \gamma_j f_j \, w_j,  \quad  f = [ f_1, f_2, \ldots, f_s ]^T.
\end{equation}

Since $ K $ has finite dimensional range, it is a compact operator and has a singular value decomposition.  Let $ \{ \sigma_k, f_k, \varphi_k \} \subset \mathbb{R} \times S \times X $ be the singular values and orthonormal singular vectors ordered so that $ \sigma_1 \geq \sigma_2 \geq \cdots \geq 0 $.  Thus, the singular value decomposition is given by 
\begin{equation}\label{eqn:POD_operator_SVD}
K f = \sum_{j \geq 1} \sigma_{j} (f,f_j)_S \varphi_j.
\end{equation}
When $ \sigma_k > 0 $, we have
$$
K f_k = \sigma_k \varphi_k,  \quad \text{and}  \quad K^* \varphi_k = \sigma_k f_k,
$$
where $ K^* : X \to S $ is the Hilbert adjoint operator given by
\begin{equation*}
K^* x = [ (x,w_1)_X, (x,w_2)_X, \ldots, (x,w_s)_X ]^T.
\end{equation*}

For a positive integer $ r $, define $ X_r = \mathrm{span}\{ \varphi_k \}_{k=1}^r $.  Let $ \Pi_r^X : X \to X $ be the orthogonal projection onto $ X_r $, i.e., for $ x \in X $ fixed, $ \Pi^X_r x \in X_r $ minimizes the approximation error $ \| x - x_r \|_X $ over all choices of $ x_r \in X_r $.  Since $ \{ \varphi_k \} $ is an orthonormal set in $ X $, we have the exact representation
\begin{equation}\label{Pi^Xdef}
\Pi_r^X x = \sum_{k=1}^r (x,\varphi_k)_X \varphi_k.
\end{equation}

The singular vectors $ \{ \varphi_k \} $ are called the POD modes of the data $ \{ w_k \} \subset X $.  The POD modes provide the best low rank approximation to the data in the following sense: we have
\begin{equation}\label{knownPODerror}
\sum_{k=1}^s \gamma_k \| w_k - \Pi_r^X w_k \|_X^2  =  \sum_{k > r}  \sigma_k^2,
\end{equation}
and no other choice of an orthonormal basis in \eqref{Pi^Xdef} gives a smaller value for the approximation error.

\begin{definition}\label{def:POD_svalues_evalues_modes_s_X}
	We call the singular values $ \{ \sigma_k \} $ and singular vectors $ \{ \varphi_k \} \subset X $ of $ K $ the \textit{POD singular values} and \textit{POD modes} for the data $ \{ w_j \}_{j=1}^s $, respectively.  We also call the eigenvalues $ \{ \lambda_k \} $ of the operator $ K K^* : X \to X $ the \textit{POD eigenvalues} for the data $ \{ w_j \}_{j=1}^s $.  We let $ s_X $ denote the number of positive POD singular values (or positive POD eigenvalues) for the data $ \{ w_j \}_{j=1}^s $, i.e., $ s_X = \mathrm{rank}(K) $.
\end{definition}
From \Cref{sec:FA_background}, we know $ \lambda_k = \sigma_k^2 $ whenever $ \lambda_k > 0 $.  Also, we have $ s_X \leq s < \infty $.  It is possible for data to have a zero POD singular value, but have all positive POD eigenvalues; this can happen if $ s > \dim(X) $.

%One of the goals of this paper is to find extensions of this error formula to other scenarios involving the operator $ L $.

%We can define the truncated singular value decomposition for the POD operator $K$. Define $K_r:\mathbb{K}^s\to X$ by 
%\begin{equation*}
%K_r f = \sum_{j = 1}^r \sigma_{j} (f,f_j)_{\mathbb{K}^s} \varphi_j. 
%\end{equation*}
%Note that $LK_r = L\Pi_r^XK$.

%

%------------------------------------------------------------------------------------------------
\subsection{Continuous POD}\label{continuousbackground}

%------------------------------------------------------------------------------------------------

Similarly to the discrete case we define the POD operator $K:S\to X$ for the continuous case, where again $ X $ is a separable Hilbert space. Let $d $ and $m$ be positive integers and let $\mathcal{O} \subset \mathbb{R}^d$ be an open set. Then define $S := L^2(\mathcal{O}; \mathbb{K}^m)$, where $ \mathbb{K} = \mathbb{R} $ or $ \mathbb{K} = \mathbb{C} $.  We note that $ L^2(\mathcal{O}) $ is separable (see, e.g., \cite[Theorem 2.5-4]{Ciarlet13}), and therefore so is $ S $.  Assume the POD data is given by $\{w_j\}_{j=1}^m \subset L^2(\mathcal{O};X)$. 
%
%The space $S$ is a Hilbert space with inner product defined by
%$$ (f,g)_S = \int_{\mathcal{O}} f(t) \overline{g(t)} dt $$
%and norm 
%$$\|h \|_{L^2(\mathcal{O})} = (h,h)^{1/2}_{L^2(\mathcal{O})}.$$

\begin{remark}
	In POD applications the set $ \mathcal{O}$ is frequently a time interval; however, researchers also take $ \mathcal{O}$ to be a multidimensional parameter domain as well.  Note that we could also consider multiple open sets, $\mathcal{O}_j \subset \mathbb{R}^{d_j}$, and data $ w_j \in L^2(\mathcal{O}_j;X) $ for $ j = 1, \ldots, m $.  In this case, we would define $S := L^2(\mathcal{O}_1) \times \cdots \times L^2(\mathcal{O}_m)$. All results in this paper hold for this case as well. The previous case is chosen to simplify notation.
\end{remark}

%Let $L^2(\mathcal{O};X)$ be the Hilbert space of functions $w(t)$ such that $w(t) \in X$ for almost every $t \in \mathcal{O}$. Then the inner product is given by
%$$ (w,z)_{L^2(\mathcal{O};X)} = \int_{\mathcal{O}} (w(t),z(t))_X dt, $$
%and the norm is given by
%$$ \|w\|_{L^2(\mathcal{O};X)} = \left( \int_{\mathcal{O}} \|w(t)\|^2_X dt \right)^{1/2}. $$
%
%Now, assume the POD data is given by $\{w_j\} \subset L^2(\mathcal{O};X)$. For each $j$, $w_j(t) \in \mathcal{D}(L)$ for almost every $t \in \mathcal{O}$ and $\{Lw_j\} \in L^2(\mathcal{O};Y)$. 

Define the POD operator $K: S \to X$ by 
\begin{equation}\label{XPODcontinuous}
K f = \sum_{j=1}^{m} \int_{\mathcal{O}} f_j(t) w_j(t) dt, \quad f \in S.
\end{equation}
Since $f \in S$, note that $f = [f_1,f_2,\ldots,f_m]^T$, where each $f_j \in L^2(\mathcal{O})$. As in the discrete case, we know that $K$ is a compact operator and has a singular value decomposition.  We let $ \{ \sigma_k, f_k, \varphi_k \} \subset \mathbb{R} \times S \times X $ denote the singular values and orthonormal singular vectors ordered so that $ \sigma_1 \geq \sigma_2 \geq \cdots \geq 0 $. The SVD of $ K $ is given as in the discrete case \eqref{eqn:POD_operator_SVD}. Thus, when $ \sigma_k > 0 $, we have
$$
K f_k = \sigma_k \varphi_k,  \quad\text{and}  \quad  K^* \varphi_k = \sigma_k f_k,
$$
where $ K^* : X \to S $ is the Hilbert adjoint operator defined by
\begin{equation*}
[K^* x](t) = [ (x,w_1(t))_X, (x,w_2(t))_X, \ldots, (x,w_m(t))_X ]^T.
\end{equation*}

We define $ X_r := \mathrm{span}\{ \varphi_k \}_{k=1}^r $ and the orthogonal projection $\Pi_r^X : X \to X$ \eqref{Pi^Xdef} as before.  The data approximation error is given by
\begin{equation}\label{knownPODerror_cont}
\sum_{j=1}^m \int_\mathcal{O} \| w_j(t) - \Pi_r^X w_j(t) \|_X^2 \, dt  =  \sum_{k > r}  \sigma_k^2,
\end{equation}
and the error goes to zero as $ r \to \infty $.  As in the discrete case, no other orthonormal basis in \eqref{Pi^Xdef} gives a smaller value for the error.

We define the POD singular values, POD modes, POD eigenvalues, and $ s_X = \mathrm{rank}(K) $ as in \Cref{def:POD_svalues_evalues_modes_s_X} for the discrete case.  Again, it is possible for data to have a zero POD singular value, but have all positive POD eigenvalues; an example where $ X $ is infinite dimensional can be found in \cite[Section 3.1, Example 3]{Singler15}.  Also, if $ X $ is finite dimensional, then the data always has a zero POD singular value.

%%------------------------------------------------------------------------------------------------
\section{Main Assumptions, Notation, and New Results}\label{Section:mainassumptions}
%%------------------------------------------------------------------------------------------------

In this section we highlight the notation used in each case as well as the main assumptions made throughout the paper. Further we briefly present an overview of the new results and give an example to illustrate how the new results can be used.

Throughout the remainder of this paper, assume $ X $ and $ Y $ are separable Hilbert spaces, and $L: \mathcal{D}(L) \subset X \to Y$ is a linear operator.  We study POD error formulas and POD projections involving the data $ \{ w_j \} $ and the data $ \{ L w_j \} $.

%%------------------------------------------------------------------------------------------------
\subsection{Discrete Case: Assumptions and Notation}\label{discretebackground2}
%%------------------------------------------------------------------------------------------------

Recall from \Cref{discretebackground} we consider data $ \{ w_j \}_{j=1}^s \subset X $ and the corresponding POD operator $ K : S \to X $ defined by $ K f = \sum_{j=1}^s \gamma_j f_j w_j $, where $ S = \mathbb{K}_\Gamma^s $ and $ \mathbb{K} $ is either $ \mathbb{R}$ or $ \mathbb{C}$.  The singular value decomposition of $ K $ is given by $ K f = \sum_{k \geq 1} \sigma_k ( f, f_k )_S \varphi_k $.  The set $ X_r $ is the span of $ \{ \varphi_k \}_{k=1}^r $, and $ \Pi_r^X : X \to X $ is the orthogonal projection onto $ X_r $.

To consider POD projections involving the data $ \{ L w_j \} $, we make the following assumption:
\begin{quote}
	\textbf{Main assumption:}  For the discrete case, we assume throughout the paper that (i) $ \{ w_j \}_{j=1}^s \subset \mathcal{D}(L) $, and also (ii) $ \sigma_r > 0 $ whenever we consider the projection $ \Pi_r^X $.
\end{quote}
Assumption (i) has two important consequences.  First, since $ w_j \in \mathcal{D}(L) $ for each $ j $, we know the range of $ K $ is contained in $ \mathcal{D}(L) $.  Second, assumption (i) allows us to consider the POD operator $ K^Y : S \to Y $ for the data $ \{ L w_j \}_{j=1}^s \subset Y $ defined by
\begin{equation}\label{YPODdiscete}
K^Y f = LK f =  \sum_{j = 1}^s \gamma_j f_j Lw_j,  \quad  f = [ f_1, f_2, \ldots, f_s ]^T.
\end{equation}
Note that $ K^Y $ is the result of applying $L$ to the POD operator $ K $ for the data $ \{ w_j \} $, i.e., $ K^Y = LK $.  Since $ K^Y $ has finite rank, it is compact and has a singular value decomposition.  Define $ s_Y = \mathrm{rank}(K^Y) $ to be the number of positive singular values of $ K^Y $.  Note that assumption (i) is automatically satisfied if $ L $ is bounded.

For assumption (ii), note that if $ \sigma_k > 0 $, then assumption (i) implies the corresponding singular vector $ \varphi_k $ is in $ \mathcal{D}(L) $ since
\begin{equation}\label{eqn:PODmodes_domL}
\varphi_k = \sigma_k^{-1} K f_k \in \mathcal{D}(L).
\end{equation}
Since $ \sigma_r > 0 $, this implies $ X_r \subset \mathcal{D}(L) $ and $ \Pi_r^X $ maps into $ \mathcal{D}(L) $.

\medskip

To guarantee the boundedness of certain POD projections, in some cases of \Cref{boundedext} we need to assume the POD modes $ \{ \varphi_k \}_{k=1}^r \subset \mathcal{D}(L) $ satisfy some additional regularity properties.  These properties can be guaranteed by making additional regularity assumptions on the data.

First, the condition $\{\varphi_k\}_{k=1}^r \subset \mathcal{D}(L^{-*})$ is guaranteed to hold if we assume $ \sigma_r > 0 $ and $w_j \in \mathcal{D}(L^{-*})$ for each $j$.  With this assumption, we know as above that $\mathcal{R}(K) \subset \mathcal{D}(L^{-*})$ and also $ \varphi_k \in \mathcal{D}(L^{-*}) $ whenever $ \sigma_k > 0 $.  Since $\sigma_r >0$, we can guarantee $\{\varphi_k\}_{k=1}^r \subset \mathcal{D}(L^{-*})$.

Next, a similar argument using \eqref{eqn:PODmodes_domL} shows the condition $\{L\varphi_k\}_{k=1}^r \subset \mathcal{D}(L^{*})$ is guaranteed to hold if we assume $ \sigma_r > 0 $ and $Lw_j \in \mathcal{D}(L^{*})$ for each $j$.

\subsection{Continuous Case: Assumptions and Notation}\label{contcase:mainassumption}
%%------------------------------------------------------------------------------------------------

The continuous case requires a few more assumptions.  Recall $ K : S \to X $, where $S := L^2(\mathcal{O}; \mathbb{K}^m)$ and $ \mathbb{K} $ is either $ \mathbb{R}$ or $ \mathbb{C}$.  In order to define the POD operator $K^Y$ and ensure $ \{ \varphi_k \}_{k=1}^r \subset \mathcal{D}(L) $, we make the following assumption:% must first make the following assumption that will hold for the remainder of the paper, whenever the continuous case is considered.% In addition to allowing $K^Y$ to be defined, this assumption will be used to guarantee $\varphi_k \in \mathcal{D}(L)$ for all $k$. 
\begin{quote}
	\textbf{Main assumption:}  For the continuous case, we assume throughout the paper that (i) $ \{ Lw_j \}_{j=1}^m \subset L^2(\mathcal{O};Y) $, and for all $f \in S$ we have $Kf \in \mathcal{D}(L)$ and
	\begin{equation*}
	LKf = \sum_{j=1}^{m} \int_{\mathcal{O}} f_j(t) Lw_j(t)dt,
	\end{equation*}
	and also (ii) $ \sigma_r > 0 $ whenever we consider the projection $ \Pi_r^X $.
\end{quote}
%
%\textbf{\textit{Main assumption (for continuous case)}:} If $f \in L^2(S)$ then $Kf \in \mathcal{D}(L)$, and 
%\begin{equation*}
%LKf = \sum_{j=1}^{m} \int_{\mathcal{O}} f_j(t) Lw_j(t)dt.
%\end{equation*}
%
As in the discrete case, assumption (i) gives $ \mathcal{R}(K) \subset \mathcal{D}(L) $ and allows us to define the (compact) POD operator $ K^Y = LK $ for the data $ \{ Lw_j \}_{j=1}^m \subset L^2(\mathcal{O};Y) $.  As before, we let $ s_Y = \mathrm{rank}(K^Y) $ be the number of positive singular values of $ K^Y $.  Also as in the discrete case, assumptions (i) and (ii) imply $ \{ \varphi_k \}_{k=1}^r \subset \mathcal{D}(L) $ and $ \Pi_r^X $ maps into $ \mathcal{D}(L) $.

\begin{remark}\label{remark:cont_assumption}
	There are three common conditions that guarantee assumption (i) holds. 
	\begin{enumerate}
		\item If $L:X\to Y$ is bounded, the operator $L$ can be pulled through the integral in the definition of $K$ and assumption (i) clearly holds. 
		
		\item\label{remark:cont_assumption_finite_sum} If each $w_j \in L^2(\mathcal{O};X)$ takes the form $$w_j(t) = \sum_{\ell,k = 1}^{n_j} a_{jk\ell} g_{kj} (t) x_{\ell j},$$
		where $a_{jk\ell}$ are constants in $\mathbb{K}$, $g_{kj} (t) \in L^2(\mathcal{O})$, and $ x_{\ell j} \in \mathcal{D}(L)$, then it can be checked that assumption (i) holds. % we know %$w_j(t) \in \mathcal{D}(L)$ which implies $Kf \in \mathcal{D}(L)$. Also,
		%\begin{align*}
		%Kf &= \sum_{j=1}^{m}\int_\mathcal{O} f_j(t)\sum_{k,\ell = 1}^{n_j} a_{jkl} g_{kj} (t) x_{\ell j} dt \\
		%& = \sum_{j=1}^{m} \sum_{k,\ell = 1}^{n_j} \left( \int_\mathcal{O} f_j(t)g_{kj} (t) dt \right) a_{jkl} x_{\ell j}
		%\end{align*} 
		%So, 
		%\begin{align*}
		%LKf &= \sum_{j=1}^{m} \sum_{k,\ell = 1}^{n_j} \left( \int_\mathcal{O} f_j(t)g_{kj} (t) dt \right) a_{jkl} Lx_{\ell j} \\
		%& = \sum_{j=1}^{m} \int_{\mathcal{O}} f_j(t) \sum_{k,\ell = 1}^{n_j} a_{jkl} g_{kj} (t)  Lx_{\ell j} \\
		%& = \sum_{j=1}^{m} \int_{\mathcal{O}} f_j(t) Lw_j(t) dt
		%\end{align*}
		%which shows the main assumption holds under these conditions.
		This condition is similar to the assumption made in the discrete case. 
		
		\item\label{remark:cont_assumption_closed} If $L:\mathcal{D}(L)\subset X \to Y$ is closed, $w_j \in \mathcal{D}(L)$ a.e., and $Lw_j \in L^2(\mathcal{O};Y)$ then \Cref{bochnerint} implies assumption (i) holds. 
	\end{enumerate}
\end{remark}

%Now, we can define the POD operator $K^Y:\mathbb{K}^s \to Y$ with data $\{Lw_j\}_{j=1}^s \subset Y$. This operator is defined by 
%\begin{equation}\label{YPODcontinuous}
%K^Y f = LK f =  \sum_{j = 1}^m \int_{\mathcal{O}} f_j(t) Lw_j(t),  \quad  f \in S.
%\end{equation}
%%
%Again, this operator is compact and has a singular value decomposition. Let $ \{ \sigma_k^Y, f_k^Y, \varphi_k^Y \} \subset \mathbb{R} \times S \times Y $ be the singular values and orthonormal singular vectors ordered so that $ \sigma_1^Y \geq \sigma_2^Y \geq \cdots \geq 0 $.  When $ \sigma_k^Y > 0 $, we have
%$$
%K^Y f_k^Y = \sigma_k^Y \varphi_k^Y,  \quad \text{and}  \quad (K^Y)^* \varphi_k^Y = \sigma_k^Y f_k^Y,
%$$
%where $ (K^Y)^* : Y \to S $ is the Hilbert adjoint operator defined by
%$$
%(K^Y)^* y = [ (y,Lw_1(t))_Y, (y,Lw_2(t))_Y, \ldots, (y,Lw_m(t))_Y ]^T.
%$$
%%
%We also have the same truncated singular value decomposition as we did in \Cref{discretebackground2}, where $K^Y_r : S \to Y$ is defined by \Cref{svdLK}.

%The main assumption gives us the fact that $\mathcal{R}(K) \subset \mathcal{D}(L)$. So, if $ \sigma_k > 0 $, then the corresponding singular vector $ \varphi_k $ is in $ \mathcal{D}(L) $ since
%$$
%\varphi_k = \sigma_k^{-1} K f_k \in \mathcal{D}(L).
%$$
%This implies $ X_r \subset \mathcal{D}(L) $ and $ \Pi_r^X $ maps into $ \mathcal{D}(L) $ if $ \sigma_r > 0 $. This fact is why the main assumption is so important to the continuous time results.

Again, for certain cases of \Cref{boundedext} we need to assume the POD modes $ \{ \varphi_k \}_{k=1}^r \subset \mathcal{D}(L) $ satisfy some additional regularity properties.  As in the discrete case, we can make additional assumptions on the data to satisfy these regularity properties.

We briefly mention conditions on the data similar to \Cref{remark:cont_assumption}, \Cref{remark:cont_assumption_closed} that yield the needed regularity.  First, if $L^{-*}$ exists, it is closed.  Therefore, $\{\varphi_k\}_{k=1}^r \subset \mathcal{D}(L^{-*})$ holds if we assume $ \sigma_r > 0 $, $w_j \in \mathcal{D}(L^{-*})$ a.e., and $ \{ L^{-*}w_j \}_{j=1}^m \in L^2(\mathcal{O};Y) $.  Second, if $ L^* $ exists, then it is closed.  Therefore, $\{L \varphi_k\}_{k=1}^r \subset \mathcal{D}(L^{*})$ holds if we assume $ \sigma_r > 0 $, $Lw_j \in \mathcal{D}(L^{*})$ a.e., and $ \{ L^{*}Lw_j \}_{j=1}^m \in L^2(\mathcal{O};X) $.

We also note that the condition in \Cref{remark:cont_assumption}, \Cref{remark:cont_assumption_finite_sum} can be modified similarly to the discrete case to yield the required regularity.

%Also, if $L^{-*}$ exists, it is closed. Then, using \Cref{bochnerint}, we can show if $w_j(t) \in \mathcal{D}(L^{-*})$ almost everywhere (a.e.) and $L^{-*}w_j \in L^2(S;Y)$, $\varphi_j \in \mathcal{D}(L^{-*})$ for all $k$.

%Similarly to the discrete case, we have the Hilbert-adjoint, $L^*:Y \to X$ and its inverse, $L^{-*}: X \to Y$. Note $L^*$ exists when $L$ is bounded or densely defined. For $L^{-*}$ to exist, we need $L^{-1}$ to be bounded or densely defined. Also, using \Cref{bochnerint} we can show if $w(t) \in \mathcal{D}(L^{-*})$ almost everywhere (a.e.), $\varphi_k \in \mathcal{D}(L^{-*})$.

%%------------------------------------------------------------------------------------------------
\subsection{An Overview of the New Results}
\label{sec:overview}
%%------------------------------------------------------------------------------------------------

Here we give an overview of the new results presented in this paper. For the overview we focus on the continuous case, but there are analogous results for the discrete case.

Recall the standard POD orthogonal projection, $\Pi_r^X: X \to X$ given by \eqref{Pi^Xdef}, and the known POD data approximation error given by
$$
\sum_{j=1}^m \int_{\mathcal{O}} \| w_j(t) - \Pi_r^X w_j(t) \|_X^2 dt = \sum_{k > r} \sigma_{k}^2.
$$
One of the goals of this paper is to find extensions of this error formula to other scenarios involving the linear operator $ L : X \to Y $ and another sequence of projections, which need not be orthogonal.
\begin{definition}
	For a positive integer $r$ with $ \sigma_r > 0 $, we define $Y_r := LX_r = \mathrm{span}\{L\varphi_k\}_{k=1}^r$ and we let $\Pi_r^Y: Y \to Y$ be a projection onto $Y_r$. 
\end{definition}
\begin{remark}
	First, the condition $ \sigma_r > 0 $ implies $ X_r \subset \mathcal{D}(L) $ and so the definition makes sense.  We assume throughout that $ \sigma_r > 0 $ whenever we consider $ \Pi_r^Y $.  Next, it is important to note that unless stated otherwise we do not assume the projection $ \Pi_r^Y $ is orthogonal.  To obtain convergence results as $ r $ increases, we sometimes need to require $ \{ \Pi_r^Y \} $ are uniformly bounded in operator norm.  If $ \{ \Pi_r^Y \} $ are the orthogonal projections onto $ Y_r $, then this condition is satisfied.
\end{remark}

% For a positive integer $r$, define $Y_r = LX_r = \mathrm{span}\{L\varphi_j\}_{j=1}^r$. Let $\Pi_r^Y: Y \to Y$ be a projection onto $Y_r$. It is important to note that unless stated otherwise we do not assume this projection is orthogonal. We do not get an exact representation of the action of the projection as in \Cref{Pi^Xdef}. However we can make use of the optimal approximation property, when needed. 

%We have shown exact expressions for the POD data approximation errors $\|w - \pi_r w\|_{L^2(\mathcal{O};W)}$ for the generalized case, using the mapping, $L:X \to Y$, two POD projections $\pi_r$ and two error norms, $W = X$ and $W = Y$.

Under the main assumption we have the data approximation errors
\begin{equation}\label{error1}
\sum_{j=1}^m \int_{\mathcal{O}} \|L w_j(t) - L \Pi _r^X w_j(t) \|^2_{Y} dt  = \sum_{k>r} \sigma_k^2 \|L \varphi_k \|^2_Y
\end{equation}
and	
\begin{equation}\label{error2}
\sum_{j=1}^m \int_{\mathcal{O}}  \|L w_j(t) -  \Pi _r^Y L w_j(t) \|^2_{Y} dt  = \sum_{k>r} \sigma_k^2 \|L \varphi_k - \Pi _r^Y L \varphi_k \|^2_Y.
\end{equation}
The error in \eqref{error1} converges to zero as $r \to \infty$, and the error in \eqref{error2} tends to zero as $ r $ increases when the projections $ \{ \Pi_r^Y \}$ are uniformly bounded.  Also, under a basic condition on $L^{-1}$, we have the data approximation error
\begin{equation}\label{error3}
\sum_{j=1}^m \int_{\mathcal{O}}  \| w_j(t) - L^{-1} \Pi _r^Y L w_j(t) \|^2_{X} dt  = \sum_{k>r} \sigma_k^2 \| \varphi_k - L^{-1} \Pi _r^Y L \varphi_k \|^2_X .
\end{equation}
There are further conditions implying the error in \eqref{error3} converges to zero as well. The details for the assumptions, theorem statements, and proofs can be found in \Cref{Section:ContError} for the continuous case and \Cref{discreteerror} for the discrete case.

We also have pointwise convergence results in \Cref{Section:convergence} for both the discrete and continuous cases. One new result gives that if all POD eigenvalues for the data $\{Lw_j\}$ are nonzero and $ \{ \Pi_r^Y \}$ is uniformly bounded, then $\Pi_r^Y y \to y$ for all $y \in Y$ as $r$ increases. We also prove error bounds for pointwise convergence of the other projections considered. Boundedness of either $L$ or $L^{-1}$, along with various range conditions, also play important roles in the pointwise convergence of these POD projections and their mappings.

In the pointwise convergence result for $ \Pi_r^Y $ mentioned above, we required all of the POD eigenvalues for $\{Lw_j\}$ to be nonzero.  This improves on a similar result from our earlier work \cite{Singler14}, where we assumed all of the POD \textit{singular values} are nonzero.  The current result is less restrictive; see \Cref{discretebackground,continuousbackground}.  We also explore the boundedness of certain non-orthogonal POD projections in \Cref{sec:nonorth_POD_projections} and the relationship between the two sets of POD singular values for the data $ \{ w_j \} $ and the data $ \{ L w_j \} $ in \Cref{sec:POD_singular_values}.

% This distinction and the difference between the two are further discussed in \Cref{Section:basicresults}. 

%%------------------------------------------------------------------------------------------------
\subsection{A Brief Example}
%%------------------------------------------------------------------------------------------------

Next, we briefly present numerical results for an example to demonstrate our new results.  POD model order reduction is considered for this example in \cite{Wang15}; here, we focus on the POD data approximation errors.  The new results are discussed in greater detail for other examples in \Cref{Section:Examples}.

Consider a nerve impulse model, the FitzHugh-Nagumo system in one dimension. This model is given by 
%	\begin{align*}
%	\dfrac{\partial v(t,x)}{\partial t} &= \mu \dfrac{\partial^2 v(t,x)}{\partial x^2} - \dfrac{1}{\mu} w(t,x) + \dfrac{1}{\mu} f(v) + \dfrac{c}{\mu}, \quad 0<x<1, \\
%	\dfrac{\partial w(t,x)}{\partial t} &= b v(t,x) - \gamma w(t,x) + c, \quad 0<x<1,
%	\end{align*} 
%	where 
%	$$ f(v) = v(v-0.1)(1-v), $$
%	$\mu = 0.015$, $b = 0.5$, $\gamma = 2$, and $c = 0.05$. Further, the boundary conditions are given by 
%	$$ v_x(t,0) = -50000 t^3 e^{-15t} \quad \text{and} \quad v_x(t,1) = 0, $$
%	and the initial conditions are zero. For this example, we take the Hilbert spaces $X = Y = L^2(0,1) \times L^2(0,1)$ with the usual inner product, and define the operator $L: X \to Y$ by 
%	$$ L \begin{pmatrix}
%	v \\
%	w 
%	\end{pmatrix}
%	= \begin{pmatrix}
%	\partial_x v \\
%	\partial_x w 
%	\end{pmatrix}. $$
\begin{align*}
\dfrac{\partial u(t,x)}{\partial t} &= \mu \dfrac{\partial^2 u(t,x)}{\partial x^2} - \dfrac{1}{\mu} v(t,x) + \dfrac{1}{\mu} f(u) + \dfrac{c}{\mu}, \quad 0<x<1, \\
\dfrac{\partial v(t,x)}{\partial t} &= b u(t,x) - \gamma v(t,x) + c, \quad 0<x<1,
\end{align*} 
where 
$$ f(u) = u(u-0.1)(1-u), $$
$\mu = 0.015$, $b = 0.5$, $\gamma = 2$, and $c = 0.05$. Further, the boundary conditions are given by 
$$ u_x(t,0) = -50000 t^3 e^{-15t} \quad \text{and} \quad u_x(t,1) = 0, $$
and the initial conditions are zero.

For this example, we take the Hilbert spaces $X = Y = L^2(0,1) \times L^2(0,1)$ with the usual inner product, and define the operator $L: X \to Y$ by 
$$ L \begin{bmatrix}
u \\
v 
\end{bmatrix}
= \begin{bmatrix}
\partial_x u \\
\partial_x v 
\end{bmatrix}. $$
Note that here $L$ is unbounded and closed, but not invertible. Thus, this operator satisfies the main assumption made for the continuous case.  We let $ \Pi_r^Y $ be the orthogonal projection onto $ Y_r = \mathrm{span}\{ L \varphi_k \}_{k=1}^r $, where $ \{ \varphi_k \} \subset X $ are the POD modes.

To approximate the solution of the PDE we used the interpolated coefficient finite element method with continuous piecewise linear basis functions from \cite{Wang15}, and \texttt{ode23s} from MATLAB for the time stepping scheme.  We approximated the solution using 100 equally spaced finite element nodes on the time interval $ \mathcal{O} = (0,10) $.  Increasing the number of finite element nodes gave similar results below.

For the POD computations, the solution values were approximated at each time step, $w(t_k)$, where $ w = [u, v]^T $, and a piecewise constant function in time was formed. The constant on each interval is given by the average of the solution at the current step and the solution at the next step, i.e., $0.5(w(t_{k+1})+w(t_k))$. Note that for this problem we can calculate the POD eigenvalues, POD modes, and the data approximation errors exactly. Thus, comparisons between the actual approximation errors and the error formulas can be made. 

In \Cref{Table:Error with r = 4,Table:Error with r = 12} we present the errors from the relevant projections considered in this paper for $ r = 4 $ and $ r = 12 $. Note that errors for projections involving the inverse mapping $ L^{-1} $ are not included since $ L $ is not invertible for this example.  In the tables, the actual error is the integral error measure and the error formula is the sum involving the POD singular values.  The first line in the tables represents computations for the known error result \eqref{knownPODerror_cont}.  The second and third lines of the tables are computations for the new results \eqref{error1}-\eqref{error2}.  The second line of each table gives the values for
$$
\text{actual error } = \int_{\mathcal{O}} \|Lw(t)-L\Pi_r^X w(t) \|_Y^2 dt,  \quad  \text{error formula } = \sum_{k > r} \sigma_{k}^2 \|L\varphi_k\|^2_Y,
$$
while the third line of each table shows computational results for
$$
\text{actual error } = \int_{\mathcal{O}} \|L w_j -  \Pi _r^Y L w_j \|^2_{Y} dt,  \quad  \text{error formula } = \sum_{k > r} \sigma_{k}^2 \|L \varphi_k -  \Pi _r^Y L \varphi \|^2_Y.
$$
The differences in the computed values are likely due to round off errors.  Note that as $ r $ increases the errors tend toward zero, as expected by the theory.
%
%  In \Cref{Table:Error with r = 4}, we take $r = 4$ and 100 equally spaced finite element nodes.  Setting $ r = 12 $ causes the errors to tend toward zero, as expected by the theory; see \Cref{Table:Error with r = 12}.% where we take $r = 12$ and 100 equally spaced finite element nodes. 
%
\begin{table}
	
	\renewcommand{\arraystretch}{1.25}
	\caption{Error Comparison with $r = 4$\label{Table:Error with r = 4}}
	\begin{center}
		\begin{tabular}{|c||c|c|c|}
			%\begin{tabular}{cccc}
			\hline
			POD Error Equation & Actual Error & Error Formula & Difference\\
			\hline
			\Cref{knownPODerror_cont}  & $6.2755 \times 10^{-5}$ & $6.2792 \times 10^{-5}$ &$3.7584\times 10^{-8}$\\
			\Cref{error1}  & $2.1584 \times 10^{-1}$ & $2.1593 \times 10^{-1}$& $9.1863 \times 10^{-5}$\\
			\Cref{error2}  & $9.8536 \times 10^{-3}$ & $9.8541 \times 10^{-3}$ & $4.7712 \times 10^{-7}$\\
			\hline
		\end{tabular}
	\end{center}
\end{table} 

\begin{table}
	
	\renewcommand{\arraystretch}{1.25}
	\caption{Error Comparison with $r = 12$\label{Table:Error with r = 12}}
	\begin{center}
		\begin{tabular}{|c||c|c|c|}
			%\begin{tabular}{cccc}
			\hline
			POD Error Formula & Actual Error & Error Formula & Difference\\
			\hline
			\Cref{knownPODerror_cont} & $4.1453 \times 10^{-8}$ & $4.1487 \times 10^{-8}$ &$3.3661\times 10^{-11}$\\
			\Cref{error1} & $2.2536 \times 10^{-4}$ & $2.2541 \times 10^{-4}$& $5.2146 \times 10^{-8}$\\
			\Cref{error2} & $1.2664 \times 10^{-5}$ & $1.2668 \times 10^{-5}$ & $3.5150 \times 10^{-9}$\\
			\hline
		\end{tabular}
	\end{center}
\end{table}

%------------------------------------------------------------------------------------------------
\section{POD Properties}
%\section{Results on POD Operators, POD Projections, and POD Singular Values}
\label{Section:basicresults}
%------------------------------------------------------------------------------------------------

In this section, we consider three topics.  In \Cref{sec:HS_POD_operators}, we give two results concerning Hilbert-Schmidt operator norms of POD operators and approximations of POD operators.  These Hilbert-Schmidt results are used throughout \Cref{Section:errors} and \Cref{Section:convergence}.  In \Cref{sec:nonorth_POD_projections}, we study the boundedness of various non-orthogonal POD projections.  These boundedness results are used in \Cref{Section:convergence}.  In \Cref{sec:POD_singular_values}, we study the relationship between POD singular values and POD eigenvalues for different data.  This investigation is motivated by some results in \Cref{Section:convergence} where we assume the POD eigenvalues of different data are all nonzero.%  We do note that the results in \Cref{sec:POD_singular_values} are not directly used in this work.

%------------------------------------------------------------------------------------------------
\subsection{Hilbert-Schmidt Results for POD Operators}\label{sec:HS_POD_operators}
%------------------------------------------------------------------------------------------------

Below, we give two Hilbert-Schmidt results concerning POD operators.  The first result is known (see, e.e., \cite[Section 3.5]{Balakrishnan76}, \cite[Theorem 12.6.1]{Aubin00}, \cite[Lemma 4.4]{Singler11}), although perhaps not exactly in this precise form.  We provide a proof to be complete, and also since the result is crucial to this work.
%
% We will also make use of Lemma 4.4 from \cite{Singler11} about Hilbert Schmidt operators.% This result can also be seen in the appendix of this paper. 
%
\begin{lemma}\label{HS}
	Let $ Z $ be a separable Hilbert space, and let $ S = L^2(\mathcal{O};\mathbb{K}^m) $, where $ \mathcal{O} $ is an open subset of $ \mathbb{R}^d $.  If $K: S \to Z$ is defined by
	$$K f = \sum_{j=1}^{m} \int_\mathcal{O} \, f_j(t) z_j(t) \, dt,$$
	for $\{z_j\}_{j=1}^m \subset L^2(\mathcal{O};Z)$, then $K$ is Hilbert-Schmidt and 
	$$\|K\|^2_{\mathrm{HS}(S,Z)} = \sum_{j=1}^{m} \|z_j\|^2_{L^2(\mathcal{O};Z)}.$$
\end{lemma}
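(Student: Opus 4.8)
The plan is to compute the Hilbert–Schmidt norm directly from its definition using a well-chosen orthonormal basis of $S = L^2(\mathcal{O};\mathbb{K}^m)$, since the Hilbert–Schmidt norm is independent of the choice of basis. First I would fix an orthonormal basis $\{e_\ell\}_{\ell \geq 1}$ of $L^2(\mathcal{O})$ (which exists and is countable since $L^2(\mathcal{O})$ is separable), and then build an orthonormal basis of $S$ by taking, for each coordinate slot $i \in \{1,\ldots,m\}$, the functions $\phi_{i,\ell} \in S$ whose $i$-th component is $e_\ell$ and whose other components are zero. These $\{\phi_{i,\ell}\}$ form an orthonormal basis of $S$ indexed by the countable set $\{1,\ldots,m\} \times \mathbb{N}$.

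Next I would evaluate $K$ on these basis elements: for $f = \phi_{i,\ell}$ we have $f_j = \delta_{ij} e_\ell$, so $K\phi_{i,\ell} = \int_\mathcal{O} e_\ell(t) z_i(t)\, dt$. Hence
$$
\|K\|^2_{\mathrm{HS}(S,Z)} = \sum_{i=1}^m \sum_{\ell \geq 1} \left\| \int_\mathcal{O} e_\ell(t) z_i(t)\, dt \right\|_Z^2.
$$
For each fixed $i$, the inner sum over $\ell$ is exactly $\sum_{\ell} \|(z_i, \overline{e_\ell})_{?}\|^2$-type expansion of $z_i$ in the $L^2(\mathcal{O};Z)$ norm; more precisely, $\int_\mathcal{O} e_\ell(t) z_i(t)\, dt$ is the $\ell$-th generalized Fourier coefficient of $z_i \in L^2(\mathcal{O};Z)$ with respect to the basis $\{\overline{e_\ell}\}$, a $Z$-valued quantity. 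Parseval's identity for $Z$-valued $L^2$ functions then gives $\sum_{\ell \geq 1} \| \int_\mathcal{O} e_\ell(t) z_i(t)\, dt \|_Z^2 = \|z_i\|_{L^2(\mathcal{O};Z)}^2$. Summing over $i$ yields the claimed formula, and finiteness of the right-hand side (each $z_j \in L^2(\mathcal{O};Z)$) shows $K$ is Hilbert–Schmidt.

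The main technical point to be careful about is the Parseval step for vector-valued functions: one must justify that $\{\overline{e_\ell}\}$ (or $\{e_\ell\}$, depending on the conjugation convention in the inner product of $S$, which the paper takes linear in the first argument) still serves as an orthonormal basis and that the Bessel/Parseval identity transfers from scalar-valued to $Z$-valued $L^2$. This follows by expanding $z_i(t) = \sum_k (z_i(t), \zeta_k)_Z \zeta_k$ in an orthonormal basis $\{\zeta_k\}$ of $Z$, applying scalar Parseval in $L^2(\mathcal{O})$ coordinatewise to each scalar function $t \mapsto (z_i(t),\zeta_k)_Z$, and using Tonelli to interchange the sums over $k$ and $\ell$ — all terms being nonnegative. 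I would also double-check that the conjugation convention in $(u,v)_S$ is handled consistently so that the coefficients match; this is a bookkeeping matter rather than a genuine obstacle. The whole argument is short and the only "hard part" is stating the vector-valued Parseval identity cleanly.
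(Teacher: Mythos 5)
Your proposal is correct, and at its computational core it is the mirror image of the paper's argument. The paper computes $\|K^*\|^2_{\mathrm{HS}}$ by summing $\|K^*\xi_n\|_S^2$ over an orthonormal basis $\{\xi_n\}$ of $Z$ and then invokes Parseval in $L^2(\mathcal{O};Z)$ with respect to the tensor-product orthonormal basis $\{\chi_i\xi_n\}$ (citing that fact rather than proving it); you compute $\|K\|^2_{\mathrm{HS}}$ directly by summing over the coordinate-slot basis $\{\phi_{i,\ell}\}$ of $S$ and then establish the needed identity $\sum_{\ell}\bigl\|\int_\mathcal{O} e_\ell(t) z_i(t)\,dt\bigr\|_Z^2 = \|z_i\|^2_{L^2(\mathcal{O};Z)}$ from scratch. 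Unwinding your double expansion (over a basis of $Z$ and a basis of $L^2(\mathcal{O})$, with Tonelli) produces exactly the same nonnegative double sum $\sum_{k,\ell}\bigl|\int_\mathcal{O} e_\ell(t)(z_i(t),\zeta_k)_Z\,dt\bigr|^2$ that appears in the paper's chain of equalities, so the two proofs differ only in organization: yours avoids the adjoint and makes the vector-valued Parseval step self-contained, at the cost of a slightly longer justification. The conjugation bookkeeping is handled the same way in both (via the observation that $\{\overline{e_\ell}\}$ is again an orthonormal basis). The one small step worth making explicit in a final write-up is that $\bigl(\int_\mathcal{O} e_\ell(t) z_i(t)\,dt,\,\zeta_k\bigr)_Z = \int_\mathcal{O} e_\ell(t)\,(z_i(t),\zeta_k)_Z\,dt$, i.e., that the bounded functional $(\cdot,\zeta_k)_Z$ commutes with the Bochner integral; this is standard and is all you need to pass from your expansion of $z_i(t)$ to the coefficient identity, without having to interchange the infinite sum over $k$ with the integral directly.
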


\begin{proof}
	Let $ \{ \chi_{i} \}_{i \geq 1} \subset L^2(\mathcal{O}) $ and $ \{ \xi_n \}_{n \geq 1} \subset Z $ be orthonormal bases.  Therefore, $ \{ \overline{\chi}_{i} \}_{i \geq 1} $ is also an orthonormal basis for $ L^2(\mathcal{O}) $, and $ \{ \chi_{i} \xi_n \}_{i,n \geq 1} $ is an orthonormal basis for $ L^2(\mathcal{O};Z) $ (see, e.g., \cite[Theorem 12.6.1]{Aubin00}).
	
	For $ \xi \in Z $, let $ [K^* \xi]_j = (\xi,z_j(t))_Z $ denote the $ j $th component of $K^*\xi \in S $.  Working with the Hilbert adjoint operator $ K^* $ and using Parseval's equality gives
	\begin{align*}\allowdisplaybreaks
	\| K^* \|_{\mathrm{HS}(S,Z)}^2  &=  \sum_{n \geq 1} \| K^* \xi_n \|^2_S\\
	&=  \sum_{j = 1}^m  \sum_{n \geq 1} \big\| [K^* \xi_n]_j \big\|^2_{L^2(\mathcal{O})}\\
	&=  \sum_{j = 1}^m  \sum_{n,i \geq 1} \left| \big( \overline{\chi}_{i}, [K^* \xi_n]_j \big)_{L^2(\mathcal{O})} \right|^2\\
	&=  \sum_{j = 1}^m  \sum_{n,i \geq 1} \left| \int_{\mathcal{O}} \overline{\chi_{i}(t)} \, ( z_j(t), \xi_n )_Z \, dt \right|^2\\
	&=  \sum_{j = 1}^m  \sum_{n,i \geq 1} \left| \int_{\mathcal{O}} ( z_j(t), \chi_{i}(t) \xi_n )_Z \, dt \right|^2\\
	&=  \sum_{j = 1}^m  \sum_{n,i \geq 1} \left| ( z_j, \chi_{i} \xi_n )_{L^2(\mathcal{O};Z)} \right|^2\\
	&=  \sum_{j = 1}^m  \| z_j \|_{L^2(\mathcal{O};Z)}^2.
	\end{align*}
\end{proof}

The next result gives three different Hilbert-Schmidt norm approximation results involving the POD operator $ K $ for the data $ \{ w_j\} $ and the POD operator $ K^Y = L K $ for the data $ \{ L w_j\} $.  The result will be of particular usefulness when discussing the continuous case in \Cref{Section:ContError}, but it applies to the discrete case as well.  We also use this result throughout \Cref{Section:convergence}.%  Recall that in the continuous case the main assumption given in \Cref{contcase:mainassumption} always holds.
%
%In some cases it is easier to consider the Hilbert Schmidt norm errors and their convergence. \Cref{HSconv} gives these results. Recall that in the continuous case the main assumption given in \Cref{contcase:mainassumption} always holds.
%
\begin{lemma}\label{HSconv}  The Hilbert-Schmidt norm errors are given by
	\begin{equation}\label{HSnormLPiXK}
	\|LK-L \Pi _r^X K \|^2_{\mathrm{HS}(S,Y)} = \sum_{k>r} \sigma_k^2 \|L \varphi_k \|^2_Y,
	\end{equation}
	\begin{equation}\label{HSnormPiPsiLK}
	\|LK- \Pi _r^Y L K \|^2_{\mathrm{HS}(S,Y)} = \sum_{k>r} \sigma_k^2 \|L \varphi_k - \Pi _r^Y L \varphi_k \|^2_Y,
	\end{equation}
	and
	\begin{equation}\label{HSnormL^{-1}PiPsiLK}
	\| K - L^{-1} \Pi _r^Y L K \|^2_{\mathrm{HS}(S,X)} = \sum_{k>r} \sigma_k^2 \| \varphi_k - L^{-1} \Pi _r^Y L \varphi_k \|^2_X.
	\end{equation}
	In the case $s_X = \infty$, the following convergence results hold.  For \eqref{HSnormLPiXK}: The error tends to zero as $r \to \infty$.  For \eqref{HSnormPiPsiLK}: If $\{\Pi_r^Y\}$ is uniformly bounded in operator norm, then the error goes to zero as $ r \to \infty$. For \eqref{HSnormL^{-1}PiPsiLK}: If $L^{-1}$ is bounded and $\{\Pi_r^Y\}$ is uniformly bounded in operator norm, then the error tends to zero as $r \to \infty$.  For \eqref{HSnormL^{-1}PiPsiLK}: If $\{L^{-1} \Pi_r^Y L\}$ is uniformly bounded in operator norm, then the error converges to zero as $r \to \infty$. 
\end{lemma}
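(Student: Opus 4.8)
The plan is to reduce all three identities to the singular value decomposition $Kf=\sum_{k\ge1}\sigma_k(f,f_k)_S\varphi_k$ together with the identity $K^Y=LK$, and then evaluate the Hilbert--Schmidt norms against a convenient orthonormal basis of $S$. Two structural observations drive everything. First, since $\Pi_r^X$ is the orthogonal projection onto $X_r=\mathrm{span}\{\varphi_k\}_{k=1}^r$ and $(Kf,\varphi_j)_X=\sigma_j(f,f_j)_S$, we have $\Pi_r^X K=K_r$, the rank-$r$ truncated SVD; moreover $\sigma_r>0$ forces $\varphi_1,\dots,\varphi_r\in\mathcal D(L)$, so $L\Pi_r^X K=LK_r$ is the genuine finite sum $f\mapsto\sum_{k\le r}\sigma_k(f,f_k)_S L\varphi_k$. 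Second, extend $\{f_k:\sigma_k>0\}$ to an orthonormal basis of $S$ by appending an orthonormal basis $\{h_\ell\}$ of $\mathrm{ker}(K)$; by the main assumption and \Cref{HS} the operator $K^Y=LK$ is bounded (indeed Hilbert--Schmidt), and $K^Y f_k=\sigma_k L\varphi_k$ while $K^Y h_\ell=0$, so continuity gives $K^Y f=\sum_{k\ge1}\sigma_k(f,f_k)_S L\varphi_k$ in $Y$ for every $f\in S$.

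With these facts, each identity follows the same template, which I would present once for \eqref{HSnormPiPsiLK}. Since $I-\Pi_r^Y$ is bounded it may be moved past the convergent series, giving $(LK-\Pi_r^Y LK)f=\sum_{k\ge1}\sigma_k(f,f_k)_S\,(L\varphi_k-\Pi_r^Y L\varphi_k)$; as $L\varphi_k\in Y_r$ for $k\le r$, the first $r$ terms drop out. Now compute $\|LK-\Pi_r^Y LK\|_{\mathrm{HS}(S,Y)}^2$ on the extended basis: the vectors $h_\ell$ and the $f_k$ with $k\le r$ contribute $0$, and each $f_k$ with $k>r$ contributes $\sigma_k^2\|L\varphi_k-\Pi_r^Y L\varphi_k\|_Y^2$, which is \eqref{HSnormPiPsiLK}. (All operators in play are Hilbert--Schmidt — $K^Y$ by \Cref{HS}, the subtracted operators by having range in the finite-dimensional $Y_r$ — so the basis computation is legitimate.) For \eqref{HSnormLPiXK} the same computation applies with $LK_r$ in place of $\Pi_r^Y LK$, since $LK_r$ simply subtracts off the first $r$ terms of the expansion of $K^Y$. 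For \eqref{HSnormL^{-1}PiPsiLK} one first notes $\mathcal R(\Pi_r^Y LK)\subset Y_r\subset\mathcal R(L)=\mathcal D(L^{-1})$, so $L^{-1}\Pi_r^Y LK$ is defined; then $L^{-1}$ restricted to the finite-dimensional $Y_r$ is bounded, so $L^{-1}\Pi_r^Y$ may be pulled through the series, and $\varphi_k=L^{-1}L\varphi_k$ cancels the first $r$ terms, leaving $\sum_{k>r}\sigma_k^2\|\varphi_k-L^{-1}\Pi_r^Y L\varphi_k\|_X^2$.

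For the convergence statements I would invoke \Cref{HS} twice: it makes $K$ and $K^Y$ Hilbert--Schmidt, so $\sum_{k\ge1}\sigma_k^2=\|K\|_{\mathrm{HS}(S,X)}^2<\infty$ and $\sum_{k\ge1}\sigma_k^2\|L\varphi_k\|_Y^2=\|K^Y\|_{\mathrm{HS}(S,Y)}^2<\infty$, whence both tails $\sum_{k>r}\sigma_k^2$ and $\sum_{k>r}\sigma_k^2\|L\varphi_k\|_Y^2$ vanish as $r\to\infty$. Then \eqref{HSnormLPiXK} is exactly the second tail; if $\|\Pi_r^Y\|\le C$ uniformly, then $\|L\varphi_k-\Pi_r^Y L\varphi_k\|_Y\le(1+C)\|L\varphi_k\|_Y$ bounds the error in \eqref{HSnormPiPsiLK} by $(1+C)^2$ times the second tail; if in addition $L^{-1}$ is bounded, writing $\varphi_k-L^{-1}\Pi_r^Y L\varphi_k=L^{-1}(I-\Pi_r^Y)L\varphi_k$ bounds the error in \eqref{HSnormL^{-1}PiPsiLK} by $\|L^{-1}\|^2(1+C)^2$ times the second tail; and if instead $\{L^{-1}\Pi_r^Y L\}$ is uniformly bounded by $C'$ on $\mathcal D(L)$, then $\|\varphi_k-L^{-1}\Pi_r^Y L\varphi_k\|_X\le(1+C')\|\varphi_k\|_X=1+C'$ bounds the error by $(1+C')^2$ times the first tail.

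The only genuinely delicate point is that $L$ may be unbounded, so one cannot apply $L$ termwise to the SVD of $K$; the argument sidesteps this by working with the already-bounded operator $K^Y=LK$ (legitimate by the main assumption together with \Cref{HS}) and by moving through the infinite series only operators that are genuinely bounded, namely $I-\Pi_r^Y$ and the restriction of $L^{-1}$ to the finite-dimensional space $Y_r$. Once that is arranged, the remainder is bookkeeping with the extended orthonormal basis $\{f_k\}\cup\{h_\ell\}$.
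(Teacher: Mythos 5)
Your proof is correct and follows essentially the same route as the paper: both evaluate the Hilbert--Schmidt norm on the orthonormal basis of singular vectors of $K$ extended by a basis of $\mathrm{ker}(K)$ (the paper phrases this as eigenvectors of $K^*K$ indexed by $\mathbb{J}$), observe that the kernel vectors and the indices $k\le r$ contribute nothing, and obtain convergence from the Hilbert--Schmidt property of $K$ and $K^Y$ together with the stated uniform bounds. The only cosmetic difference is that you first assemble the full series representation of $K^Y$ before truncating, whereas the paper works termwise on basis vectors; the substance is identical.
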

\begin{proof}
	Let $\{f_k\}$ be an orthonormal basis of $S$ of eigenvectors of $K^*K$ and let $\mathbb{J} = \{k:f_k \notin \mathrm{ker} (K^*K) \}$. Note that $Kf_k = 0$ for all $k\notin \mathbb{J}$, since $\mathrm{ker}(K^*K) = \mathrm{ker}(K)$ by \Cref{lem1}. Also, $Kf_k = \sigma_k \varphi_k$ for all $k\in \mathbb{J}$. Then,
	\begin{align*}
	\|LK- L \Pi _r^X K\|^2_{\mathrm{HS}(S,Y)} &= \sum_{k\geq 1} \|(LK- L \Pi _r^X K)f_k\|^2_{Y} \\
	& = \sum_{k \in \mathbb{J}} \|(LK- L \Pi _r^X K)f_k\|^2_{Y} \\
	& = \sum_{k \in \mathbb{J}} \| L\sigma_k \varphi_k - L\Pi _r^X \sigma_k \varphi_k \|^2_{Y} \\
	& = \sum_{k>r, \, k \in \mathbb{J}} \sigma_k^2 \|L\varphi_k\|^2_{Y},
	\end{align*}
	where the last equality holds since $\Pi _r^X \varphi_k = \varphi_k $ for $k\leq r$ and $\Pi _r^X \varphi_k = 0 $ for $k > r$.
	Also, 
	$$\sum_{k>r, \, k \in \mathbb{J}} \sigma_k^2 \|L\varphi_k\|^2_Y = \sum_{k>r, \, k \in \mathbb{J}} \|LKf_k\|^2_{Y} = \sum_{k>r, \, k \in \mathbb{J}} \|K^Yf_k\|^2_{Y},$$
	which converges to zero as $r\to \infty$ since $K^Y$ is Hilbert-Schmidt. 
	
	Next, 
	\begin{align*}
	\|LK - \Pi _r^Y LK\|^2_{\mathrm{HS}(S,Y)} &= \sum_{k\geq 1} \| (LK - \Pi _r^Y LK)f_k\|^2_{Y} \\
	%& = \sum_{k\in \mathbb{J}} \| (LK - \Pi _r^Y LK)f_k\|^2_{Y} \\
	& = \sum_{k\in \mathbb{J}} \| L\sigma_k\varphi_k - \Pi _r^Y L\sigma_k\varphi_k\|^2_{Y} \\
	& = \sum_{k>r, \, k \in \mathbb{J}} \sigma_k^2 \|L\varphi_k-\Pi _r^Y L \varphi_k \|^2_{Y},
	\end{align*}
	where the last equality holds since $\Pi _r^Y L \varphi_k = L\varphi_k $ for $k\leq r$. For convergence, note
	\begin{align*}
	\|LK - \Pi _r^Y LK\|^2_{\mathrm{HS}(S,Y)} &= \sum_{k>r, \, k \in \mathbb{J}} \|LKf_k-\Pi _r^Y LKf_k\|^2_{Y} \\ &= \sum_{k>r, \, k \in \mathbb{J}} \|K^Yf_k-\Pi _r^Y K^Yf_k\|^2_{Y} \\ &\leq \sum_{k>r, \, k \in \mathbb{J}} \|I-\Pi _r^Y\|^2 \, \|K^Y f_k\|^2_Y.
	\end{align*} 
	Since $\|I-\Pi _r^Y\|$ is uniformly bounded and $K^Y$ is Hilbert-Schmidt, the error converges to zero as $r\to\infty$. 
	
	Similarly, for the last equality we have
	\begin{align*}
	\| K - L^{-1} \Pi _r^Y L K \|^2_{\mathrm{HS}(S,X)} & = \sum_{k\in \mathbb{J}}\|\sigma_k\varphi_k - L^{-1}\Pi _r^Y L\sigma_k\varphi_k\|^2_{X}\\
	& = \sum_{k>r, \, k \in \mathbb{J}} \sigma_k^2 \|\varphi_k - L^{-1}\Pi _r^Y L\varphi_k\|^2_{X},
	\end{align*}
	since $L^{-1}\Pi _r^Y L\varphi_k = L^{-1}L\varphi_k=\varphi_k$ for $k \leq r$.
	
	Assuming $L^{-1}$ is bounded and $\{ \Pi_r^Y \}$ is uniformly bounded, the convergence follows from 
	\begin{align*}
	\| K - L^{-1} \Pi _r^Y L K \|^2_{\mathrm{HS}(S,X)} & = \sum_{k>r, \, k \in \mathbb{J}} \| L^{-1}(I-\Pi _r^Y)L K f_k\|^2_{X} \\
	& \leq \sum_{k>r, \, k \in \mathbb{J}} \|L^{-1}\|^2 \,\|I-\Pi _r^Y\|^2 \, \|K^Y f_k\|^2_{Y}
	\end{align*}
	in a similar manner to the previous case.  For the second convergence case, we assume $\{ L^{-1} \Pi_r^Y L \}$ is uniformly bounded in operator norm and we have
	\begin{align*}
	\| K - L^{-1} \Pi _r^Y L K \|^2_{\mathrm{HS}(S,X)} & = \sum_{k>r, \, k \in \mathbb{J}} \| (I-L^{-1}\Pi _r^Y L) K f_k\|^2_{X} \\
	& \leq \sum_{k>r, \, k \in \mathbb{J}} \|I-L^{-1}\Pi _r^Y L\|^2 \, \|K^Y f_k\|^2_X,
	\end{align*}
	which converges to zero as $r\to \infty$.

\end{proof}

%------------------------------------------------------------------------------------------------
\subsection{Non-orthogonal POD Projections}\label{sec:nonorth_POD_projections}
%------------------------------------------------------------------------------------------------

In \Cref{Section:convergence}, we consider pointwise convergence results for the linear operators $L^{-1}\Pi_r^Y L:X \to X$ and $L \Pi_r^X L^{-1}:Y \to Y$.  Below, we give conditions that guarantee that these linear operators are bounded, or have bounded extensions, for $ r $ fixed.  We note that when these operators are bounded we have $L^{-1}\Pi_r^Y L:X \to X$ is a projection onto $ X_r = \mathrm{span}\{ \varphi \}_{j=1}^r $ and $L \Pi_r^X L^{-1}:Y \to Y$ is a projection onto $ Y_r = \mathrm{span}\{ L\varphi \}_{j=1}^r $.  Even if $ \Pi_r^Y $ is an orthogonal projection, these projections are typically non-orthogonal POD projection operators.

In the simplest case, if $ L $ and $ L^{-1} $ are bounded, then clearly $L^{-1}\Pi_r^Y L:X \to X$ and $L \Pi_r^X L^{-1}:Y \to Y$ are both bounded for each $ r $.  In this case, $ \{ L \Pi_r^X L^{-1} \} $ is uniformly bounded in operator norm, and $ \{ L^{-1}\Pi_r^Y L \} $ is also uniformly bounded when $ \{ \Pi_r^Y \} $ is uniformly bounded.

Below, we consider the case when either $ L $ or $ L^{-1} $ is unbounded.  For each fixed $ r $, we show $L^{-1}\Pi_r^Y L:X \to X$ is bounded when $ L $ is bounded, and $L \Pi_r^X L^{-1}:Y \to Y$ is bounded when $ L^{-1} $ is bounded.  In other cases, we need certain assumptions to be satisfied to construct bounded extensions of the operators for each $ r $.  We do \textit{not} show that these non-orthogonal POD projection operators are uniformly bounded in operator norm.%  Many researchers the uniform boundedness, or lack thereof, of these operators

%Boundedness of these projections plays an important role in convergence results. If $L$ or $L^{-1}$ are bounded then we can show $L^{-1}\Pi_r^Y L:X \to X$ or $L \Pi_r^X L^{-1}:Y \to Y$, respectively, are bounded. Otherwise, we will need to be able to extend them to bounded linear operators. Recall that we know that such an extension exists from \cite{Kreyszig1989}. 

In specific cases, we need certain adjoint operators to exist and therefore we need the operators to be densely defined or bounded.  For example, for the operator $L^{-*}$ to exist we must assume that $\mathcal{D}(L^{-1})$ is dense in $Y$, or $L^{-1}$ is bounded. These type of assumptions must be added to the second and fourth parts of the following theorem, in addition to results later in this paper.
\begin{theorem}\label{boundedext} 
	\begin{enumerate} Assume $L$ is invertible and $r>0$ is fixed. 
		\item If $L^{-1}$ is bounded, then $L\Pi_r^X L^{-1}:Y \to Y$ is bounded. 
		
		\item If $\mathcal{D}(L^{-1})$ is dense in $Y$ and $\{\varphi_k\}_{k=1}^r \subset \mathcal{D}(L^{-*})$, the operator $L\Pi_r^X L^{-1}: Y \to Y$ can be extended to a bounded operator on $Y$.
		
		\item  If $L$ is bounded, then $L^{-1}\Pi^Y_r L:X \to X $ is bounded. 
		
		\item Assume $ \Pi_r^Y : Y \to Y $ is the orthogonal projection onto $ \text{span}\{ L \varphi_k \}_{k=1}^r $.  If $L^{-1}$ is bounded, $\mathcal{D}(L)$ is dense, and $\{ L \varphi_k \}_{k=1}^r \subset \mathcal{D}(L^*)$, 
		%Let $\Pi_r^Y$ be an orthogonal projection. Assume $\mathcal{D}(L)$ is dense in $X$ and $\{ \varphi_k\} \subset \mathcal{D}(L^{-*})$.  
		then $L^{-1}\Pi^Y_r L:X \to X $ can be extended to a bounded operator on $X$. 
		%Also, we need $Lw_j \in \mathcal{D}(L^*)$ (a.e.) and $L^*Lw_j \in L^2(S;\mathcal{O})$ for all $j$ to get $L\varphi_j \in \mathcal{D}(L^*)$. 
	\end{enumerate}
	
\end{theorem}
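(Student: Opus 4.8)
The plan is to split the four parts into two everywhere-defined cases (parts 1 and 3) and two genuine extension cases (parts 2 and 4). The common thread is that each operator in question has finite-dimensional range --- contained in $Y_r$ for $L\Pi_r^X L^{-1}$ and in $X_r$ for $L^{-1}\Pi_r^Y L$, both finite dimensional because $\sigma_r>0$ forces $\{\varphi_k\}_{k=1}^r\subset\mathcal{D}(L)$ by the main assumption --- so each such operator is a finite sum of rank-one maps, and the whole question reduces to controlling the scalar coefficient functionals. In particular $L|_{X_r}:X_r\to Y$ and $L^{-1}|_{Y_r}:Y_r\to X$ are automatically bounded, being linear maps on finite-dimensional spaces, and $\Pi_r^X$ (orthogonal, norm $1$) and $\Pi_r^Y$ (a projection, hence bounded) are bounded.

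For part 1 I would write $L\Pi_r^X L^{-1}=(L|_{X_r})\,\Pi_r^X\,L^{-1}$ as a composition of the bounded operators $L^{-1}:Y\to X$, $\Pi_r^X:X\to X_r$, and $L|_{X_r}:X_r\to Y$; the representation $L\Pi_r^X L^{-1}y=\sum_{k=1}^r (L^{-1}y,\varphi_k)_X L\varphi_k$ even yields the explicit bound $\|L\Pi_r^X L^{-1}\|\le\|L^{-1}\|\sum_{k=1}^r\|L\varphi_k\|_Y$. Part 3 is symmetric: since $L$ is bounded, $\Pi_r^Y$ is a bounded projection onto $Y_r$, and $L^{-1}$ maps $Y_r$ bijectively onto $X_r$ (with $Y_r\subset\mathcal{R}(L)=\mathcal{D}(L^{-1})$, so there is no domain issue), the operator $L^{-1}\Pi_r^Y L=(L^{-1}|_{Y_r})\,\Pi_r^Y\,L$ is a composition of bounded maps $X\to Y\to Y_r\to X$.

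For parts 2 and 4 the outer operator ($L^{-1}$ in part 2, $L$ in part 4) is only densely defined, so $L\Pi_r^X L^{-1}$ and $L^{-1}\Pi_r^Y L$ are a priori only densely defined; the strategy is to exhibit a bounded everywhere-defined operator agreeing with the given one on its dense domain, which is then its unique bounded extension. For part 2, for $y$ in the dense set $\mathcal{D}(L^{-1})$ I would compute $L\Pi_r^X L^{-1}y=\sum_{k=1}^r(L^{-1}y,\varphi_k)_X L\varphi_k$ and then invoke the adjoint identity $(L^{-1}y,\varphi_k)_X=(y,L^{-*}\varphi_k)_Y$, which is legitimate precisely because $\mathcal{D}(L^{-1})$ is dense (so $L^{-*}$ exists) and $\{\varphi_k\}_{k=1}^r\subset\mathcal{D}(L^{-*})$. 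The map $y\mapsto\sum_{k=1}^r(y,L^{-*}\varphi_k)_Y L\varphi_k$ is defined and bounded on all of $Y$, with norm at most $\sum_{k=1}^r\|L^{-*}\varphi_k\|_Y\|L\varphi_k\|_Y$, and coincides with $L\Pi_r^X L^{-1}$ on $\mathcal{D}(L^{-1})$.

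Part 4 is the step I expect to be the main obstacle, because there $\Pi_r^Y$ is the orthogonal projection onto $Y_r=\mathrm{span}\{L\varphi_k\}_{k=1}^r$ while $\{L\varphi_k\}$ need not be orthonormal. I would first record the Gram-matrix representation: with $G=\big[(L\varphi_k,L\varphi_j)_Y\big]_{j,k=1}^r$, which is invertible since $\{L\varphi_k\}$ is linearly independent ($\{\varphi_k\}$ orthonormal and $L$ injective), one has $\Pi_r^Y z=\sum_{k=1}^r\big(\sum_{j=1}^r(G^{-1})_{kj}(z,L\varphi_j)_Y\big)L\varphi_k$ for all $z\in Y$. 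Then for $x$ in the dense set $\mathcal{D}(L)$, taking $z=Lx$, using $\{L\varphi_j\}_{j=1}^r\subset\mathcal{D}(L^*)$ to rewrite $(Lx,L\varphi_j)_Y=(x,L^*L\varphi_j)_X$, and $L^{-1}(L\varphi_k)=\varphi_k$, gives $L^{-1}\Pi_r^Y Lx=\sum_{k=1}^r\big(\sum_{j=1}^r(G^{-1})_{kj}(x,L^*L\varphi_j)_X\big)\varphi_k$; the right-hand side defines a bounded everywhere-defined operator on $X$, and density of $\mathcal{D}(L)$ identifies it as the desired extension. Beyond this, the work is routine domain bookkeeping, making sure each adjoint identity is applied only where both arguments lie in the relevant domain.
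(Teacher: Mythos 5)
Your proposal is correct. Parts 1 and 2 coincide with the paper's proof: the same representation $L\Pi_r^X L^{-1}y=\sum_{k=1}^r(L^{-1}y,\varphi_k)_X L\varphi_k$ with the same explicit constant in part 1, and the same rewriting via $L^{-*}$ on the dense domain $\mathcal{D}(L^{-1})$ in part 2. For part 3 the paper proves by hand what you invoke abstractly: it expands $\Pi_r^Y y=\sum_j\alpha_j(y)L\varphi_j$, uses positive definiteness of the Gram matrix $[(L\varphi_i,L\varphi_j)_Y]$ to get $\|\alpha(y)\|_{\mathbb{K}^r}\le\beta^{-1/2}\|\Pi_r^Y\|\,\|y\|_Y$, and then uses orthonormality of $\{\varphi_j\}$ in $X$; your observation that $L^{-1}|_{Y_r}$ is automatically bounded because $Y_r$ is finite dimensional and contained in $\mathcal{R}(L)$ packages exactly that computation, so the content is the same but your route is shorter. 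The genuine divergence is part 4. The paper exploits the biorthogonality $(L\varphi_k,L^{-*}\varphi_j)_Y=\delta_{kj}$ together with self-adjointness of the orthogonal projection $\Pi_r^Y$ to obtain $\Pi_r^Y y=\sum_k(y,\Pi_r^Y L^{-*}\varphi_k)_Y\,L\varphi_k$ and hence the extension $x\mapsto\sum_k(x,L^*\Pi_r^Y L^{-*}\varphi_k)_X\,\varphi_k$; this is precisely where the hypothesis that $L^{-1}$ is bounded enters, since it guarantees $L^{-*}$ exists everywhere. You instead use the normal-equations (Gram-matrix) formula for the orthogonal projection and move only $L^*$ across the inner product, arriving at $x\mapsto\sum_k\big(\sum_j(G^{-1})_{kj}(x,L^*L\varphi_j)_X\big)\varphi_k$. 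Both are bounded finite-rank operators agreeing with $L^{-1}\Pi_r^Y L$ on the dense set $\mathcal{D}(L)$, so both are valid; the paper's version buys a tidier closed form in terms of a dual basis, while yours avoids $L^{-*}$ entirely and thus never uses the boundedness of $L^{-1}$, showing that hypothesis in item 4 can in fact be relaxed to mere invertibility of $L$.
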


\begin{remark}
	In the second and fourth items, we assume the POD modes satisfy the regularity properties $\{\varphi_k\}_{k=1}^r \subset \mathcal{D}(L^{-*})$ and $\{ L \varphi_k \}_{k=1}^r \subset \mathcal{D}(L^*)$, respectively.  See \Cref{Section:mainassumptions} for conditions on the data in the discrete and continuous cases that guarantee these properties hold.
	%
	%In parts 3 and 4 the assumption, $\{\varphi_k\} \subset \mathcal{D}(L^{-*})$, allows us to define the extension. This assumption can be guaranteed if $\{w_k\} \subset \mathcal{D}(L^{-*})$ or $\{w_k(t)\} \subset {D}(L^{-*})$ for almost every $t \in S$ and $\{L^{-*}w_k\} \subset L^2(S;Y)$ as discussed in the appropriate subsections of \Cref{Section:mainassumptions}. In part 4, note that since $L^{-1}$ is bounded, we know $L^{-*}$ is also bounded and thus defined everywhere. This guarantees $\{\varphi_k\} \subset \mathcal{D}(L^{-*})$, so it does not have to be included as a separate assumption.
\end{remark}

\begin{proof} 
	\begin{enumerate}
		%
		% Part 1
		%
		\item Note
		\begin{equation}\label{above}
		L \Pi_r^X L^{-1} y = \sum_{k=1}^{r} (L^{-1}y,\varphi_k)_X L \varphi_k.
		\end{equation}
		Since $L^{-1}$ is a bounded operator and $\varphi_k \in \mathcal{D}(L)$ for all $k$, the sum in \eqref{above} is well defined for all $ y \in Y $. Also, it can be checked that
		\begin{equation}\label{above_bounded}
		\|L \Pi_r^X L^{-1} y \|_Y  \leq  c \|y\|_Y,
		\end{equation}
		%
		%	\begin{align*}
		%	\|L \Pi_r^X L^{-1} y \|_Y &= \| \sum_{k=1}^{r} (L^{-1}y,\varphi_k)_X L \varphi_k \|_Y \\
		%	& \leq \sum_{k=1}^{r}|(L^{-1}y,\varphi_k)_X| \|L\varphi_k\|_Y \\
		%	& \leq \left( \sum_{k=1}^{r}|(L^{-1}y,\varphi_k)_X|^2 \right)^{1/2} \left( \sum_{k=1}^{r} \|L\varphi_k\|^2 \right)^{1/2} \\
		%	& \leq c \left( \|L^{-1}y\|^2 \right)^{1/2} \\
		%	& \leq c \|L^{-1}\| \|y\|
		%	\end{align*}
		%
		where the constant $c := \|L^{-1}\| \left( \sum_{k=1}^{r} \|L\varphi_k\|_Y^2 \right)^{1/2}$ depends on $r$. This shows that the operator $L \Pi_r^X L^{-1}$ is bounded when $L^{-1}$ is bounded. 
		
		%
		% Part 2
		%
		\item  The linear operator $ L \Pi_r^X L^{-1} : Y \to Y $ is defined by \eqref{above} for all $ y \in \mathcal{D}(L^{-1}) $.  Using the assumptions, we can rewrite \eqref{above} for $ y \in \mathcal{D}(L^{-1}) $ as
		\begin{equation}\label{above2}
		L \Pi_r^X L^{-1} y = \sum_{k=1}^{r} (y,L^{-*}\varphi_k)_X L \varphi_k.
		\end{equation}
		It can be checked that \eqref{above_bounded} holds for all $ y \in \mathcal{D}(L^{-1}) $ with $ c := \sum_{k=1}^r \| L^{-*}\varphi_k \|_X \| L \varphi_k \|_Y $.  Note that \eqref{above2} is well-defined for all $ y \in Y $, and therefore yields a bounded linear extension of $ L \Pi_r^X L^{-1} : Y \to Y $ to all of $ Y $.  
		
		% Define $B = L\Pi_r^X L^{-1}$ then $By = \sum (L^{-1}y, \varphi_k)_X L\varphi_k$.  Define $\tilde{B}y = \sum (y, L^{-*}\varphi_k)_Y L \varphi_k$, for all $y \in Y$. Note for all $y \in \mathcal{R}(L)$, $By = \tilde{B}y$.   Therefore $\tilde{B}$ is a bounded linear extension of $B$. 
		
		%
		% Part 3
		%	
		\item\label{boundedext_proof_part3} Since $\Pi_r^Y$ is a projection onto $ Y_r = \mathrm{span} \{ L\varphi \}_{j=1}^r $, we know for $y \in Y$ there exists constants $\{\alpha_j(y)\}$ depending on $ y $ such that $\Pi_r^Y y = \sum_{j=1}^{r} \alpha_j(y) L\varphi_j$. Then
		\begin{equation}\label{Pi_r^Psi bounded}
		\bigg\|\sum_{j=1}^{r} \alpha_j(y) L\varphi_j \bigg\|_Y = \|\Pi_r^Y y \|_Y \leq \|\Pi_r^Y\| \|y\|_Y.
		\end{equation}
		Also, 
		$$\bigg\| \sum_{j=1}^{r} \alpha_j(y) L\varphi_j \bigg\|_Y^2 = \sum_{j,k = 1}^r \alpha_j(y) (L\varphi_j,L\varphi_k)_Y \overline{\alpha_k(y)} = \alpha(y)^* A_r \alpha(y),$$
		where the star denotes complex conjugate, and
		$$\alpha(y) = [\alpha_1(y),...,\alpha_r(y)]^T\in \mathbb{K}^r,  \quad  [A_r]_{i,j} = (L\varphi_i,L\varphi_j)_Y.$$
		Since $ L $ is invertible and $ \{ \varphi_j \}_{j=1}^r $ is a linearly independent set, we know $\{L\varphi_j\}_{j=1}^r$ is a linearly independent set; therefore, $A_r$ is symmetric positive definite, which implies there exists $\beta >0$ such that $\alpha^* A_r \alpha \geq \beta \|\alpha\|_{\mathbb{K}^r}^2$ for all $ \alpha \in \mathbb{K}^r $. Note that $\beta$ may depend on $r$. Together, the above implies that
		\begin{equation*}
		\beta \|\alpha(y)\|_{\mathbb{K}^r}^2 \leq \bigg\| \sum_{j=1}^{r} \alpha_j(y) L\varphi_j \bigg\|_Y^2 
		\leq \|\Pi_r^Y\|^2 \|y\|_Y^2.
		\end{equation*}
		So, 
		\begin{equation}
		\|\alpha(y)\|_{\mathbb{K}^r} \leq \beta^{-1/2} \|\Pi_r^Y\| \|y\|_Y.
		\end{equation}
		
		In this case, $y = Lx$ and $L$ is bounded and invertible; thus,
		$$ L^{-1} \Pi_r^Y (Lx) = L^{-1} \sum_{j = 1}^r \alpha_j(Lx) L\varphi_j = \sum_{j = 1}^r \alpha_j(Lx) \varphi_j$$
		where the constants $\alpha_j$ now depend on $Lx$. Since $\{\varphi_j\} \subset X$ is orthonormal, we have
		\begin{align*}
		\|L^{-1} \Pi_r^Y Lx\|_X^2 &= \bigg\|\sum_{j = 1}^r \alpha_j(Lx) \varphi_j \bigg\|_X^2 \\
		%& = \sum_{j = 1} |\alpha_j(Lx)|^2 \\
		& = \|\alpha(Lx)\|_{\mathbb{K}^r}^2 \\
		& \leq \beta^{-1} \|\Pi_r^Y\|^2 \|Lx\|_Y^2\\
		& \leq \beta^{-1} \|\Pi_r^Y\|^2 \|L\|^2 \|x\|_X^2.
		\end{align*}
		Therefore, for all $ x \in X $ we have
		\begin{equation}\label{bounded_part3}
		\|L^{-1} \Pi_r^Y L x\|_X \leq  c \| x \|_X,
		\end{equation}
		where $ c := \beta^{-1/2} \|\Pi_r^Y\| \|L\| $.
		
		%
		% Part 4
		%
		\item We obtain a representation of $L^{-1} \Pi_r^Y L $ as follows.  First, note that the sets $\{L\varphi_k\}$ and $\{ L^{-*} \varphi_k \} $ are biorthogonal, i.e., $(L\varphi_k, L^{-*}\varphi_j)_Y = \delta_{k,j}$, where $ \delta_{k,j}$ is the Kronecker delta symbol. Recall from the proof of part \ref{boundedext_proof_part3} that $\Pi_r^Y y = \sum_{k=1}^{r} \alpha_k L\varphi_k$ for some scalars $\alpha_k $ that depend on $ y $. We can calculate the values for $\alpha_k$ by noting
		$$\left( \Pi_r^Y y, L^{-*}\varphi_j \right)_Y = \sum_{k=1}^{r} \alpha_k(L\varphi_k, L^{-*}\varphi_j)_Y = \alpha_j.$$
		This yields
		\begin{equation}\label{psiprojectionequ}
		\Pi_r^Y y = \sum_{k=1}^{r} ( \Pi_r^Y y, L^{-*}\varphi_k)_Y L\varphi_k  = \sum_{k=1}^{r} (y, \Pi_r^Y L^{-*}\varphi_k)_Y L\varphi_k,
		\end{equation}
		since $\Pi_r^Y$ is orthogonal and therefore $(\Pi_r^Y)^* = \Pi_r^Y$.
		
		By assumption, $ \{ L \varphi_j \} \subset \mathcal{D}(L^*) $ and so \eqref{psiprojectionequ} implies $ \Pi_r^Y y \in \mathcal{D}(L^*) $ for all $ y \in Y $.  This gives the following representation for any $ x \in \mathcal{D}(L) $:
		\begin{equation}\label{extension_part4}
		L^{-1} \Pi_r^Y Lx = \sum_{k=1}^{r} (x, L^* \Pi_r^Y L^{-*} \varphi_k)_X  \varphi_k.
		\end{equation}
		Also, for all $ x \in \mathcal{D}(L) $, the bound \eqref{bounded_part3} holds with $ c := \left( \sum_{k=1}^{r} \| L^* \Pi_r^Y L^{-*} \varphi_k \|_X^2 \right)^{1/2} $.  Equation \eqref{extension_part4} is well-defined for all $ x \in X $, and therefore defines a bounded linear extension of $ L^{-1} \Pi_r^Y L : X \to X $ to all of $ X $.
		%	
		%	This also ensures that $L\varphi_k \in \mathcal{D}((\Pi_r^Y)^*) = \mathcal{D}(\Pi_r^Y)$ for all $k$.
		%	We then get a representation for the projection $L^{-1} \Pi_r^Y L: \mathcal{D}(L)\subset X \to X$ as 
		%	Now define $Bx = L^{-1}\Pi^Y_r Lx = \sum_{k=1}^{r} (Lx,\Pi_r^Y L^{-*}\varphi_k)_Y \varphi_k$ and $\tilde{B}x =  \sum_{k=1}^{r} (x,L^{*}\Pi_r^Y L^{-*}\varphi_k)_Y \varphi_k$. Then the proof is similar to the previous case. 
	\end{enumerate}
\end{proof}

%In the above theorem, we make the assumption that $\varphi_k \in \mathcal{D}(L^{-*})$. There are simple conditions that ensure this assumption is satisfied. First, we know $\varphi_k \in \mathcal{D}(L^{-*})$ is automatically satisfied if $L^{-1}$ is bounded. Otherwise, for $\sigma_k >0$, if $w_k \in \mathcal{D}(L^{-*})$ for the discrete case, or $w(t) \in \mathcal{D}(L^{-*})$ almost everywhere for the continuous case, then  $\varphi_k \in \mathcal{D}(L^{-*})$. 

%------------------------------------------------------------------------------------------------
\subsection{POD Singular Values and POD Eigenvalues}\label{sec:POD_singular_values}
%------------------------------------------------------------------------------------------------

%In this section we present some basic definitions and results that apply to both cases. Properties of POD operators and their projections, including results about POD eigenvalues are also in this section.

The number of nonzero singular values (or eigenvalues) of the POD operators plays an important role throughout the paper. It is also important to note the difference between singular values and eigenvalues. For a POD operator $K:S\to Z$, recall the POD eigenvalues are the eigenvalues of $KK^*:Z\to Z$, the POD singular values are the singular values of $K$, and $ s_Z = \mathrm{rank}(K) $, i.e., $ s_Z $ is the number of positive POD singular values of $ K $ (or positive POD eigenvalues of $ K K^* $).  As discussed in \Cref{Section:basic}, it is possible to have a zero POD singular value but to have all nonzero POD eigenvalues.

Below, we study various relationships between the POD eigenvalues and POD singular values for the data $\{w_j\}$ and the data $\{Lw_j\}$.  Recall, $ K : S \to X $ is the POD operator for the data $\{w_j\}$, and $ K^Y = LK : S \to Y $ is the POD operator for the data $ \{ L w_j \} $.  Therefore, $ s_X = \mathrm{rank}(K) $ is the number of nonzero POD singular values (or POD eigenvalues) for the data $\{w_j\}$, and $ s_Y = \mathrm{rank}(K^Y) $ is the number of nonzero POD singular values (or POD eigenvalues) for the data $\{ Lw_j\}$

First, we give a relationship between the POD eigenvalues and the null space of the adjoint POD operator, and also give some additional information about $ s_X $ and $ s_Y $.% There is also a relationship between the number of POD eigenvalues under the linear mapping $L$. 
\begin{lemma}\label{prob4}
	\begin{enumerate}
		\item\label{prob4_part1} All of the POD eigenvalues for the data $\{w_j\} $ are nonzero if and only if $\mathrm{ker}(K^*) = \{0\} $.  In this case, $ X = \overline{ \mathcal{R}(K) } $.  In addition, if $s_X<\infty$, then $ X = \mathcal{R}(K) $ and $ \dim(X) = s_X $.% Further, if $S = \mathbb{K}^s$, then  $ s_X \leq s $.
		
		\item\label{prob4_part2} All of the POD eigenvalues for the data $ \{ L w_j \} $ are nonzero if and only if $ \mathrm{ker}((K^Y)^*) = \{0\} $.  In this case,  $ Y = \overline{ \mathcal{R}(K^Y) } $.  In addition, if $s_Y<\infty$, then $ Y = \mathcal{R}(K^Y) $ and $ \dim(Y) = s_Y $.% Further if $S = \mathbb{K}^s$,then $ s_Y \leq s $.
		
		\item The number of nonzero POD eigenvalues for $\{Lw_j\}$ is less than or equal to the number of nonzero POD eigenvalues for $\{w_j\}$. That is, $s_Y \leq s_X$.
		
		\item If $L$ is invertible, then $s_X =s_Y$. 
	\end{enumerate}
\end{lemma}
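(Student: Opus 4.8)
The plan is to derive all four items from \Cref{lem1} together with the standard Hilbert space identity $\mathrm{ker}(T^*) = \mathcal{R}(T)^\perp$ for a bounded linear operator $T$, and from elementary facts about ranges of linear maps. Note at the outset that $K$ and $K^Y = LK$ are both compact (hence bounded), so their Hilbert adjoints exist and \Cref{lem1} applies to each, and $\mathcal{R}(K) \subseteq \mathcal{D}(L)$ by the main assumption, so $K^Y$ is well defined.

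For the first claim, observe that the POD eigenvalues for $\{w_j\}$ are by definition the eigenvalues of $KK^*$, so ``all POD eigenvalues are nonzero'' is precisely the statement $\mathrm{ker}(KK^*) = \{0\}$; by \Cref{lem1} this equals $\mathrm{ker}(K^*) = \{0\}$. Since $\mathrm{ker}(K^*) = \mathcal{R}(K)^\perp$, this is equivalent to $\overline{\mathcal{R}(K)} = X$. If in addition $s_X = \mathrm{rank}(K) < \infty$, then $\mathcal{R}(K)$ is finite dimensional, hence closed, so $\mathcal{R}(K) = \overline{\mathcal{R}(K)} = X$ and $\dim(X) = \mathrm{rank}(K) = s_X$. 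The second claim is the same argument verbatim with $K^Y = LK$ in place of $K$: $\mathrm{ker}(K^Y (K^Y)^*) = \mathrm{ker}((K^Y)^*) = \mathcal{R}(K^Y)^\perp$, and finite rank again forces $\mathcal{R}(K^Y)$ closed.

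For the third and fourth claims I would argue directly with ranges. Since $\mathcal{R}(K) \subseteq \mathcal{D}(L)$ we have $\mathcal{R}(K^Y) = \mathcal{R}(LK) = L(\mathcal{R}(K))$, and a linear map cannot increase dimension, so $s_Y = \dim L(\mathcal{R}(K)) \le \dim \mathcal{R}(K) = s_X$ (trivially so when $s_X = \infty$). If moreover $L$ is invertible, then $L$ is injective on $\mathcal{R}(K)$, so it sends any linearly independent subset of $\mathcal{R}(K)$ to a linearly independent subset of $\mathcal{R}(K^Y)$; hence $\dim \mathcal{R}(K^Y) = \dim \mathcal{R}(K)$, i.e.\ $s_Y = s_X$, both possibly infinite.

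There is no substantive obstacle here; the only care needed is bookkeeping — confirming that the main assumption indeed makes $K^Y = LK$ a genuine compact operator so that \Cref{lem1} and the identity $\mathrm{ker}(T^*) = \mathcal{R}(T)^\perp$ apply, and phrasing the dimension comparisons in the third and fourth claims so that they remain correct in the infinite-rank case.
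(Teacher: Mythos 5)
Your proposal is correct and follows essentially the same route as the paper: items 1 and 2 via \Cref{lem1} together with the decomposition $X = \mathrm{ker}(K^*) \oplus \overline{\mathcal{R}(K)}$ (equivalently $\mathrm{ker}(K^*)=\mathcal{R}(K)^\perp$) and the closedness of finite-dimensional ranges, and items 3 and 4 by comparing $\mathrm{rank}(K)$ with $\mathrm{rank}(LK)$. The only (harmless) variation is in item 4, where the paper writes $K = L^{-1}K^Y$ using the SVD of $K^Y$ and checks $\varphi_j^Y \in \mathcal{D}(L^{-1})$, whereas you argue directly that the injectivity of $L$ on $\mathcal{R}(K)\subset\mathcal{D}(L)$ preserves linear independence, which is slightly more economical and handles the infinite-rank case without a case split.
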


\begin{proof}
	The first two items are proven similarly. Here we show item 1. 	
	\begin{enumerate}
		\item 
		
		\Cref{lem1} proves the first statement.  To see the rest, note that $X = \mathrm{ker}(K^*) \oplus \overline{ \mathcal{R}(K) }$ and $\mathrm{ker}(K^*) = \{0\} $ imply $ X = \overline{ \mathcal{R}(K) } $.  Then if $ s_X = \mathrm{rank}(K) = \dim ( \mathcal{R}(K) )$ is finite, we have $\overline{ \mathcal{R}(K) } =  \mathcal{R}(K)$ and therefore $X = \mathcal{R}(K) $ and $ \dim(X) = s_X $.
		
		%So we have $\dim(X) = \dim( \mathcal{R}(K) )$. 
		
		%Since $Kg = \sum_{k=1}^{s_X} \sigma_k (g,f_k)_{S} \varphi_k$,we know $\dim( \mathcal{R}(K) ) = s_X$. This shows $\dim(X) = s_X$. If $S = \mathbb{K}^s$, $K^*K$ has at most $s$ nonzero eigenvalues. Then $KK^*$ has at most $s$ nonzero eigenvalues. Thus, $s_X \leq s$.
		
		\item[3.] First, if $s_X = \infty$, we are done. Assume $s_X < \infty$. We know
		$$ K f = \sum_{j=1}^{s_X} \sigma_j (f,f_j)_S \varphi_j, $$
		and therefore
		%$$ (K^Y)^* y = \sum_{j=1}^{s_X} \sigma_j (y,L \varphi_j)_Y f_j. $$
		$$
		K^Y f = LK f = \sum_{j=1}^{s_X} \sigma_j (f,f_j)_S L \varphi_j.
		$$
		Thus, $ s_Y = \mathrm{rank}(K^Y) \leq s_X $.
		%
		%Thus 
		%$$ K^Y (K^Y)^* y = \sum_{k=1}^{s_X} \sigma_{k} \left( \sum_{j=1}^{s_X} \sigma_j (y,L\varphi_j)_Y f_j,f_k\right)_S L\varphi_k = \sum_{k=1}^{s_X} \sigma_{k}^2 (y,L\varphi_k)_Y %L \varphi_k.$$
		%Thus, the rank of $K^Y (K^Y)^*\leq s_X$. Since the number of nonzero eigenvalues cannot exceed the rank we have $s_Y\leq s_X$.
		
		\item[4.] Because of item 3, we need only show $ s_X \leq s_Y $.  First, if $s_Y = \infty$, we are done. Assume $s_Y < \infty$. Let the singular value decomposition of $ K^Y $ be given by
		$$K^Y f = LKf = \sum_{j=1}^{s_Y} \sigma_j^Y (f,f_j^Y)_S \varphi_j^Y. $$
		Note that $\varphi_j^Y \in \mathcal{D}(L^{-1})$ whenever $\sigma_j^Y > 0$, since $\mathcal{D}(L^{-1})
		= R(L)$ and $\varphi_j^Y = (\sigma_{j}^Y)^{-1}K^Yf_j^Y = (\sigma_{j}^Y)^{-1}LKf_j^Y$. Then, since $L$ is invertible,
		$$Kf = L^{-1}LKf = L^{-1}K^Yf = \sum_{j=1}^{s_Y} \sigma_j^Y (f,f_j^Y)_S L^{-1}\varphi_j^Y, $$
		and therefore $ s_X = \mathrm{rank}(K) \leq s_Y $.
		%
		%$$K^*x = \sum_{j=1}^{s_Y} \sigma_j^Y (x,L^{-1}\varphi_j^Y)_Xf_j^Y .$$
		%Thus
		%$$ K K^* x = \sum_{k=1}^{s_Y} \sigma_{k} \left( \sum_{j=1}^{s_Y} \sigma_j^Y (x,L^{-1}\varphi_j^Y)_X f_j^Y,f_k^Y\right)_S L^{-1}\varphi_k^Y = \sum_{k=1}^{s_Y} \sigma_{k}^2 %(x,L^{-1}\varphi_k^Y)_X L^{-1} \varphi_k^Y,$$
		%
		%
		%and the rank of $K K^*\leq s_Y$. Since the number of nonzero eigenvalues cannot exceed the rank we have $s_X\leq s_Y$. Then, with part (3.), we have equality when $L$ is invertible.
		%
	\end{enumerate}
\end{proof}

%In the case where the number of nonzero POD eigenvalues is finite, this lemma allows us to represent $x$ and $y$ in terms of $K$ and $K^Y$, respectively. This simplifies many of the proofs when this is satisfied, and in some cases, allows us to get results that we cannot when the number of nonzero POD eigenvalues is infinite. We use this result directly in the discrete case proofs. 
The following lemma gives further results about the connections between the two main sets of POD eigenvalues under consideration in this paper, i.e., the POD eigenvalues for the data $\{w_j\}$ and the data $\{Lw_j\}$. With extra assumptions, we can use the fact that all the POD eigenvalues are nonzero for one set of data to obtain the same conclusion for the other set of data. 

\begin{lemma} \label{xypodrel}
	\begin{enumerate}
		\item If $L$ is bounded, $\mathcal{R}(L)$ is dense in $Y$, and the POD eigenvalues for $\{w_j\}$ are all nonzero, then the POD eigenvalues for $\{Lw_j\}$ are all nonzero. 
		
		\item If $L^{-1}$ is bounded, $\mathcal{R}(L^{-1})$ is dense in $X$, and the POD eigenvalues for $\{Lw_j\}$ are all nonzero, then the POD eigenvalues for $\{w_j\}$ are all nonzero.
	\end{enumerate} 
\end{lemma}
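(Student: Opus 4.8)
The plan is to reduce both statements to the kernel criteria established in \Cref{prob4}: all POD eigenvalues for $\{w_j\}$ are nonzero if and only if $\mathrm{ker}(K^*) = \{0\}$, and all POD eigenvalues for $\{Lw_j\}$ are nonzero if and only if $\mathrm{ker}((K^Y)^*) = \{0\}$, where $K^Y = LK$. Given these equivalences, everything follows from the two factorizations relating $K$ and $K^Y$, the adjoint composition rule for bounded operators, and the standard fact that for a bounded operator $T$ one has $\mathrm{ker}(T^*) = \mathcal{R}(T)^\perp$, so that density of $\mathcal{R}(T)$ is equivalent to $\mathrm{ker}(T^*) = \{0\}$.

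For the first item, I would start from the hypothesis, which by \Cref{prob4} means $\mathrm{ker}(K^*) = \{0\}$. Since $L : X \to Y$ is bounded and everywhere defined, $(K^Y)^* = (LK)^* = K^* L^*$. If $y \in \mathrm{ker}((K^Y)^*)$, then $K^*(L^* y) = 0$, so $L^* y \in \mathrm{ker}(K^*) = \{0\}$, i.e.\ $L^* y = 0$. Since $\mathcal{R}(L)$ is dense in $Y$, $\mathrm{ker}(L^*) = \mathcal{R}(L)^\perp = \{0\}$, hence $y = 0$. Thus $\mathrm{ker}((K^Y)^*) = \{0\}$, and \Cref{prob4} gives that the POD eigenvalues for $\{Lw_j\}$ are all nonzero.

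For the second item, the argument is the mirror image but uses the other factorization. Because $L$ is invertible and $\mathcal{R}(K^Y) \subset \mathcal{R}(L) = \mathcal{D}(L^{-1})$, we have $K = L^{-1} K^Y$; since $L^{-1}$ is bounded, taking adjoints gives $K^* = (K^Y)^* L^{-*}$. Now assume the POD eigenvalues for $\{Lw_j\}$ are all nonzero, so $\mathrm{ker}((K^Y)^*) = \{0\}$ by \Cref{prob4}. If $x \in \mathrm{ker}(K^*)$, then $(K^Y)^*(L^{-*} x) = 0$, hence $L^{-*} x = 0$, so $x \in \mathrm{ker}(L^{-*}) = \mathcal{R}(L^{-1})^\perp = \{0\}$ by density of $\mathcal{R}(L^{-1})$ in $X$. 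Therefore $\mathrm{ker}(K^*) = \{0\}$, and \Cref{prob4} yields the claim.

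The computations are short, so the main thing to be careful about is the functional-analytic bookkeeping in the second item: $L^{-1}$ need not be everywhere defined (its domain is $\mathcal{R}(L)$), so to make sense of $L^{-*}$ and of the identity $K^* = (K^Y)^* L^{-*}$ I would pass to the unique bounded extension of $L^{-1}$ to all of $Y$ --- with which $K = L^{-1} K^Y$ still holds since the two operators agree on $\mathcal{R}(K^Y) \subset \mathcal{R}(L)$ --- and note that this extension has the same range closure as $L^{-1}$, so $\mathrm{ker}(L^{-*}) = \mathcal{R}(L^{-1})^\perp$ as used. No density assumptions enter the adjoint composition identities, since every operator appearing there ($L$, $K$, $K^Y$, and the bounded extension of $L^{-1}$) is bounded, so this part is genuinely routine.
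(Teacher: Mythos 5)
Your proof is correct, but it runs on the dual side of the argument the paper actually gives. The paper stays with ranges: from \Cref{prob4} it gets $X = \overline{\mathcal{R}(K)}$, then shows $\mathcal{R}(K^Y) = \mathcal{R}(LK)$ is dense in $Y$ by a direct $\varepsilon/2$ approximation (approximate $y$ by $Lx$ using density of $\mathcal{R}(L)$, then $x$ by $Kf$ using boundedness of $L$ to control the propagated error), and converts back via $\mathrm{ker}((K^Y)^*) = \{0\}$. You instead factor the adjoint, $(K^Y)^* = K^*L^*$, and chase kernels using $\mathrm{ker}(L^*) = \mathcal{R}(L)^\perp$; this is the same underlying fact expressed through orthogonal complements rather than approximation. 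The trade-off shows up in item 2: the paper's (omitted, ``similar'') range argument never needs an adjoint of $L^{-1}$, whereas your route must make sense of $L^{-*}$ when $\mathcal{D}(L^{-1}) = \mathcal{R}(L)$ is not dense in $Y$ --- you correctly handle this by passing to a bounded extension $T$ of $L^{-1}$ with $K = TK^Y$ and $\mathcal{R}(T)^\perp \subseteq \mathcal{R}(L^{-1})^\perp = \{0\}$, but this bookkeeping is exactly what the paper's formulation avoids. Conversely, your version makes the logical structure (hypothesis in, kernel out) slightly more transparent and needs no explicit $\varepsilon$ estimates. Both proofs are complete and rest on the same two pillars: \Cref{prob4} and the boundedness of $L$ (resp.\ $L^{-1}$).
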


\begin{proof}
	%\begin{enumerate} 
	%\item If $s_X < \infty$, then $X = \mathcal{R}(K)$ by part (1) of \Cref{prob4}. Since $\mathcal{R}(L)$ is dense in $Y$, for all $y\in Y$ there exists $u\in \mathcal{R}(L)$ such that $\|y-u\|_Y < \epsilon$. But $u = Lx = LKf = K^Yf \in \mathcal{R}(K^Y)$. So $\overline{\mathcal{R}(K^Y)}=Y$ and $\mathrm{ker}((K^Y)^*) = \{0\}$.
	
	The proofs of the two items are similar; we only prove the first item.
	
	Since $X = \mathrm{ker}(K^*) \oplus \overline{ \mathcal{R}(K) }$ and $\mathrm{ker}(K^*) = \{0\} $ (\Cref{prob4}, \Cref{prob4_part1}), we have $ X = \overline{ \mathcal{R}(K) } $.  Let $ \varepsilon > 0 $ and let $ y \in Y $.  Since $ \mathcal{R}(L) $ is dense in $ Y $, there exists $x \in X$ such that $\|y-Lx\|_Y < \varepsilon/2$. Since $X = \overline{\mathcal{R}(K)}$, for  this $x$ there exists $f \in S$ such that $\|x-Kf\|_X < \varepsilon/(2\|L\|) $.  This gives
	%		
	%We want to show $\|y-r\|_Y <\epsilon$ for some $r\in \mathcal{R}(K^Y)$. Take $r = LKf$. Then
	%
	$$\|y-LKf\|_Y < \|y-Lx\|_Y+\|Lx-LKf\|_Y < \frac{\varepsilon}{2} + \|L\|\frac{\varepsilon}{2\|L\|} < \varepsilon,$$
	which shows $\overline{\mathcal{R}(K^Y)}=Y$ and $\mathrm{ker}((K^Y)^*) = \{0\}$.  Thus, the POD eigenvalues for $\{Lw_j\}$ are all nonzero by \Cref{prob4}, \Cref{prob4_part2}.
	%
	%\item This proof is similar to the proof of 1.
	%\end{enumerate}
\end{proof}

%------------------------------------------------------------------------------------------------
\section{Error Formulas}
\label{Section:errors}
%------------------------------------------------------------------------------------------------
One goal of this paper is to provide exact formulas for POD data approximation errors. The two main results of this section can be found in \Cref{discrete1} and \Cref{conterror}. The section is split between the discrete case, where we can use a more direct proof approach, and the continuous case, which requires more care since the data can have infinitely many nonzero POD eigenvalues. %(The results in this section correspond to Theorems 5.2, 5.3, and 5.4 of \cite{Singler14}).

%------------------------------------------------------------------------------------------------
\subsection{Discrete Case}\label{discreteerror}

%------------------------------------------------------------------------------------------------

First we introduce several representations that will be useful in the proof of \Cref{discrete1} below.  Recall, $ s_X = \mathrm{rank}(K) < \infty $ is the number of nonzero POD singular values (or POD eigenvalues) for the data $\{w_j\}$.  By the known POD error formula \eqref{knownPODerror}, we have
\begin{equation*}
w_j = \Pi_{s_X}^X w_j = \sum_{k=1}^{s_X} \iprod{w_j}{\varphi_k}{X} \varphi_k  \quad  \text{ and }  \quad  Lw_j = \sum_{k=1}^{s_X} \iprod{w_j}{\varphi_k}{X} L\varphi_k.
\end{equation*}
Note that since the sums are finite, $ \{ \varphi_k \} \subset \mathcal{D}(L) $, and $L$ is linear we can pull $L$ through the sums in this section without any additional assumptions. This is one point where the discrete and continuous cases differ.

Next, from \Cref{discretebackground} we know for all $ j \leq s $ and $ k \leq s_X $ we have
$$
\iprod{w_j}{\varphi_k}{X} = \overline{ \iprod{\varphi_k}{w_j}{X} } = \overline{(K^*\varphi_k)_j} = \sigma_k \overline{f_{k,j}},
$$
where $ f_{k,j} $ denotes the $ j $th component of the singular vector $ f_k \in \mathbb{K}^s $.  This gives
\begin{equation}\label{wj and Lwj}
w_j = \sum_{k=1}^{s_X} \sigma_k \, \overline{f_{k,j}} \, \varphi_k  \quad  \text{ and }  \quad Lw_j = \sum_{k=1}^{s_X} \sigma_k \, \overline{f_{k,j}} \,  L\varphi_k.
\end{equation} 
%
%We can apply $\Pi_r^Y$ to (\ref{wj and Lwj}).
%\begin{equation}
%\Pi _r^Y Lw_j = \Pi _r^Y(\sum_{k=1}^{s_X} \sigma_k \overline{f_{k,j}} L\varphi_k) = \sum_{k=1}^{s_X} %\sigma_k \overline{f_{k,j}} \Pi _r^Y L\varphi_k
%\end{equation}	
%If there is a $k$ such that $\sigma_k = 0$, then $s_X<s$. Note that we still have $(w_j,\varphi_i)_X = \overline{(\varphi_i,w_j)_X} = \overline{(K^*\varphi_i)_j}$. This value is zero for $j>s_X $ by definition. Thus the sums from \Cref{wj and Lwj} can be truncated at the value $s_X$.
Also, recall $\{f_j\}$ are orthonormal in $S$, which yields $$\sum_{j=1}^{s}\gamma_j \overline{f_{k,j}}{f_{\ell,j}} = (f_\ell,f_k)_{S} = \delta_{\ell,k}.$$

\begin{theorem} \label{discrete1} The data approximation errors are given by
	\begin{equation}
	\sum_{j=1}^{s} \gamma_j \|Lw_j-L\Pi _r^Xw_j\|_Y^2 = \sum_{k=r+1}^{s_X}\sigma_k^2\|L\varphi_k\|_Y^2,
	\end{equation}
	and
	\begin{equation}\label{2nddiscrete}
	\sum_{j=1}^{s}\gamma_j\|Lw_j-\Pi _r^{Y}Lw_j\|_Y^2=\sum_{k=r+1}^{s_X}\sigma_{k}^2 \|L\varphi_k-\Pi _r^Y L\varphi_k\|^2_Y.
	\end{equation}
	Also, if $L$ is invertible, then
	\begin{equation}
	\sum_{j=1}^{s}\gamma_j\|w_j-L^{-1}\Pi _r^{Y}Lw_j\|_X^2 = \sum_{k=r+1}^{s_X}\sigma_{k}^2 \|\varphi_k-L^{-1}\Pi _r^Y L\varphi_k\|^2_X .
	\end{equation} 
\end{theorem}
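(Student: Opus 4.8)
The plan is to prove all three identities in \Cref{discrete1} by a single direct computation, substituting the finite expansions \eqref{wj and Lwj} and using the orthonormality of $\{f_j\}$ in $S$ and of $\{\varphi_k\}$ in $X$. This is the "more direct proof approach" advertised for the discrete case, and it works precisely because all sums are finite and $L$ is linear, so $L$ commutes with every sum with no closedness or Bochner-integral concerns.

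Here is how I would carry it out for the second identity \eqref{2nddiscrete}, which is the representative case. Starting from $Lw_j = \sum_{k=1}^{s_X} \sigma_k \overline{f_{k,j}} L\varphi_k$, I split off the first $r$ terms: since $\Pi_r^X \varphi_k = \varphi_k$ for $k \le r$ and $\Pi_r^X \varphi_k = 0$ for $k > r$, and since $\Pi_r^Y L\varphi_k = L\varphi_k$ for $k \le r$ (because $L\varphi_k \in Y_r$ and $\Pi_r^Y$ is a projection onto $Y_r$), I get
$$
Lw_j - \Pi_r^Y L w_j = \sum_{k=r+1}^{s_X} \sigma_k \overline{f_{k,j}} \bigl( L\varphi_k - \Pi_r^Y L\varphi_k \bigr).
$$
Then I compute $\|Lw_j - \Pi_r^Y L w_j\|_Y^2$ by expanding the norm as a double sum over indices $k,\ell$ from $r+1$ to $s_X$, multiply by $\gamma_j$, and sum over $j$ from $1$ to $s$. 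Interchanging the (finite) sums and invoking $\sum_{j=1}^s \gamma_j \overline{f_{k,j}} f_{\ell,j} = (f_\ell, f_k)_S = \delta_{\ell,k}$ collapses the double sum to the diagonal, yielding $\sum_{k=r+1}^{s_X} \sigma_k^2 \|L\varphi_k - \Pi_r^Y L\varphi_k\|_Y^2$. The first identity is the special case $\Pi_r^Y = 0$ in the subtracted term (i.e.\ replace $L\varphi_k - \Pi_r^Y L\varphi_k$ by $L\varphi_k$), and the third identity follows the same template after noting that when $L$ is invertible, $w_j = \sum_{k=1}^{s_X} \sigma_k \overline{f_{k,j}} \varphi_k$, that $L^{-1}\Pi_r^Y L \varphi_k = \varphi_k$ for $k \le r$ (as computed in the proof of \Cref{boundedext}, part 3), and that $\{\varphi_k\}$ is orthonormal in $X$ so the norm expansion again produces a Kronecker delta on the mode index.

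There is no serious obstacle here; the only point requiring a word of care is the justification that $L$ can be pulled through the finite sums defining $w_j$, which is exactly the observation already recorded in the text just before the theorem statement ($\{\varphi_k\}_{k=1}^{s_X} \subset \mathcal{D}(L)$ by \eqref{eqn:PODmodes_domL} under the main assumption, the sums are finite, and $L$ is linear). For the third identity one also needs $L\varphi_k \in \mathcal{R}(L)= \mathcal{D}(L^{-1})$, which is immediate, so $L^{-1}\Pi_r^Y L\varphi_k$ is well defined; the identity $L^{-1}\Pi_r^Y L\varphi_k = \varphi_k$ for $k \le r$ holds because $\Pi_r^Y L\varphi_k = L\varphi_k$ and $L^{-1}L\varphi_k = \varphi_k$. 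I would present the computation once in full detail for \eqref{2nddiscrete} and then remark that the other two identities follow by the identical argument with the obvious modifications, rather than writing out three nearly identical displays.
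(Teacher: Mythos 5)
Your proposal is correct and is essentially the paper's own argument: expand $Lw_j$ via \eqref{wj and Lwj}, use that $\Pi_r^Y$ fixes $L\varphi_k$ for $k\leq r$ (and $\Pi_r^X\varphi_k=\varphi_k$ for $k\leq r$, $=0$ for $k>r$), and collapse the resulting double sum using $\sum_{j=1}^s\gamma_j\overline{f_{k,j}}f_{\ell,j}=\delta_{\ell,k}$ — the only difference being that you truncate to $k>r$ before expanding the norm while the paper truncates at the end, an immaterial reordering. One small caveat: in your aside on the third identity, the Kronecker delta on the mode index again comes from the $S$-orthonormality of $\{f_k\}$, not from the orthonormality of $\{\varphi_k\}$ in $X$ (the cross terms $(\varphi_k-L^{-1}\Pi_r^Y L\varphi_k,\,\varphi_\ell-L^{-1}\Pi_r^Y L\varphi_\ell)_X$ need not vanish on their own), but since you apply the same template as in \eqref{2nddiscrete} this does not affect correctness.
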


%\begin{proof}
%Taking $r\leq s_X$ and using the sums above we get
%	
%$$ L(w_j - \Pi _r^X w_j) = \sum_{k=r+1}^{s_X} \sigma_k \overline{f_{k,j}} L\varphi_k.$$
%	
%Therefore, 
%	
%	\begin{align*}
%	\sum_{j=1}^{s} \| L(w_j - \Pi _r^X w_j) \|_Y^2  & = \sum_{j=1}^{s} ( L(w_j-\Pi _r^X w_j) , L(w_j-\Pi _r^X w_j) )_Y \\
%	& = \sum_{j=1}^{s} \left( \sum_{k=r+1}^{s_X} \sigma_k \overline{f_{k,j}} L\varphi_k , \sum_{\ell=r+1}^{s_X} \sigma_\ell \overline{f_{\ell,j}} L\varphi_\ell \right)_Y \\
%	& =  \sum_{j=1}^{s} \sum_{k,\ell=r+1}^{s_X} \sigma_k \sigma_\ell f_{\ell,j} \overline{f_{k,j}} \, (L\varphi_k,L\varphi_\ell)_Y \\
%	& =  \sum_{k,\ell=r+1}^{s_X} \sigma_k \sigma_\ell \left(\sum_{j=1}^{s}f_{\ell,j}\overline{f_{k,j}}\right) \, (L\varphi_k,L\varphi_\ell)_Y \\
%	& = \sum_{k,\ell=r+1}^{s_X} \sigma_k \sigma_\ell (f_{\ell},f_k)_{\mathbb{K}^s} \, (L\varphi_k,L\varphi_\ell)_Y \\
%	& = \sum_{k=r+1}^{s_X} \sigma_k^2\, \|L\varphi_k\|_Y^2
%	\end{align*}	
%	
%\end{proof}

%\begin{theorem}\label{discrete2}
%	$$\sum_{k=1}^{s}\|Lw_k-\Pi _r^{Y}Lw_k\|_Y^2=\sum_{k=r+1}^{s_X}\sigma_{k}^2 \|L\varphi_k-\Pi _r^Y L\varphi_k\|^2_Y$$
%\end{theorem}

\begin{proof} We only prove \eqref{2nddiscrete}. The proofs of the other two results are similar.	First, note we can apply $\Pi_r^Y$ to $Lw_j$ given in \eqref{wj and Lwj} to get
	%
	%$$\Pi _r^Y Lw_j = \sum_{k=1}^{s_X} \sigma_k \, \overline{f_{k,j}} \, \Pi _r^Y L\varphi_k.
	%$$
	%Then 
	%
	$$Lw_j -\Pi _r^Y w_j = \sum_{k=1}^{s_X} \sigma_k \, \overline{f_{k,j}} \, (L\varphi_k-\Pi _r^Y L\varphi_k). $$
	Then
	\begin{align*}
	\sum_{j=1}^{s}\gamma_j\|Lw_j -\Pi _r^Y Lw_j\|^2_Y &= \sum_{j=1}^{s}\gamma_j \left(\sum_{k=1}^{s_X} \sigma_k \overline{f_{k,j}} (L\varphi_k-\Pi _r^Y L\varphi_k),  \sum_{\ell=1}^{s_X} \sigma_\ell \overline{f_{\ell,j}} (L\varphi_\ell-\Pi _r^Y L\varphi_\ell) \right)_Y \\
	& = \sum_{j=1}^s\gamma_j \sum_{\ell,k=1}^{s_X} \sigma_k \sigma_\ell \overline{f_{k,j}}{f_{\ell,j}} \left( L\varphi_k-\Pi _r^Y L\varphi_k, L\varphi_\ell-\Pi _r^Y L\varphi_\ell \right)_Y \\
	& = \sum_{\ell,k=1}^{s_X} \sigma_k \sigma_\ell \left(\sum_{j=1}^s \gamma_j \overline{f_{k,j}}{f_{\ell,j}} \right) \left( L\varphi_k-\Pi _r^Y L\varphi_k, L\varphi_\ell-\Pi _r^Y L\varphi_\ell \right)_Y \\
	& = \sum_{k=1}^{s_X} \sigma_k^2 \left( L\varphi_k-\Pi _r^Y L\varphi_k, L\varphi_k-\Pi _r^Y L\varphi_k \right)_Y \\
	& =\sum_{k=1}^{s_X} \sigma_k^2 \|L\varphi_k-\Pi _r^Y L\varphi_k\|^2_Y.
	\end{align*}
	Now note that $\Pi _r^Y L \varphi_k = L\varphi_k$ for $k=1,...,r$ since $ \Pi_r^Y $ is a projection onto $ Y_r = \mathrm{span}\{ L \varphi_k \}_{k=1}^r $.
	Therefore, 
	$$\sum_{j=1}^{s}\gamma_j\|Lw_j -\Pi _r^Y Lw_j\|^2_Y = \sum_{k=r+1}^{s_X}\sigma_{k}^2 \|L\varphi_k-\Pi _r^Y L\varphi_k\|^2_Y.$$
\end{proof}

\begin{remark}
	In \Cref{cor:error_bounds_each_snapshot}, we focus on error bounds for approximating each individual data snapshot $ w_\ell $ with various POD projections.  Also, another way to prove \Cref{discrete1} is to use the Hilbert Schmidt norm results in \Cref{HSconv}. The proof we give above requires less background. However, we do require \Cref{HSconv} for the continuous case below.
\end{remark}

%\begin{theorem}
%	If $L$ is invertible, 
%	$$\sum_{k=1}^{s}\|w_k-L^{-1}\Pi _r^{Y}Lw_k\|_X^2 = \sum_{k=r+1}^{s_X}\sigma_{k}^2 \|\varphi_k-L^{-1}\Pi _r^Y L\varphi_k\|^2_X $$
%\end{theorem}

%\begin{proof}
%Applying $L^{-1}$ to $\Pi _r^Y Lw_j$ from above we have,
%	$$L^{-1}(\Pi _r^Y Lw_j) = \sum_{k=1}^{s_X} \sigma_k \overline{f_{k,j}}\, L^{-1}(\Pi _r^Y L\varphi_k). $$
%	Then, 
%	$$w_j -L^{-1}\Pi _r^Y Lw_j = \sum_{k=1}^{s_X} \sigma_k \overline{f_{k,j}} (\varphi_k-L^{-1}\Pi _r^Y L\varphi_k). $$
%	
%	Thus, similarly to the proof for Theorem \ref{discrete2}, 
%	\begin{equation*}
%	\sum_{j=1}^{s}\|w_j -L^{-1}\Pi _r^Y Lw_j\|^2_X 
%	= \sum_{k=1}^{s_X} \sigma_k^2 \|\varphi_k-L^{-1}\Pi _r^Y L\varphi_k\|^2_X
%	\end{equation*}
%	
%	Now note that $\Pi _r^Y L \varphi_k = L\varphi_k$ for $k=1,...,r$ since $L\varphi_k \in Y_r$ for $k=1,...,r$. Then $\varphi_k-L^{-1}\Pi _r^Y L\varphi_k = \varphi_k-L^{-1}(L\varphi_k)=\varphi_k-\varphi_k=0$ for $k=1,...,r$.
%	
%	So we get, 
%	$$\sum_{j=1}^{s}\|w_j -L^{-1}\Pi _r^Y Lw_j\|^2_X = \sum_{k=r+1}^{s_X}\sigma_{k}^2 \|\varphi_k-L^{-1}\Pi _r^Y L\varphi_k\|^2_X$$
%	
%\end{proof}

%------------------------------------------------------------------------------------------------
\subsection{Continuous Case}
\label{Section:ContError}

%------------------------------------------------------------------------------------------------

For the continuous case we must consider the possibility that the number of nonzero POD eigenvalues is infinite.  We approach this case differently from the discrete case above. We show each of the data approximation errors we consider is equal to one of the Hilbert-Schmidt norm errors from \Cref{HSconv}. We use that result to prove the convergence of the errors to zero in the case of an infinite number of nonzero POD eigenvalues.

For one case, we need to make an additional assumption on $L^{-1}$.
\begin{quote}
	\textbf{The $L^{-1}$ assumption:} We assume 
	\begin{enumerate}
		\item $s_X < \infty$, or
		\item $
		L^{-1}\Pi_r^Y L K f = \sum_{j=1}^m \int_{\mathcal{O}} f_j(t) L^{-1}\Pi_r^Y Lw_j (t)dt
		$ for all $ f \in S $.
	\end{enumerate}
\end{quote}
\begin{remark}
	Note that if $s_X < \infty$, then the proof technique in \Cref{discreteerror} above can be used for the continuous cases, with some minor modifications to deal with the change in the space $ S $.  The second condition is similar to the main assumption made in \Cref{contcase:mainassumption}.  Any of the three common conditions in \Cref{remark:cont_assumption} that guarantee the main assumption holds also imply that the second condition in the $ L^{-1} $ assumption holds.
	%
	% The proofs for the continuous case with a finite number of nonzero POD eigenvalues will follow almost exactly from the proofs in section above. The main difference being that the $\{w_j\}$ are now elements of $L^2(\mathcal{O};X)$, i.e. we have $\{w_j(t)\}$ in place of $\{w_j\}$. Also, the inner products differ between the spaces and the main assumption holds in the continuous case.
\end{remark}

%\begin{remark}
%	This assumption is similar to the main assumption made in \Cref{contcase:mainassumption}. If $s_X < \infty$, we can use the proof technique from \Cref{discrete1}. Note that all the conditions discussed for the main assumption also give us the second part of this assumption on $L^{-1}$. Specifically, if either $L$ or $L^{-1}$ is bounded or closed the second condition holds. 
%\end{remark}
%

\begin{theorem}\label{conterror} The data approximation errors are given by
	\begin{equation}\label{conterroreq1}
	\sum_{j=1}^{m} \|L w_j - L \Pi _r^X w_j \|^2_{L^2(\mathcal{O};Y)}  = \sum_{k>r} \sigma_k^2 \|L \varphi_k \|^2_Y
	\end{equation}
	and	
	\begin{equation}\label{conterroreq2}
	\sum_{j=1}^{m} \|L w_j -  \Pi _r^Y L w_j \|^2_{L^2(\mathcal{O};Y)}  = \sum_{k>r} \sigma_k^2 \|L \varphi_k - \Pi _r^Y L \varphi_k \|^2_Y .
	\end{equation}
	Also if the $L^{-1}$ assumption holds then
	\begin{equation}\label{last cont error}
	\sum_{j=1}^{m} \| w_j - L^{-1} \Pi _r^Y L w_j \|^2_{L^2(\mathcal{O};X)}  = \sum_{k>r} \sigma_k^2 \| \varphi_k - L^{-1} \Pi _r^Y L \varphi_k \|^2_X .
	\end{equation}
	In the case $s_X = \infty$, the following convergence results hold.  For \eqref{conterroreq1}: The error tends to zero as $r \to \infty$.  For \eqref{conterroreq2}: If $\{\Pi_r^Y\}$ is uniformly bounded in operator norm, then the error goes to zero as $ r \to \infty$. For \eqref{last cont error}: If $L^{-1}$ is bounded and $\{\Pi_r^Y\}$ is uniformly bounded in operator norm, then the error tends to zero as $r \to \infty$.  For \eqref{last cont error}: If $\{L^{-1} \Pi_r^Y L\}$ is uniformly bounded in operator norm, then the error converges to zero as $r \to \infty$. 
	%
	%Furthermore, the sum in \Cref{conterroreq1} tends to zero as $r \to \infty$. For \Cref{conterroreq2}, if $s_X = \infty$, then the sum goes to zero if $\Pi_r^Y$ is uniformly bounded in operator norm. Finally, for \Cref{last cont error}, there are two cases. If $L^{-1}$ is bounded and $\Pi_r^Y$ is uniformly bounded in operator norm, then the sum goes to zero as $r \to \infty$. If $L^{-1} \Pi_r^Y L$ is uniformly bounded in operator norm, then \Cref{last cont error} goes to zero as $r \to \infty$. 
	
\end{theorem}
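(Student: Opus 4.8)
The plan is to follow the strategy announced just before the theorem: identify each data approximation error on the left-hand side with one of the Hilbert--Schmidt norm errors already computed in \Cref{HSconv}, and then read off both the exact formula and the convergence statement directly from that lemma. The bridge between the two is \Cref{HS}: whenever an operator on $S$ can be written in the form $f \mapsto \sum_{j=1}^m \int_\mathcal{O} f_j(t) z_j(t)\,dt$ with $\{z_j\} \subset L^2(\mathcal{O};Z)$, its squared Hilbert--Schmidt norm is exactly $\sum_{j=1}^m \|z_j\|_{L^2(\mathcal{O};Z)}^2$. So the real work is to check that the three operators $LK - L\Pi_r^X K$, $LK - \Pi_r^Y LK$, and $K - L^{-1}\Pi_r^Y LK$ are genuine POD operators of this form, for the data sequences $\{Lw_j - L\Pi_r^X w_j\}$, $\{Lw_j - \Pi_r^Y Lw_j\}$, and $\{w_j - L^{-1}\Pi_r^Y Lw_j\}$, respectively.

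First I would dispose of the easy pull-through identities. Since $\Pi_r^X$ is bounded, $\Pi_r^X Kf = \sum_j \int_\mathcal{O} f_j(t)\Pi_r^X w_j(t)\,dt$; and because $\sigma_r > 0$ forces $\{\varphi_k\}_{k=1}^r \subset \mathcal{D}(L)$, the composition $L\Pi_r^X$ acts as the finite linear map $x \mapsto \sum_{k=1}^r (x,\varphi_k)_X L\varphi_k$, so moving the finite sum and the inner products through the Bochner integral gives $L\Pi_r^X Kf = \sum_{k=1}^r (Kf,\varphi_k)_X L\varphi_k = \sum_j \int_\mathcal{O} f_j(t) L\Pi_r^X w_j(t)\,dt$. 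Combined with the main assumption $LKf = \sum_j \int_\mathcal{O} f_j(t) Lw_j(t)\,dt$ with $\{Lw_j\} \subset L^2(\mathcal{O};Y)$, and noting $L\Pi_r^X w_j \in L^2(\mathcal{O};Y)$ because each $t \mapsto (w_j(t),\varphi_k)_X$ lies in $L^2(\mathcal{O})$, this puts $LK - L\Pi_r^X K$ into the form required by \Cref{HS}. Applying the bounded operator $\Pi_r^Y$ to $LKf = \sum_j \int_\mathcal{O} f_j Lw_j\,dt$ likewise gives $\Pi_r^Y LKf = \sum_j \int_\mathcal{O} f_j\,\Pi_r^Y Lw_j\,dt$, handling the second operator. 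Hence \Cref{HS} yields
\begin{align*}
\|LK - L\Pi_r^X K\|_{\mathrm{HS}(S,Y)}^2 &= \sum_{j=1}^m \|Lw_j - L\Pi_r^X w_j\|_{L^2(\mathcal{O};Y)}^2, \\
\|LK - \Pi_r^Y LK\|_{\mathrm{HS}(S,Y)}^2 &= \sum_{j=1}^m \|Lw_j - \Pi_r^Y Lw_j\|_{L^2(\mathcal{O};Y)}^2,
\end{align*}
and combining these with \eqref{HSnormLPiXK} and \eqref{HSnormPiPsiLK} proves \eqref{conterroreq1} and \eqref{conterroreq2}.

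For the third formula, the identity $L^{-1}\Pi_r^Y LKf = \sum_j \int_\mathcal{O} f_j(t) L^{-1}\Pi_r^Y Lw_j(t)\,dt$ cannot be taken for granted, since $L^{-1}$ need not be bounded or closed and $\Pi_r^Y$ need not map into $\mathcal{R}(L)$ cleanly through an integral; this is exactly the content of the $L^{-1}$ assumption. Under case (2) the identity holds by hypothesis, while under case (1), $s_X < \infty$, the finite-sum argument of \Cref{discrete1} applies verbatim after replacing the $\Gamma$-weighted sums by integrals over $\mathcal{O}$, and in particular $L^{-1}\Pi_r^Y Lw_j \in L^2(\mathcal{O};X)$. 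Either way $K - L^{-1}\Pi_r^Y LK$ has the form required by \Cref{HS} with data $\{w_j - L^{-1}\Pi_r^Y Lw_j\} \subset L^2(\mathcal{O};X)$, so $\|K - L^{-1}\Pi_r^Y LK\|_{\mathrm{HS}(S,X)}^2 = \sum_j \|w_j - L^{-1}\Pi_r^Y Lw_j\|_{L^2(\mathcal{O};X)}^2$, and \eqref{HSnormL^{-1}PiPsiLK} gives \eqref{last cont error}. Finally, the four convergence claims for $s_X = \infty$ are immediate: the right-hand sides above are literally the Hilbert--Schmidt errors of \Cref{HSconv}, whose convergence to zero under the stated hypotheses (uniform boundedness of $\{\Pi_r^Y\}$, boundedness of $L^{-1}$, or uniform boundedness of $\{L^{-1}\Pi_r^Y L\}$) was already established there. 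The main obstacle is the functional-analytic bookkeeping around pulling the possibly unbounded $L$ and $L^{-1}$ through Bochner integrals and verifying membership in the correct $L^2(\mathcal{O};\cdot)$ spaces; once the three operators are recognized as bona fide POD operators, the theorem is a direct appeal to \Cref{HS} and \Cref{HSconv}.
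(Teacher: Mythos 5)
Your proposal is correct and follows essentially the same route as the paper: rewrite each of $LK-L\Pi_r^X K$, $LK-\Pi_r^Y LK$, and $K-L^{-1}\Pi_r^Y LK$ as a POD operator for the corresponding error data, apply \Cref{HS} to identify the data approximation error with the Hilbert--Schmidt norm, and then invoke \Cref{HSconv} for both the exact formulas and the convergence statements. Your extra care in verifying the pull-through identities, the $L^2(\mathcal{O};Y)$ membership of $L\Pi_r^X w_j$, and the two alternatives of the $L^{-1}$ assumption matches (and slightly elaborates on) the paper's argument.
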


\begin{remark}
	%Note that \Cref{boundedext}, part 2, shows that $L^{-1} \Pi_r^Y L$ is bounded for each $r$ whenever $L$ is bounded. The two cases for convergence of \Cref{last cont error}, are the conditions given in \Cref{HSconv}.
	Note that for the case $ s_X = \infty $, the conditions for convergence are exactly the conditions given in \Cref{HSconv}.
\end{remark}

\begin{proof} We prove \eqref{conterroreq1}, and the associated convergence result. The proofs of the other equalities and convergence results are similar.  We first show that the data approximation error has an integral representation, and then we use the two Hilbert-Schmidt results for POD operators from \Cref{sec:HS_POD_operators} to conclude.
	
	By definition, for $ f \in S $ we have
	\begin{align*}
	L\Pi _r^X Kf %&= L\sum_{k=1}^{r}(Kf,\varphi_k)_X \varphi_k  \\
	& = \sum_{k=1}^{r}(Kf,\varphi_k)_X L \varphi_k \\
	& = \sum_{k=1}^{r}\left( \sum_{j=1}^m \int_\mathcal{O} f_j(t) w_j(t) dt ,\varphi_k\right)_X L \varphi_k \\
	& = \sum_{j=1}^m \int_\mathcal{O} f_j(t) \sum_{k=1}^r (w_j(t),\varphi_k)_X L \varphi_k dt \\
	& = \sum_{j=1}^m \int_\mathcal{O} f_j(t) Lw_j^r(t) dt,
	\end{align*}
	where $w_j^r(t)= \Pi _r^X w_j(t) = \sum_{k=1}^r (w_j(t),\varphi_k)_X  \varphi_k$. Because of the main assumption, we can pull the operator $ L $ inside the integral to give
	$$
	  (LK-L\Pi _r^X K)f = \int_\mathcal{O} \sum_{j=1}^m f_j(t) [Lw_j(t)-Lw_j^r(t)] dt.
	$$
	%\begin{align*}
	%(LK-L\Pi _r^X K)f &= LKf - L\Pi _r^X Kf \\
	%%& = L\left(\int_\mathcal{O} \sum_{j=1}^mf_j(t) w_j(t) dt \right) - \int_\mathcal{O} \sum_{j=1}^m f_j(t) Lw_j^r(t) dt \\
	%& = \int_\mathcal{O} \sum_{j=1}^mf_j(t) L w_j(t) dt - \int_\mathcal{O} \sum_{j=1}^m f_j(t) Lw_j^r(t) dt \\
	%& = \int_\mathcal{O} \sum_{j=1}^m f_j(t) [Lw_j(t)-Lw_j^r(t)] dt.
	%\end{align*}	
	Since $ Lw_j-Lw_j^r \in L^2(\mathcal{O};Y) $ for each $ j $, by \Cref{HS} we have 
	$$\sum_{j=1}^{m} \|L w_j - L \Pi _r^X w_j \|^2_{L^2(\mathcal{O};Y)}  = \| LK - L \Pi _r^X K\|^2_{\mathrm{HS}(S,Y)}.$$
	\Cref{HSconv} proves both \eqref{conterroreq1} and the convergence result in the case $ s_X = \infty $.

	%$L$ is closed and invertible, $L^{-1}$ is closed, which allows us to apply the Bochner integral techniques. Then we can proceed as above. 
	
	%Similarly, 
	%\begin{align*}
	%(LK - \Pi _r^Y LK)f &= \int_\mathcal{O} \sum_{j=1}^s f_j(t) Lw_j(t) dt - %\int_\mathcal{O} \sum_{j=1}^m f_j(t) \Pi _r^Y Lw_j(t) dt \\
	%& = \int_\mathcal{O} \sum_{j=1}^m f_j(t) [Lw_j(t)-\Pi _r^Y Lw_j(t)]  dt.
	%\end{align*}
	
	%Then Lemma \ref{HS} gives 
	%$$\sum_{j=1}^{m} \|L w_j -  \Pi _r^Y L w_j \|^2_{Y}  = \| LK -  \Pi _r^Y L K\|^2_{\mathrm{HS}(S,Y)}$$
	%	which tends to zero as $r \to \infty$ by \ref{HSconv}.
	%	
	Note for \eqref{last cont error}, for $ f \in S $ the $L^{-1}$ assumption gives
	\begin{equation} \label{need}
	L^{-1} \Pi _r^Y L K f = \int_\mathcal{O} \sum_{j=1}^m f_j(t) L^{-1} \Pi _r^Y L w_j(t) dt,
	\end{equation}
	and then we proceed similarly to establish the result.
	%	similar to the above we get the error approximation
	%	\begin{equation*}
	%	 \sum_{j=1}^{m} \| w_j - L^{-1} \Pi _r^Y L w_j \|^2_{X}  = \| K - L^{-1} \Pi _r^Y L K\|^2_{\mathrm{HS}(S,X)}.
	%	\end{equation*} 
	%	Note by \ref{HSconv} the error tends to zero as $r \to \infty$ under either condition. 
\end{proof}

%The additional assumption made on the last equation that
% \begin{equation}\label{L inverse assumption}
%L^{-1}\Pi_r^Y L K f = \int_{\mathcal{O}} \sum_j f_j(t) L^{-1}\Pi_r^Y Lw_j (t)dt
% \end{equation}
% allows for the possibly unbounded operators to be pulled inside the integral. Cases similar to that in the main assumption guarantee this. If parts 2. or 3. of the main assumption hold then \Cref{L inverse assumption} holds. This equation also holds if $L^{-1}$ is bounded. Further, if certain conditions of \Cref{boundedext} part 4 hold, \Cref{last cont error} holds, mainly the conditions that ensure the existence of $L^{-*}$ and $\varphi_k \in \mathcal{D}(L^{-*})$.
%We can also show the representation holds if $L^{-1}$ is closed since that would allow the application of the Bochner integral. 

%The representation holds as long as $L^{-1}$ is closed, which we get by the assumption that $L$ is closed and invertible. 

%Remark: The third case also holds if $L^{-1}$ is closed. % Note, if $L$ is closed and invertible, $L^{-1}$ is closed. %Section 10.3 Kreyszig HW #4

%------------------------------------------------------------------------------------------------
\section{Pointwise Convergence of POD Projections}
\label{Section:convergence}
%------------------------------------------------------------------------------------------------

Recall that $ \{ \varphi_k \} $ is an orthonormal basis for $ X $, and therefore $\|\Pi_r^X x - x\|_X \to 0$ for all $ x \in X $.  In this section, we prove various types of pointwise convergence results for the other POD projections; namely, $ \Pi_r^Y $ from \Cref{sec:overview}, and $ L \Pi_r^X L^{-1} $ and $ L^{-1} \Pi_r^Y L $ from \Cref{sec:nonorth_POD_projections}.  The majority of this section is not split into the discrete and continuous cases because the proofs are similar for both, and many of the results hold regardless of case.  We do focus on the discrete case at the end of this section and address some assumptions made in the literature about approximations of each individual data snapshot using POD projections.

Pointwise convergence results for these POD projections are easiest to obtain when $ L $ and $ L^{-1} $ are both bounded.  We primarily focus on the case when either $ L $ or $ L^{-1} $ is unbounded.

Range conditions are an important factor in this section. When an element to be approximated by a POD projection is in the range of $K$ or $K^Y$, we can often get better results.  When certain conditions hold, we know these ranges exactly.  Recall from \Cref{prob4}, if all the POD eigenvalues for $\{w_j\}$ are nonzero and $ s_X < \infty $, then we know $ X = \mathcal{R}(K) $ and $ \mathrm{dim}(X) = s_X $. Note that in this case, the Hilbert space $ X $ must be finite dimensional.  If all the POD eigenvalues for $\{w_j\}$ are nonzero and $ s_X = \infty $ (i.e., $ X $ must be infinite dimensional), then \Cref{prob4} only gives $ X = \overline{ \mathcal{R}(K) } $.  We do not always obtain the better convergence results in this case.  Similar statements hold for the spaces $Y$ and $\mathcal{R}(K^Y)$.

Also, as in \Cref{Section:errors}, we sometimes need to consider different proof techniques in the case $ s_X = \infty $.% or $ s_Y = \infty $.

We begin with a pointwise convergence result for $ \Pi_r^Y $ assuming $ L $ is bounded.  For another pointwise convergence result for $ \Pi_r^Y $ with different assumptions, see \Cref{contyconvergence} below.
\begin{theorem}\label{bounded convergence of Pi_r^Psi}
	Assume $ L $ is bounded and $\{ \Pi_r^Y \} $ is uniformly bounded in operator norm.  If $ y \in \mathcal{R}(L) $, then $\Pi_r^Y y \to y$ as $ r $ increases.  In addition, if $\mathcal{R}(L)$ is dense in $Y$, then $\Pi_r^Y y \to y$ for all $y \in Y$.
	%
	%If $L$ is bounded, $\mathcal{R}(L)$ is dense in $Y$, and $\{ \Pi_r^Y \} $ is uniformly bounded in operator norm, then $\Pi_r^Y y \to y$ for all $y \in Y$. 
\end{theorem}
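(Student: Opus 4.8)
The plan is to deduce both claims from the single fact, recalled at the start of this section, that the orthogonal projections $\Pi_r^X$ converge strongly to the identity on $X$, together with the standing hypotheses that $L$ is bounded and $C := \sup_r \|\Pi_r^Y\| < \infty$. (Recall also that, as assumed throughout whenever $\Pi_r^Y$ appears, $\sigma_r > 0$, so $X_r \subset \mathcal{D}(L)$ and $Y_r = LX_r$ is well defined.)

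For the first assertion, I would fix $y \in \mathcal{R}(L)$ and write $y = Lx$ for some $x \in X$. The key observation is that $L\Pi_r^X x = \sum_{k=1}^r (x,\varphi_k)_X\, L\varphi_k \in Y_r$, so, since $\Pi_r^Y$ is a projection onto $Y_r$, it fixes this element: $\Pi_r^Y L \Pi_r^X x = L \Pi_r^X x$. Inserting this telescoping term gives
\[
\Pi_r^Y y - y = \Pi_r^Y Lx - L\Pi_r^X x + L\Pi_r^X x - Lx = (\Pi_r^Y - I)\, L\,(x - \Pi_r^X x),
\]
whence $\|\Pi_r^Y y - y\|_Y \le (C+1)\,\|L\|\,\|x - \Pi_r^X x\|_X$. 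Since $\|x - \Pi_r^X x\|_X \to 0$, the right-hand side tends to zero, which proves $\Pi_r^Y y \to y$. (Alternatively, for the subclass $y \in \mathcal{R}(K^Y)$ one could read off convergence directly from the Hilbert--Schmidt estimate \eqref{HSnormPiPsiLK} in \Cref{HSconv}, but the argument above covers all of $\mathcal{R}(L)$.)

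For the second assertion, assuming $\mathcal{R}(L)$ is dense in $Y$, I would run the standard "pointwise convergence on a dense set plus uniform boundedness" argument. Given $y \in Y$ and $\varepsilon > 0$, choose $y' \in \mathcal{R}(L)$ with $\|y - y'\|_Y < \varepsilon/\big(2(C+1)\big)$. Then
\[
\|\Pi_r^Y y - y\|_Y \le \|\Pi_r^Y(y - y')\|_Y + \|\Pi_r^Y y' - y'\|_Y + \|y' - y\|_Y \le (C+1)\,\|y - y'\|_Y + \|\Pi_r^Y y' - y'\|_Y < \tfrac{\varepsilon}{2} + \|\Pi_r^Y y' - y'\|_Y,
\]
and by the first assertion the remaining term is below $\varepsilon/2$ once $r$ is large enough; hence $\Pi_r^Y y \to y$.

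I do not anticipate a genuine obstacle: the proof is a one-line telescoping estimate followed by a density argument. The only points needing care are bookkeeping: verifying that $L\Pi_r^X x$ truly lands in $Y_r$ so that $\Pi_r^Y$ fixes it (this is exactly where $\sigma_r > 0$ and $\{\varphi_k\}_{k\le r}\subset\mathcal{D}(L)$ are used), and keeping track of the uniform operator-norm bound on $\{\Pi_r^Y\}$, which is essential in both the main estimate and the density step.
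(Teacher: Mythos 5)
Your proof is correct and follows essentially the same route as the paper: the same insertion of $L\Pi_r^X x$ (fixed by $\Pi_r^Y$ since it lies in $Y_r$), the same estimate via boundedness of $L$ and uniform boundedness of $\{\Pi_r^Y\}$, and the same density argument for the second claim, which the paper simply cites as the Banach--Steinhaus theorem. Writing the difference as the single factored term $(\Pi_r^Y - I)L(x-\Pi_r^X x)$ rather than two triangle-inequality terms is only a cosmetic variation.
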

\begin{proof}
	Let $y \in \mathcal{R}(L)$, so that $ y = L x $ for some $ x \in X $.  Note that since $ L \Pi_r^X x \in Y_r = \mathrm{span}\{ L \varphi_k \}_{k=1}^r $ and $ \Pi_r^Y $ is a projection onto $ Y_r $, we have $ \Pi_r^Y L \Pi_r^X x = L \Pi_r^X x $.  Then
	\begin{align*}
	\|\Pi_r^Y y - y \|_Y & \leq \| \Pi_r^Y L x - \Pi_r^Y L \Pi_r^X x \|_Y + \| \Pi_r^Y L \Pi_r^X x - L x \|_Y \\
	& = \| \Pi_r^Y L x - \Pi_r^Y L \Pi_r^X x \|_Y + \| L \Pi_r^X x - L x \|_Y \\
	& \leq \| \Pi_r^Y L \| \| x - \Pi_r^X x \|_Y + \| L \| \| \Pi_r^X x - x \|_Y,
	\end{align*}
	which converges to zero as $ r $ increases since $ \Pi_r^X x \to x $ and $\{ \Pi_r^Y \} $ is uniformly bounded in operator norm.  The final result follows directly from the Banach-Steinhaus theorem (i.e., the principle of uniform boundedness).
	%
	%	and let $C$ be the constant from the optimal approximation property in \Cref{optimalapproxprop}. Then $y = Lx$ for some $x \in X$. Then, using $L\Pi_r^X x \in Y_r$, we have
	%	\begin{align*}
	%	\|\Pi_r^Y y - y \|_Y & \leq C \inf_{y_r \in Y_r} \|y_r - y \|_Y \\
	%	& \leq C \|L\Pi_r^X x - Lx \|_Y \\
	%	& \leq C \|L\| \|\Pi_r^X x - x\|_X.
	%	\end{align*}
	%	Since $\|\Pi_r^X x - x\|_X \to 0$ for all $ x \in X $, we have $ \|\Pi_r^Y y - y \|_Y \to 0 $ for all $ y \in \mathcal{R}(L)$. Then since $\mathcal{R}(L)$ is dense in $Y$ and $\{ \Pi_r^Y \} $ are uniformly bounded in operator norm, $\Pi_r^Y y \to y$ for all $y \in Y$ by the principle of uniform boundedness.  
	%
\end{proof}

The next convergence result relies on the boundedness of either $L$ or $L^{-1}$ and certain range conditions involving $ L $.
\begin{theorem} \label{convergencewithboundedop}
	\begin{enumerate}
		\item For any $ y \in \mathcal{R}(L) = \mathcal{D}(L^{-1}) $, if $L$ is bounded, then $ \| L \Pi_r^X L^{-1} y - y \|_Y \to 0 $ as $ r $ increases.  In addition, if $\mathcal{R}(L)$ is dense in $Y$ and $ \{ L \Pi_r^X L^{-1} \} $ is uniformly bounded, then $ L \Pi_r^X L^{-1} y \to y$ for all $y \in Y$.
		
		\item For any $x \in \mathcal{D}(L) = \mathcal{R}(L^{-1})$, if $L^{-1}$ is bounded and $\Pi_r^Y y\to y$ for all $y \in Y$ as $r$ increases, then $ \| L^{-1} \Pi_r^Y L x - x \|_X  \to 0$ as $r$ increases.  In addition, if $\mathcal{D}(L)$ is dense in $X$ and $ \{ L^{-1} \Pi_r^Y L \} $ is uniformly bounded, then $ L^{-1} \Pi_r^Y L x \to x$ for all $x \in X$.
	\end{enumerate}
\end{theorem}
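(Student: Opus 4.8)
The plan is to reduce both statements to the convergence of an already-understood orthogonal projection, using only linearity and the boundedness hypotheses. Throughout I would invoke the standing assumption $\sigma_r > 0$, which via \eqref{eqn:PODmodes_domL} gives $X_r \subset \mathcal{D}(L)$ and hence $Y_r = LX_r \subset \mathcal{R}(L) = \mathcal{D}(L^{-1})$; this is what makes each composition below well-defined on the stated domain.

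For part 1, fix $y \in \mathcal{R}(L) = \mathcal{D}(L^{-1})$ and set $x = L^{-1}y$, which lies in $\mathcal{D}(L) = X$ since $L$ is bounded. Because $\Pi_r^X x \in X_r \subset \mathcal{D}(L)$ and $Lx = y$, linearity of $L$ gives
\[
  L\Pi_r^X L^{-1}y - y = L\big(\Pi_r^X x - x\big),
\]
so $\|L\Pi_r^X L^{-1}y - y\|_Y \le \|L\|\,\|\Pi_r^X x - x\|_X$, which tends to $0$ because $\{\varphi_k\}$ is an orthonormal basis of $X$. For the second claim, I would first note that the uniform bound on $\{L\Pi_r^X L^{-1}\}$ means each member, a priori defined only on the dense subspace $\mathcal{D}(L^{-1})$, extends uniquely to a bounded operator on $Y$ (still denoted $L\Pi_r^X L^{-1}$) with a common norm bound $C$. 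Then for arbitrary $y \in Y$ and any $\tilde y \in \mathcal{R}(L)$, the decomposition $L\Pi_r^X L^{-1}y - y = L\Pi_r^X L^{-1}(y - \tilde y) + (L\Pi_r^X L^{-1}\tilde y - \tilde y) + (\tilde y - y)$ yields $\limsup_r \|L\Pi_r^X L^{-1}y - y\|_Y \le (C+1)\|y - \tilde y\|_Y$, and density of $\mathcal{R}(L)$ lets $\|y - \tilde y\|_Y$ be taken arbitrarily small. This is the standard "uniformly bounded plus pointwise convergence on a dense set implies pointwise convergence everywhere" argument, already used in the proof of the preceding pointwise convergence result for $\Pi_r^Y$.

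For part 2, fix $x \in \mathcal{D}(L) = \mathcal{R}(L^{-1})$, so that $Lx \in \mathcal{R}(L)$ and $L^{-1}Lx = x$; since $\Pi_r^Y Lx \in Y_r \subset \mathcal{D}(L^{-1})$, linearity of $L^{-1}$ gives
\[
  L^{-1}\Pi_r^Y Lx - x = L^{-1}\big(\Pi_r^Y Lx - Lx\big),
\]
hence $\|L^{-1}\Pi_r^Y Lx - x\|_X \le \|L^{-1}\|\,\|\Pi_r^Y Lx - Lx\|_Y \to 0$ by the hypothesis $\Pi_r^Y y \to y$ applied with $y = Lx$. The extension to all $x \in X$ is the same density argument as above, now using density of $\mathcal{D}(L) = \mathcal{R}(L^{-1})$ in $X$ and uniform boundedness of $\{L^{-1}\Pi_r^Y L\}$. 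I do not anticipate a real obstacle; the only care required is the domain bookkeeping — keeping track of which dense domain each composition lives on, which is exactly where $\sigma_r > 0$ and the identities $\mathcal{D}(L^{-1}) = \mathcal{R}(L)$, $\mathcal{R}(L^{-1}) = \mathcal{D}(L)$ enter — together with the routine extension/density step needed for the "for all $y \in Y$" and "for all $x \in X$" conclusions.
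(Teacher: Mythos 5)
Your proposal is correct and follows essentially the same route as the paper: rewrite $L\Pi_r^X L^{-1}y - y = L(\Pi_r^X x - x)$ with $x = L^{-1}y$ (resp.\ $L^{-1}\Pi_r^Y Lx - x = L^{-1}(\Pi_r^Y Lx - Lx)$), bound by the operator norm of $L$ (resp.\ $L^{-1}$), and obtain the "for all" statements from uniform boundedness plus density. The extra domain bookkeeping you supply is consistent with, and slightly more explicit than, the paper's proof.
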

\begin{remark}
	Note that \Cref{bounded convergence of Pi_r^Psi} and \Cref{contyconvergence} give two cases where the assumption $\Pi_r^Y y \to y$ for all $y \in Y$ holds.  Also, the uniform boundedness of $ \{ L \Pi_r^X L^{-1} \} $ and $ \{ L^{-1} \Pi_r^Y L \} $ is not currently known, unless $ L $ and $ L^{-1} $ are both bounded.  Note that when $ L $ and $ L^{-1} $ are both bounded, \Cref{bounded convergence of Pi_r^Psi} gives $ \Pi_r^Y y \to y $ for all $ y \in Y $ whenever $\{ \Pi_r^Y \} $ is uniformly bounded; therefore, in this case \Cref{convergencewithboundedop} gives $ L \Pi_r^X L^{-1} y \to y$ for all $y \in Y$ and $ L^{-1} \Pi_r^Y L x \to x$ for all $x \in X$.
\end{remark}
\begin{proof}  We only prove the first result; the proof of the second is similar.  Since $y\in \mathcal{R}(L)$ we have $y = Lx$ for some $x\in X$. Then
	\begin{align*}
	\| L \Pi_r^X L^{-1} y - y \|_Y &= \|L\Pi_r^X x -Lx\|_Y \\
	& \leq \|L\|\,\|\Pi_r^X x - x \|_X, 
	\end{align*}
	which converges to zero as $r$ increases.  The final convergence result again follows from the principle of uniform boundedness.
	%
	%	\begin{enumerate}
	%		\item Since $y\in \mathcal{R}(L)$ we have $y = Lx$ for some $x\in X$. Then
	%		\begin{align*}
	%		\| L \Pi_r^X L^{-1} y - y \|_Y &= \|L\Pi_r^X x -Lx\|_Y \\
	%		& \leq \|L\|\,\|\Pi_r^X x - x \|_X, 
	%		\end{align*}
	%		which converges to zero as $r$ increases. 
	%	
	%		\item Similarly, we have $x = L^{-1}y$ for some $y \in Y$. Then% $ \| L^{-1} \Pi_r^Y L x - x \|_X = \|L^{-1} \Pi_r^Y y -L^{-1} y\|_X$. By our assumption, $\|\Pi_r^Y y -y \|_Y \to 0$ as $r$ increases. Therefore, 
	%		\begin{align*}
	%		\| L^{-1} \Pi_r^Y L x - x \|_X &= \|L^{-1} \Pi_r^Y y -L^{-1} y\|_X \\
	%		& \leq \|L^{-1}\| \, \|\Pi_r^Y y - y \|_Y,
	%		\end{align*}
	%		which converges to zero as $r$ increases, by the additional assumption.	
	%	\end{enumerate}
	%	
\end{proof}

%Note that \Cref{bounded convergence of Pi_r^Psi} and \Cref{contyconvergence} give two cases where the assumption $\Pi_r^Y y \to y$ for all $y \in Y$ holds.% This is not the only set of conditions which guarantee this. The following theorem and corollary give another set of conditions. 

Next, we consider how range conditions involving $ K $ and $ K^Y $ affect the convergence of POD projections.  We are able to obtain convergence rates, and at most require either $ L $ or $ L^{-1} $ to be bounded.  We begin with the POD projection $ \Pi_r^Y $ and then consider $ L \Pi_r^X L^{-1} $ and $ L^{-1} \Pi_r^Y L $.  We use the following simple lemma multiple times below.
\begin{lemma}\label{lemma:yN_simple_lemma}
	Assume $y \in \mathcal{R}(K^Y)$ so that $ y = K^Y g = L K g $ for some $ g \in S $.  If
	\begin{equation}\label{yN}
	y_N = LK_N g = L \Pi_N^X Kg,
	\end{equation}
	then $ y_N \to y $ as $ N $ increases.
\end{lemma}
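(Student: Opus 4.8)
The plan is to recognize that this is essentially an immediate consequence of the first Hilbert--Schmidt estimate in \Cref{HSconv}. Writing $y = K^Y g = LKg$ and using the given form $y_N = L\Pi_N^X K g$ from \eqref{yN}, the approximation error is
\[
  y - y_N = LKg - L\Pi_N^X Kg = \bigl(LK - L\Pi_N^X K\bigr) g .
\]
By \eqref{HSnormLPiXK} of \Cref{HSconv}, the operator $LK - L\Pi_N^X K$ has finite Hilbert--Schmidt norm (hence is bounded on $S$), with
\[
  \bigl\| LK - L\Pi_N^X K \bigr\|_{\mathrm{HS}(S,Y)}^2 = \sum_{k > N} \sigma_k^2 \, \| L\varphi_k \|_Y^2 .
\]
Therefore $\| y - y_N \|_Y \le \bigl\| LK - L\Pi_N^X K \bigr\|_{\mathrm{HS}(S,Y)} \, \| g \|_S$, and it remains only to send $N \to \infty$ on the right-hand side.

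For the convergence, \Cref{HSconv} already records that the Hilbert--Schmidt error in \eqref{HSnormLPiXK} tends to zero as the truncation index grows; if one wants a self-contained argument, note that $K^Y = LK$ is Hilbert--Schmidt (it has finite rank in the discrete case, and in the continuous case \Cref{HS} together with the main assumption $\{Lw_j\} \subset L^2(\mathcal{O};Y)$ gives $\|K^Y\|_{\mathrm{HS}(S,Y)}^2 = \sum_{j=1}^m \|Lw_j\|_{L^2(\mathcal{O};Y)}^2 < \infty$), and expanding $\|K^Y\|_{\mathrm{HS}(S,Y)}^2$ in the orthonormal basis $\{f_k\}$ of $S$ using $Kf_k = \sigma_k \varphi_k$ when $\sigma_k > 0$ and $Kf_k = 0$ otherwise (by \Cref{lem1}) yields $\|K^Y\|_{\mathrm{HS}(S,Y)}^2 = \sum_{k \ge 1} \sigma_k^2 \|L\varphi_k\|_Y^2 < \infty$. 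In either case the tail $\sum_{k>N} \sigma_k^2 \|L\varphi_k\|_Y^2 \to 0$ as $N$ increases, so $y_N \to y$ in $Y$.

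There is no substantial obstacle here, since the real work has already been carried out in \Cref{HSconv}; the proof is basically a one-line corollary. The only points worth a word of care are bookkeeping ones: that $y_N = L\Pi_N^X K g$ is meaningful because $X_N = \mathrm{span}\{\varphi_k\}_{k=1}^N \subset \mathcal{D}(L)$ when $\sigma_N > 0$ (via \eqref{eqn:PODmodes_domL}), and that when $s_X < \infty$ the statement is trivial since then $y_N = y$ for every $N \ge s_X$, so that the estimate above is genuinely needed only in the case $s_X = \infty$.
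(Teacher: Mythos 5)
Your proposal is correct and matches the paper's own argument: the paper likewise writes $\|y_N - y\|_Y = \|LKg - L\Pi_N^X Kg\|_Y \leq \|LK - L\Pi_N^X K\|_{\mathrm{HS}(S,Y)}\|g\|_S$ and concludes by \Cref{HSconv}. Your extra remarks on why the Hilbert--Schmidt tail vanishes and on the well-definedness of $y_N$ are sound but not needed beyond what \Cref{HSconv} already provides.
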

\begin{proof}
	As $N $ increases,
	$$ \|y_N - y \|_Y = \|LKg - LK_N g \|_Y \leq \|LK -  L \Pi_N^XK\|_{\mathrm{HS}(S,Y)} \|g\|_S \to 0$$
	by \Cref{HSconv}.
\end{proof}

%To simplify notation, if $x \in \mathcal{R}(K)$ so that $ x = K u $ for some $ u \in S $, define $x_N = K_N u = \Pi_N^X Ku$. We know for these $x$ we have
%$\|x_N - x\|_X \to 0$ as $N $ increases.  Similarly, if $y \in \mathcal{R}(K^Y)$ so that $ y = K^Y g = L K g $ for some $ g \in S $, then define
%\begin{equation}\label{yN}
%  y_N = LK_N g = L \Pi_N^X Kg.
%\end{equation}
%As $N $ increases,
%$$ \|y_N - y \|_Y = \|LKg - LK_N g \|_Y \leq \|LK -  L \Pi_N^XK\|_{\mathrm{HS}(S,Y)} \|g\|_S \to 0$$
%by \eqref{HSnormLPiXK} in \Cref{HSconv}. 
%

Recall from \Cref{prob4} that $s_Y$ is always less than or equal to $s_X$. Thus if we assume $s_X < \infty$, we know that $s_Y < \infty$. For the following proofs, we consider whether $s_X$ is finite or infinite. 
\begin{theorem}\label{contyconvergence}
	%\label{rangeconvergence1}
	Assume $ \{\Pi_r^Y\}$ is uniformly bounded in operator norm whenever $ s_X = \infty $.  If $y = K^Y g$ for some $g \in S$, then $ \Pi _r^Y y \to y $ as $ r $ increases and the following error bound holds:
	\begin{equation}\label{eqn:PirY_error_bound}
	\|\Pi_r^Y y - y\|_Y \leq \sum_{k>r} \sigma_k |(g,f_k)_{S}| \, \|\Pi _r^Y L \varphi_k - L \varphi_k\|_Y.
	%
	%\sum_{k=r+1}^{s_X} \sigma_{k} \|g\|_{\mathbb{K}^s} \,  \|\Pi _r^Y L \varphi_k - L \varphi_k\|_Y
	\end{equation}
	Also, if the POD eigenvalues for the data $ \{ L w_j \} $ are all nonzero, then $ \Pi _r^Y y \to y $ for all $ y \in Y $.
\end{theorem}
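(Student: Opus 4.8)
The plan is to combine the truncation device of \Cref{lemma:yN_simple_lemma} with the boundedness of the projection $\Pi_r^Y$ and the Hilbert--Schmidt estimates of \Cref{HSconv}. Write $y = K^Y g = LKg$ and, using the singular value decomposition of $K$, define the truncations
\begin{equation*}
y_N := L \Pi_N^X K g = \sum_{k=1}^N \sigma_k (g, f_k)_S \, L \varphi_k .
\end{equation*}
The final sum is finite, so $L$ passes through it with no additional assumptions, and $y_N$ is exactly the quantity \eqref{yN} from \Cref{lemma:yN_simple_lemma}; hence $y_N \to y$ in $Y$ as $N \to \infty$.

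Now fix $r$ (with $\sigma_r > 0$) and take $N > r$. Since $\Pi_r^Y$ is a projection onto $Y_r = \mathrm{span}\{ L \varphi_k \}_{k=1}^r$, we have $\Pi_r^Y L \varphi_k = L \varphi_k$ for $k \le r$, so the first $r$ terms cancel and
\begin{equation*}
\Pi_r^Y y_N - y_N = \sum_{k=r+1}^N \sigma_k (g, f_k)_S \big( \Pi_r^Y L \varphi_k - L \varphi_k \big),
\end{equation*}
whence $\| \Pi_r^Y y_N - y_N \|_Y \le \sum_{k=r+1}^N \sigma_k | (g, f_k)_S | \, \| \Pi_r^Y L \varphi_k - L \varphi_k \|_Y$ by the triangle inequality. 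Letting $N \to \infty$: $\Pi_r^Y$ is bounded, so $\Pi_r^Y y_N \to \Pi_r^Y y$, and combined with $y_N \to y$ and continuity of the norm this yields the error bound \eqref{eqn:PirY_error_bound}.

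For the convergence $\Pi_r^Y y \to y$ as $r$ increases, I would bound the right-hand side of \eqref{eqn:PirY_error_bound} by Cauchy--Schwarz and \eqref{HSnormPiPsiLK}:
\begin{equation*}
\sum_{k>r} \sigma_k |(g,f_k)_S| \, \| \Pi_r^Y L \varphi_k - L \varphi_k \|_Y \le \|g\|_S \bigg( \sum_{k>r} \sigma_k^2 \| \Pi_r^Y L \varphi_k - L \varphi_k \|_Y^2 \bigg)^{1/2} = \|g\|_S \, \| LK - \Pi_r^Y LK \|_{\mathrm{HS}(S,Y)} ,
\end{equation*}
which tends to zero as $r \to \infty$ by \Cref{HSconv} --- using the uniform boundedness of $\{\Pi_r^Y\}$ when $s_X = \infty$, and noting the sum is empty once $r$ reaches $s_X$ when $s_X < \infty$. (Equivalently, one can argue directly from $\Pi_r^Y y - y = (\Pi_r^Y LK - LK) g$ together with $\| \Pi_r^Y LK - LK \| \le \| \Pi_r^Y LK - LK \|_{\mathrm{HS}(S,Y)}$.) Finally, if every POD eigenvalue for $\{ L w_j \}$ is nonzero, then \Cref{prob4}, \Cref{prob4_part2}, gives $Y = \overline{\mathcal{R}(K^Y)}$, so $\mathcal{R}(K^Y)$ is dense in $Y$; since we have just shown $\Pi_r^Y y \to y$ for every $y \in \mathcal{R}(K^Y)$ and $\{\Pi_r^Y\}$ is uniformly bounded in operator norm (by hypothesis if $s_X = \infty$, and automatically if $s_X < \infty$ since then only finitely many such projections arise), the standard density-plus-uniform-boundedness argument extends the convergence to all $y \in Y$.

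The main obstacle --- and the reason for the truncation device --- is that $L$ may be unbounded, so one cannot legitimately apply $L$ term by term to the infinite SVD series for $Kg$; everything must be done with the finite truncations $y_N$ and then passed to the limit using only the boundedness of $\Pi_r^Y$ and the Hilbert--Schmidt convergence from \Cref{HSconv}. A secondary point requiring care is the bookkeeping between the cases $s_X < \infty$ and $s_X = \infty$, since the uniform-boundedness hypothesis on $\{\Pi_r^Y\}$ is imposed only in the latter.
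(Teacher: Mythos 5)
Your proposal is correct and follows essentially the same route as the paper's proof: the truncation $y_N = L\Pi_N^X Kg$ from \Cref{lemma:yN_simple_lemma}, the cancellation $\Pi_r^Y L\varphi_k = L\varphi_k$ for $k \le r$, Cauchy--Schwarz combined with \eqref{HSnormPiPsiLK}, and density plus uniform boundedness for the final claim. The only (harmless) differences are that you run the truncation argument uniformly in both cases rather than treating $s_X < \infty$ by a direct finite-sum computation, and your Cauchy--Schwarz bound keeps $\|g\|_S$ where the paper retains the tail $\big(\sum_{k>r}|(g,f_k)_S|^2\big)^{1/2}$, which still suffices since \Cref{HSconv} alone drives the convergence.
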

\begin{proof}
	First consider the case $s_X < \infty$, and fix $ r $.  Assume $y = K^Y g = LKg$ for some $g \in S$.  Thus,
	\begin{equation}\label{eqn:represent_y_Pi_rY_y}
	\Pi _r^Y y = \Pi _r^Y K^Y g = \Pi _r^Y LK g = \sum_{k=1}^{s_X} \sigma_k (g,f_k)_{S} \Pi _r^Y L \varphi_k,  \quad  \text{and}  \quad  y = \sum_{k=1}^{s_X} \sigma_k (g,f_k)_{S} L \varphi_k.
	\end{equation}
	%
	%$$\Pi _r^Y y = \Pi _r^Y K^Y g = \Pi _r^Y LK g = \sum_{k=1}^{s_X} \sigma_k (g,f_k)_{S} \Pi _r^Y L \varphi_k$$
	%and $$y = \sum_{k=1}^{s_X} \sigma_k (g,f_k)_{S} L \varphi_k.$$
	Subtracting gives
	$$\Pi _r^Y y - y = \sum_{k=1}^{s_X} \sigma_k (g,f_k)_{S} (\Pi _r^Y L \varphi_k - L \varphi_k)=\sum_{k=r+1}^{s_X} \sigma_k (g,f_k)_{S} (\Pi _r^Y L \varphi_k - L \varphi_k),$$
	since $\Pi_r^Y L \varphi_k = L \varphi_k$ for $ k = 1, ..., r$. 
	The error bound \eqref{eqn:PirY_error_bound} follows directly from this representation and the triangle inequality.  Furthermore, since $ s_X < \infty $, clearly $ \Pi_r^Y y \to y $ as $ r $ increases for each $ y \in \mathcal{R}(K^Y) $.
	
	Next, assume the POD eigenvalues for the data $ \{ L w_j \} $ are all nonzero.  By \Cref{prob4_part2} of \Cref{prob4}, since $ s_Y \leq s_X < \infty $ we have $ Y = \mathcal{R}(K^Y) $.  This gives $ \Pi _r^Y y \to y $ for all $ y \in Y $.
	
	%		Then
	%		\begin{align*}
	%		\|\Pi _r^Y y - y\|_Y &\leq \sum_{k>r}\|\sigma_k (g,f_k)_{S} (\Pi _r^Y L \varphi_k - L \varphi_k)\|_Y \\
	%		&\leq \sum_{k>r} \sigma_k |(g,f_k)_{S}| \, \|\Pi _r^Y L \varphi_k - L \varphi_k\|_Y. 
	%		\end{align*}
	
	Now consider the case $s_X = \infty$, and fix $ r $.  For $y = K^Y g = LKg$ with $ g \in S $ as above, recall the definition of $y_N = L \Pi_N^X Kg$ given in \eqref{yN}.  We have
	\begin{align*}
	\|\Pi_r^Y y - y \|_Y & \leq \|\Pi_r^Y y - \Pi_r^Y y_N \|_Y + \|\Pi_r^Y y_N -y_N\|_Y + \|y_N - y \|_Y \\
	& \leq \left( \| \Pi_r^Y \| + 1 \right) \| y - y_N \|_Y + \|\Pi_r^Y y_N -y_N\|_Y.
	\end{align*}
	Note that for the second term, $\|\Pi_r^Y y_N -y_N\|_Y$, we can obtain representations for $ \Pi_r^Y y_N $ and $ y_N $ similar to that in \eqref{eqn:represent_y_Pi_rY_y} above.  Proceeding in the same way gives		
	% is just an application of the $s_X < \infty$ case above. Thus
	$$\|\Pi_r^Y y_N -y_N\|_Y \leq \sum_{k=r+1}^{N} \sigma_k |(g,f_k)_{S}| \, \|\Pi _r^Y L \varphi_k - L \varphi_k\|_Y.$$
	Since $r$ is fixed and $ y_N \to y $ as $ N \to \infty $ (\Cref{lemma:yN_simple_lemma}), the two inequalities above give
	%\begin{equation*}
	%\|\Pi_r^Y y - \Pi_r^Y y_N \|_Y \leq \|\Pi_r^Y\|_{\mathcal{L}(Y)} \|y-y_N\|_Y  = M\|y-y_N\|_Y.
	%\end{equation*} 
	%This gives
	%\begin{equation*}
	%\|\Pi_r^Y y - y \|_Y \leq (M+1)\|y-y_N\|_Y + \sum_{k=r+1}^{N} \sigma_k |(g,f_k)_S| \, \|\Pi _r^Y L \varphi_k - L \varphi_k\|_Y. \\ 
	%\end{equation*}
	%Taking the limit as $N \to \infty$, 
	$$ \|\Pi_r^Y y - y \|_Y \leq \sum_{k=r+1}^{\infty} \sigma_k |(g,f_k)_{S}| \, \|\Pi _r^Y L \varphi_k - L \varphi_k\|_Y.$$
	For convergence, we have
	%\begin{align*}
	%\|\Pi _r^Y y - y\|_Y &\leq \sum_{k>r}\|\sigma_k (g,f_k)_{S} (\Pi _r^Y L \varphi_k - L \varphi_k)\|_Y \\
	%&\leq \sum_{k>r} \sigma_k |(g,f_k)_{S}| \, \|\Pi _r^Y L \varphi_k - L \varphi_k\|_Y \\
	%& \leq \left( \sum_{k>r} |(g,f_k)_{S}|^2  \right)^{1/2} \, \left( \sum_{k>r} \sigma_k^2 \|\Pi _r^Y L \varphi_k - L \varphi_k\|_Y^2  \right)^{1/2}. 
	%\end{align*}
	$$
	\|\Pi _r^Y y - y\|_Y  \leq  \left( \sum_{k>r} |(g,f_k)_{S}|^2  \right)^{1/2} \, \left( \sum_{k>r} \sigma_k^2 \|\Pi _r^Y L \varphi_k - L \varphi_k\|_Y^2  \right)^{1/2}.
	$$
	Since $ \{ f_k \} $ is an orthonormal basis for $ S $, we know $ \sum_{k>r} |(g,f_k)_{S}|^2 $ goes to zero as $r$ increases by Parseval's equality. Furthermore, since $ \{ \Pi_r^Y \} $ is uniformly bounded, \Cref{HSconv} gives that $\sum_{k>r} \sigma_k^2 \|\Pi _r^Y L \varphi_k - L \varphi_k\|_Y^2 $ goes to zero as $r$ increases.  This gives $ \Pi_r^Y y \to y $ for each $ y \in \mathcal{R}(K^Y) $. 
	
	Finally, assume the POD eigenvalues for the data $ \{ L w_j \} $ are all nonzero.  By \Cref{prob4_part2} of \Cref{prob4}, we have $ \mathcal{R}(K^Y) $ is dense in $ Y $.  Since $ \{ \Pi_r^Y \} $ is uniformly bounded, the principle of uniform boundedness gives $ \Pi _r^Y y \to y $ for all $ y \in Y $.
\end{proof}

For the next two results we need to assume $ L $ or $ L^{-1} $ is bounded whenever $ s_X = \infty $.
%
% \Cref{boundedext} provides the conditions which give us the boundedness of this projection. 
%
\begin{theorem}\label{weirdprojX}
	Assume $s_X < \infty$, or either $ L $ or $ L^{-1} $ is bounded. If $y = K^Y g$ for some $g \in S$, then
	\begin{equation}\label{eqn:weirdprojX_error_bound}
	\|y - L \Pi_r^X L^{-1} y \|_Y \leq \sum_{k > r} \sigma_k\, |(g,f_k)_{S}| \, \|L\varphi_k\|_Y
	\end{equation}
	and the error converges to zero as $r$ increases.  Now assume $ \{L \Pi_r^X L^{-1}\}$ is uniformly bounded in operator norm whenever $ s_X = \infty $.  If the POD eigenvalues for the data $ \{ L w_j \} $ are all nonzero, then $ L \Pi_r^X L^{-1} y \to y $ for all $ y \in Y $.	 
\end{theorem}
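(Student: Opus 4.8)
The plan is to follow the structure of the proof of \Cref{contyconvergence}, using the elementary observation that when $y = K^Y g = LKg$ and $L$ is invertible we have $L^{-1}y = Kg$ exactly, so that $L\Pi_r^X L^{-1}y = L\Pi_r^X Kg$. I would first handle the case $s_X < \infty$. Here $Kg = \sum_{k=1}^{s_X}\sigma_k(g,f_k)_S\varphi_k$ is a finite sum, so $L\Pi_r^X L^{-1}y = L\Pi_r^X Kg = \sum_{k=1}^{r}\sigma_k(g,f_k)_S L\varphi_k$ and $y = LKg = \sum_{k=1}^{s_X}\sigma_k(g,f_k)_S L\varphi_k$ (pulling $L$ through finite sums as in \Cref{discreteerror}); subtracting and applying the triangle inequality gives \eqref{eqn:weirdprojX_error_bound}, and since the sum on the right is finite the error tends to zero as $r$ increases (and vanishes once $r = s_X$).

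Next I would treat $s_X = \infty$, splitting into the two sub-cases in the hypothesis. If $L$ is bounded, then $L$ passes through the norm-convergent expansion $Kg = \sum_{k\ge 1}\sigma_k(g,f_k)_S\varphi_k$, so $y = \sum_{k\ge 1}\sigma_k(g,f_k)_S L\varphi_k$ and $L\Pi_r^X L^{-1}y = \sum_{k=1}^{r}\sigma_k(g,f_k)_S L\varphi_k$; the triangle inequality again yields \eqref{eqn:weirdprojX_error_bound}, the right-hand side being finite by Cauchy--Schwarz together with $\sum_{k\ge1}\sigma_k^2\|L\varphi_k\|_Y^2 = \|LK\|_{\mathrm{HS}(S,Y)}^2 < \infty$. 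If instead $L^{-1}$ is bounded (but $L$ may be unbounded), I would use the finite-rank approximations $y_N = L\Pi_N^X Kg$ of \Cref{lemma:yN_simple_lemma}. The key point is that $L^{-1}y_N = \Pi_N^X Kg$, hence $L\Pi_r^X L^{-1}y_N = L\Pi_r^X Kg = L\Pi_r^X L^{-1}y$ for every $N \ge r$; therefore $\|y - L\Pi_r^X L^{-1}y\|_Y \le \|y - y_N\|_Y + \|y_N - L\Pi_r^X L^{-1}y_N\|_Y$, and since $y_N = \sum_{k=1}^{N}\sigma_k(g,f_k)_S L\varphi_k$ is a finite sum the second term is at most $\sum_{k=r+1}^{N}\sigma_k|(g,f_k)_S|\,\|L\varphi_k\|_Y$. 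Letting $N \to \infty$ and invoking $y_N \to y$ from \Cref{lemma:yN_simple_lemma} produces \eqref{eqn:weirdprojX_error_bound}.

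In all cases, convergence of the error to zero then follows from Cauchy--Schwarz: $\|y - L\Pi_r^X L^{-1}y\|_Y \le \big(\sum_{k>r}|(g,f_k)_S|^2\big)^{1/2}\big(\sum_{k>r}\sigma_k^2\|L\varphi_k\|_Y^2\big)^{1/2}$, where the first factor tends to zero by Parseval's equality (as $\{f_k\}$ is an orthonormal basis of $S$) and the second factor tends to zero by \eqref{HSnormLPiXK} of \Cref{HSconv}. For the final statement, the hypothesis that the POD eigenvalues for $\{Lw_j\}$ are all nonzero gives, via \Cref{prob4_part2} of \Cref{prob4}, that $\overline{\mathcal{R}(K^Y)} = Y$ (and $\mathcal{R}(K^Y) = Y$ when $s_X < \infty$, since then $s_Y \le s_X < \infty$). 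When $s_X < \infty$ every $y \in Y$ is thus of the form $K^Y g$ and the conclusion is immediate; when $s_X = \infty$, the assumed uniform boundedness of $\{L\Pi_r^X L^{-1}\}$ together with convergence on the dense subspace $\mathcal{R}(K^Y)$ yields $L\Pi_r^X L^{-1}y \to y$ for all $y \in Y$ by the principle of uniform boundedness. I expect the main obstacle to be the $s_X = \infty$ sub-case with $L^{-1}$ bounded but $L$ unbounded, where $L$ cannot be moved through infinite sums and one must argue through the finite-rank approximants $y_N$ and the stabilization identity $L\Pi_r^X L^{-1}y = L\Pi_r^X L^{-1}y_N$ for $N \ge r$.
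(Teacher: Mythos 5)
Your proof is correct and follows the paper's overall strategy (explicit SVD representation when $s_X<\infty$; the finite-rank approximants $y_N = L\Pi_N^X Kg$ when $s_X=\infty$; Cauchy--Schwarz plus Parseval plus \Cref{HSconv} for convergence; density from \Cref{prob4} plus uniform boundedness for the final claim). The one genuine difference is in the $s_X=\infty$ case, and it is an improvement. The paper's proof splits $\|y - L\Pi_r^X L^{-1}y\|_Y$ into three terms and must estimate the third term $\|L\Pi_r^X L^{-1}y_N - L\Pi_r^X L^{-1}y\|_Y$ separately under each boundedness hypothesis (using $\|L\|\,\|\Pi_r^X\|$ when $L$ is bounded, or the boundedness of $L\Pi_r^X L^{-1}$ from \Cref{boundedext} when $L^{-1}$ is bounded). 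You observe instead that $L^{-1}y_N = \Pi_N^X Kg$ and $\Pi_r^X\Pi_N^X = \Pi_r^X$ for $N\ge r$, so $L\Pi_r^X L^{-1}y_N = L\Pi_r^X Kg = L\Pi_r^X L^{-1}y$ exactly; the third term vanishes identically and only two terms remain. This is cleaner, and it means the error bound \eqref{eqn:weirdprojX_error_bound} on $\mathcal{R}(K^Y)$ actually holds without invoking boundedness of $L$ or $L^{-1}$ at all (your separate ``$L$ bounded'' subcase, where you pull $L$ through the infinite series, is then not even needed for the bound, though it is a valid alternative). The remaining steps --- Cauchy--Schwarz with \eqref{HSnormLPiXK}, and the density-plus-uniform-boundedness argument for $L\Pi_r^X L^{-1}y\to y$ on all of $Y$ --- match the paper.
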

%
%Note that if all the POD eigenvalues for the data $ \{ Lw_j \}_{j=1}^s $ are all nonzero and $s_X < \infty$, then the condition $y = K^Y g$ is automatically satisfied. However, if $s_X = \infty$, we can only show that the result holds for $y \in \mathcal{R}(K^Y)$.
%
\begin{proof}
	Let $ y = LKg$ for some $g \in S$, assume $s_X < \infty$, and fix $ r $.  As in the proof of \Cref{contyconvergence}, it can be shown that
	\begin{align*}
	y - L \Pi_r^X L^{-1} y &= L K g - L \Pi_r^X K g \\
	&= \sum_{k=1}^{s_X} \sigma_k (g,f_k)_{S} L \varphi_k - \sum_{k=1}^{r} \sigma_k (g,f_k)_{S} L \varphi_k \\
	&= \sum_{k=r+1}^{s_X} \sigma_k (g,f_k)_{S} L \varphi_k.
	\end{align*}
	The triangle inequality gives the error bound \eqref{eqn:weirdprojX_error_bound}.  The convergence results for the case $ s_X < \infty $ follow just as in the proof of \Cref{contyconvergence}.
	
	%Then for all $y \in Y$ there exists $g\in S$ such that $$y=K^Yg=LKg = \sum_{k=1}^{s_X} \sigma_k (g,f_k)_{S} L\varphi_k. $$ Then $$L \Pi _r^X L^{-1} y = L \Pi _r^X K g = \sum_{k=1}^{r} \sigma_k (g,f_k)_{S} L\varphi_k.$$ Thus,
	%\begin{align*}
	%\|L \Pi_r^X L^{-1} y -y \|_Y &= \|L \Pi _r^X K g - LKg \|_Y \\
	%& \leq \sum_{k=r+1}^{s_X} \sigma_k\, |(g,f_k)_{S}| \, \|L\varphi_k\|_Y. \\
	%\end{align*} 
	%We know this error converges as $r$ increases by \Cref{HSnormLPiXK} from \Cref{HSconv}.
	
	Now consider the case $s_X = \infty$, assume $ y = LKg$ for some $g \in S$, and fix $ r $. Then for $y_N = L \Pi_N^X Kg$ as in \eqref{yN}, with $ N \geq r $, we have
	$$ \|y - L \Pi_r^X L^{-1} y \|_Y \leq \|y - y_N\|_Y + \|y_N - L \Pi_r^X L^{-1} y_N \|_Y + \| L \Pi_r^X L^{-1} y_N - L \Pi_r^X L^{-1} y \|_Y.$$
	\Cref{lemma:yN_simple_lemma} implies that the first term tends to zero as $ N \to \infty $.  For the second term, proceed as above and use $ \Pi_r^X \Pi_N^X = \Pi_r^X $ (since $ N \geq r $) to show
	$$\| y_N - L \Pi_r^X L^{-1} y_N \|_Y \leq \sum_{k=r+1}^{N} \sigma_k\, |(g,f_k)_{S}| \, \|L\varphi_k\|_Y.$$
	For the third term, first assume $ L $ is bounded.  In this case,
	$$
	\| L \Pi_r^X L^{-1} y_N - L \Pi_r^X L^{-1} y \|_Y  \leq  \| L \| \| \Pi_r^X \| \| L^{-1} y_N - L^{-1} y \|_X = \| L \| \| \Pi_r^X \| \| \Pi_N^X K g - K g \|_X,
	$$
	which converges to zero as $ N \to \infty $, since $ r $ is fixed.  If instead $ L^{-1} $ is bounded, then $ L \Pi_r^X L^{-1} $ is bounded by \Cref{boundedext} and so
	$$
	\| L \Pi_r^X L^{-1} y_N - L \Pi_r^X L^{-1} y \|_Y  \leq  \| L \Pi_r^X L^{-1} \| \| y_N - y \|_Y,
	$$
	which converges to zero as $ N \to \infty $ by \Cref{lemma:yN_simple_lemma}, again since $ r $ is fixed.  Combining the above results gives
	$$
	\|y - L \Pi_r^X L^{-1} y \|_Y  \leq  \sum_{k=r+1}^\infty \sigma_k\, |(g,f_k)_{S}| \, \|L\varphi_k\|_Y.
	$$
	
	For convergence, we proceed as in the proof of \Cref{contyconvergence}.  We have
	$$
	\|y - L \Pi_r^X L^{-1} y \|_Y  \leq  \left( \sum_{k>r} |(g,f_k)_{S}|^2  \right)^{1/2} \, \left( \sum_{k>r} \sigma_k^2 \| L \varphi_k\|_Y^2  \right)^{1/2}.
	$$
	We know $ \sum_{k>r} |(g,f_k)_{S}|^2 $ goes to zero as $r$ increases by Parseval's equality. Furthermore, \Cref{HSconv} gives that $\sum_{k>r} \sigma_k^2 \|L \varphi_k\|_Y^2 $ goes to zero as $r$ increases.  This implies $ L \Pi_r^X L^{-1} y \to y $ for each $ y \in \mathcal{R}(K^Y) $.  To show convergence for all $ y \in Y $, we again use \Cref{prob4_part2} of \Cref{prob4} and the principle of uniform boundedness.
	%
	%Also, 
	%$$\|L \Pi_r^X L^{-1} y - L \Pi_r^X L^{-1} y_N\|_Y \leq \|L \Pi_r^X L^{-1}\|_{\mathcal{L}(Y)} \|y-y_N\|_Y \leq M \|y-y_N\|_Y$$
	%since $L \Pi_r^X L^{-1}$ is bounded by \Cref{boundedext}.
	%Thus, 
	%$$\|L \Pi_r^X L^{-1} y -y \|_Y \leq (M+1) \|y - y_N \|_Y + \sum_{k=r+1}^{N} \sigma_k\, |(g,f_k)_{S}| \, \|L\varphi_k\|_Y.$$
	%Taking the limit as $N \to \infty$, 
	%$$\|L \Pi_r^X L^{-1} y -y \|_Y \to \sum_{k=r+1}^\infty \sigma_k\, |(g,f_k)_{S}| \, \|L\varphi_k\|_Y$$
	%since $\|y-y_N\|_Y \to 0$ as $N \to \infty$.
	%
\end{proof}

%	Let $s_X < \infty$ or $\mathcal{R}(L)$ be dense in $Y$. If $y = K^Y g$ for some $g \in S$, then $$\|L \Pi_r^X L^{-1} y -y \|_Y \leq \sum_{k=r+1}^{s_X} \sigma_k\, |(g,f_k)_{S}| \, \|L\varphi_k\|_Y$$ and the error converges to zero as $r$ increases. 

%Similarly to the last theorem, we need $s_X < \infty$, or $L^{-1} \Pi_r^Y L : X \to X$ to be bounded for the next theorem. We can use \Cref{boundedext} for the conditions to ensure boundedness. 

We omit the proof of the next result, as it is similar to the proof of the previous result, \Cref{weirdprojX}.  Note that in \Cref{weirdprojX} the error converges to zero for a fixed $ y \in \mathcal{R}(K^Y) $ without any additional assumptions.  In this next result, if $ s_X = \infty $ we need to require additional conditions to guarantee that the error converges to zero for a fixed $ x \in \mathcal{R}(K) $; these conditions come from \Cref{HSconv}.
\begin{theorem}\label{weirdprojpsi}
	Assume $s_X < \infty$ or either $L$ or $L^{-1}$ is bounded. If $x = Kg$ for some $g \in S$, then  
	\begin{equation}\label{eqn:weirdprojpsi_error_bound}
	\|x- L^{-1} \Pi_r^Y Lx\|_X \leq \sum_{k>r} \sigma_k  |(g,f_k)_{S}| \,  \|\varphi_k - L^{-1} \Pi_r^Y L \varphi_k\|_X.
	\end{equation}
	If $s_X <\infty$, the error converges to zero as $r$ increases. If $s_X = \infty$, then the error goes to zero as $r$ increases when either (i) $L^{-1}$ is bounded and $\{ \Pi_r^Y \}$ is uniformly bounded or (ii) $\{L^{-1}\Pi_r^Y L\}$ is uniformly bounded.  Now assume $ \{L^{-1} \Pi_r^Y L\}$ is uniformly bounded in operator norm whenever $ s_X = \infty $.  If the POD eigenvalues for the data $\{w_j\}$ are all nonzero, then $L^{-1} \Pi_r^Y L x \to x$ for all $x \in X$.
\end{theorem}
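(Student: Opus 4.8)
The plan is to follow the proof of \Cref{weirdprojX} almost verbatim, interchanging the roles of $X$ and $Y$: where that proof expanded $y = LKg$ in the modes $L\varphi_k$ and controlled the error of $L\Pi_r^X L^{-1}$, here I expand $x = Kg$ in the modes $\varphi_k$ and control the error of $L^{-1}\Pi_r^Y L$. The one change of ingredient is that I invoke the third Hilbert-Schmidt identity \eqref{HSnormL^{-1}PiPsiLK} of \Cref{HSconv} in place of the first; note that its two convergence clauses are exactly hypotheses (i) and (ii) in the statement.

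For the case $s_X < \infty$, fix $r$ and write $x = Kg = \sum_{k=1}^{s_X}\sigma_k(g,f_k)_S\varphi_k$. Since this sum is finite and each $\varphi_k \in \mathcal{D}(L)$, I may apply $L$ term by term, and then the linear map $L^{-1}\Pi_r^Y L$ term by term, using $L^{-1}\Pi_r^Y L\varphi_k = \varphi_k$ for $k \le r$ (because $\Pi_r^Y$ fixes $Y_r\ni L\varphi_k$ for those $k$, and $L$ is invertible). Subtracting gives
\[
x - L^{-1}\Pi_r^Y Lx = \sum_{k=r+1}^{s_X}\sigma_k(g,f_k)_S\bigl(\varphi_k - L^{-1}\Pi_r^Y L\varphi_k\bigr),
\]
and the triangle inequality yields \eqref{eqn:weirdprojpsi_error_bound}. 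As the sum is finite the bound is eventually zero, so the error converges to zero as $r$ increases; and if moreover all POD eigenvalues for $\{w_j\}$ are nonzero then $X = \mathcal{R}(K)$ by \Cref{prob4_part1} of \Cref{prob4}, so convergence holds for every $x \in X$.

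For $s_X = \infty$ (with $L$ or $L^{-1}$ bounded, as the standing hypothesis requires), I would run the truncation argument. Put $x_N := \Pi_N^X Kg$ and, for $N \ge r$, split
\[
\|x - L^{-1}\Pi_r^Y Lx\|_X \le \|x - x_N\|_X + \|x_N - L^{-1}\Pi_r^Y L x_N\|_X + \|L^{-1}\Pi_r^Y L(x_N - x)\|_X.
\]
The first term tends to zero since $\Pi_N^X x \to x$; the middle term is treated exactly as in the finite case, producing the partial sum $\sum_{k=r+1}^N\sigma_k|(g,f_k)_S|\,\|\varphi_k - L^{-1}\Pi_r^Y L\varphi_k\|_X$. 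For the third term: if $L$ is bounded then $L^{-1}\Pi_r^Y L$ is bounded by the third part of \Cref{boundedext}, giving a bound $c\,\|x_N - x\|_X \to 0$; if instead $L^{-1}$ is bounded, then $\|L^{-1}\Pi_r^Y L(x_N - x)\|_X \le \|L^{-1}\|\,\|\Pi_r^Y\|\,\|L\Pi_N^X Kg - LKg\|_Y \to 0$ by \eqref{HSnormLPiXK} of \Cref{HSconv}. Letting $N \to \infty$ with $r$ fixed gives the infinite-sum form of \eqref{eqn:weirdprojpsi_error_bound}. For convergence in $r$, apply Cauchy-Schwarz:
\[
\|x - L^{-1}\Pi_r^Y Lx\|_X \le \Bigl(\sum_{k>r}|(g,f_k)_S|^2\Bigr)^{1/2}\Bigl(\sum_{k>r}\sigma_k^2\|\varphi_k - L^{-1}\Pi_r^Y L\varphi_k\|_X^2\Bigr)^{1/2};
\]
the first factor $\to 0$ by Parseval, and the second factor equals $\|K - L^{-1}\Pi_r^Y LK\|_{\mathrm{HS}(S,X)}$, which $\to 0$ under (i) or (ii) by \eqref{HSnormL^{-1}PiPsiLK} of \Cref{HSconv}. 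This proves $L^{-1}\Pi_r^Y Lx \to x$ for every $x \in \mathcal{R}(K)$; since $\mathcal{R}(K)$ is dense in $X$ when all POD eigenvalues for $\{w_j\}$ are nonzero (\Cref{prob4_part1} of \Cref{prob4}) and $\{L^{-1}\Pi_r^Y L\}$ is then assumed uniformly bounded, the Banach-Steinhaus theorem upgrades this to all $x \in X$.

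I expect the main obstacle to be precisely the third term of the split in the $s_X = \infty$ case: because $L^{-1}\Pi_r^Y L$ is generically unbounded, showing $\|L^{-1}\Pi_r^Y L(x_N - x)\|_X \to 0$ forces one to route the estimate through whichever of $L$, $L^{-1}$ is assumed bounded and to lean on the Hilbert-Schmidt convergence of the truncated POD operator; everything else is bookkeeping parallel to \Cref{weirdprojX}.
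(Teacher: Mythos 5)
Your proposal is correct and is essentially the argument the paper intends: the paper omits this proof, stating only that it is similar to that of \Cref{weirdprojX}, and your adaptation (finite-rank expansion, the truncation $x_N = \Pi_N^X Kg$ with the third term routed through whichever of $L$, $L^{-1}$ is bounded via \Cref{boundedext} and \Cref{HSconv}, then Cauchy--Schwarz plus \eqref{HSnormL^{-1}PiPsiLK} and Banach--Steinhaus) fills it in faithfully.
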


To be complete, we give an exact error formula and an error bound for approximations of elements in the range of $ K $ using the POD projection $ \Pi_r^X $.  This result gives an error bound for approximating each individual data snapshot in the discrete case.
\begin{theorem}\label{error_bound_rangeK}
	If $ x = K g $ for some $ g \in S $, then
	\begin{equation}\label{eqn:error_bound_rangeK_general}
	  \| x - \Pi_r^X x \|_X  =  \left(  \sum_{k>r}  \sigma_k^2 | (g,f_k)_S |^2 \right)^{1/2}  \leq  \sigma_{r+1} \| g \|_S.
	\end{equation}
	Also, in the discrete case, for each $ \ell = 1, \ldots, s $ we have
	\begin{equation}\label{eqn:error_bound_rangeK_snapshot}
	  \| w_\ell - \Pi_r^X w_\ell \|_X  \leq  \gamma_\ell^{-1/2} \sigma_{r+1}.
	\end{equation}
\end{theorem}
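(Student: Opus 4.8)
The plan is to read everything off the singular value decomposition \eqref{eqn:POD_operator_SVD} of $K$ together with the orthonormality of $\{\varphi_k\}$ and $\{f_k\}$. First I would write $x = Kg = \sum_{k\geq 1}\sigma_k (g,f_k)_S \varphi_k$, and then compute $(x,\varphi_j)_X = \sigma_j (g,f_j)_S$ by expanding and using $(\varphi_k,\varphi_j)_X = \delta_{kj}$. Substituting into the definition \eqref{Pi^Xdef} of $\Pi_r^X$ gives $\Pi_r^X x = \sum_{k=1}^{r}\sigma_k (g,f_k)_S \varphi_k$, so that $x - \Pi_r^X x = \sum_{k>r}\sigma_k (g,f_k)_S \varphi_k$. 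Since the $\varphi_k$ are orthonormal, the Pythagorean identity yields $\|x - \Pi_r^X x\|_X^2 = \sum_{k>r}\sigma_k^2 |(g,f_k)_S|^2$, which is the claimed equality in \eqref{eqn:error_bound_rangeK_general}.

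For the inequality, I would use that the singular values are ordered decreasingly, so $\sigma_k \leq \sigma_{r+1}$ for every $k \geq r+1$; pulling this factor out of the sum and applying Bessel's inequality $\sum_{k>r}|(g,f_k)_S|^2 \leq \|g\|_S^2$ gives $\|x - \Pi_r^X x\|_X \leq \sigma_{r+1}\|g\|_S$.

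For the discrete statement \eqref{eqn:error_bound_rangeK_snapshot}, the key observation is that each snapshot lies in $\mathcal{R}(K)$ with an explicit preimage: taking $g_\ell := \gamma_\ell^{-1} e_\ell \in S$, where $e_\ell$ is the $\ell$th standard unit vector, the definition \eqref{XPODdiscrete} gives $K g_\ell = \gamma_\ell \cdot \gamma_\ell^{-1} w_\ell = w_\ell$. Its weighted norm is $\|g_\ell\|_S^2 = \gamma_\ell\, |\gamma_\ell^{-1}|^2 = \gamma_\ell^{-1}$. Applying the first part of the theorem with $x = w_\ell$ and $g = g_\ell$ then gives $\|w_\ell - \Pi_r^X w_\ell\|_X \leq \sigma_{r+1}\|g_\ell\|_S = \gamma_\ell^{-1/2}\sigma_{r+1}$.

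There is essentially no obstacle here; the only points that need a little care are respecting the conjugate-linear-in-the-second-argument convention for $(\cdot,\cdot)_S$ when computing $(x,\varphi_j)_X$ and $\|g_\ell\|_S$, and invoking Bessel's inequality rather than Parseval's identity for $g$, since $\{f_k\}$ need not span $S$ unless $K$ is injective.
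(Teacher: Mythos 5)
Your proposal is correct and follows essentially the same route as the paper: expand $x = Kg$ via the SVD, use orthonormality of $\{\varphi_k\}$ to get the exact formula, bound $\sigma_k \le \sigma_{r+1}$ together with Bessel/Parseval for the inequality, and apply the result to $g_\ell = \gamma_\ell^{-1} e_\ell$ with $\|g_\ell\|_S = \gamma_\ell^{-1/2}$ for the snapshot bound. The only cosmetic difference is your invocation of Bessel's inequality where the paper cites Parseval's equality (the $\{f_k\}$ are in fact a complete orthonormal basis of $S$ as eigenvectors of $K^*K$, so either works).
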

%
%The error bound \eqref{eqn:error_bound_rangeK_snapshot} was obtained in \cite[Proposition 3.1]{Kostova-VassilevskaOxberry18} for $ X = \mathbb{R}^n $ and $ \gamma_\ell = 1 $ for all $ \ell $.

\begin{remark}
	The bound \eqref{eqn:error_bound_rangeK_snapshot} was obtained in \cite[Proposition 3.1]{Kostova-VassilevskaOxberry18} for $ X = \mathbb{R}^n $ and $ \gamma_\ell = 1 $ for all $ \ell $.  Recall the constants $ \{ \gamma_\ell \} $ are the positive weights in the definition of the POD operator $ K $ in the discrete case; see \Cref{discretebackground}.
\end{remark}
\begin{proof}
  	Using the SVD of $ K $ gives
  	$$
  	  x - \Pi_r^X x = \sum_{k>r} \sigma_k (g,f_k)_S \varphi_k.
  	$$
  	Since $ \| x - \Pi_r^X x \|_X^2 = ( x - \Pi_r^X x, x - \Pi_r^X x )_X $ and $ \{ \varphi_k \} $ is an orthonormal basis for $ X $, we immediately obtain the exact error formula in \eqref{eqn:error_bound_rangeK_general}.  To obtain the error bound in \eqref{eqn:error_bound_rangeK_general}, use $ \sigma_k \leq \sigma_{r+1} $ for all $ k > r $ and also Parseval's equality.
  	
  	Next, in the discrete case we have $ w_\ell = K g_\ell $ for each $ \ell = 1, \ldots, s $, where $ g_\ell = \gamma_\ell^{-1} e_\ell $ and $ e_\ell $ is the $ \ell $th standard unit vector for $ \mathbb{K}^s $, i.e., the $ \ell $th entry of $ e_\ell $ is one and all other entries are zero.  The error bound \eqref{eqn:error_bound_rangeK_snapshot} follows from $ \| g_\ell \|_S = \gamma_\ell^{-1/2} $ and \eqref{eqn:error_bound_rangeK_general}.
\end{proof}

In \Cref{error_bound_rangeK}, note that the quantity $ \gamma_\ell^{-1} $ appears in the error bound \eqref{eqn:error_bound_rangeK_snapshot} for approximating the snapshot $ w_\ell $.  However, in applications it is typical that each weight $ \gamma_\ell $ tends to zero as the number $ s $ of snapshots increases.  Next, we use the above results to prove various approximation error bounds for each individual snapshot $ w_\ell $ in the discrete case that do not depend on $ \gamma_\ell^{-1} $.  Here, the bounds are only valid if $ r $ is sufficiently large.  We note that these type of error bounds have been \textit{assumed} to hold in the literature; Iliescu and Wang made this type of assumption in \cite[Assumption 3.2]{IliescuWang14} (with $ \gamma_\ell = s^{-1} $ for all $ \ell $) in their analysis of a POD reduced order model of the Navier-Stokes equations, and many others have followed their approach.
\begin{corollary}\label{cor:error_bounds_each_snapshot}
  In the discrete case, if $ r $ is sufficiently large, then for each $ \ell = 1, \ldots, s $ we have
  \begin{subequations}
  \begin{align}
    \| w_\ell - \Pi_r^X w_\ell \|^2_X  &\leq  \sigma_{r+1}^2,\label{eqn:snapshot_error_bound1}\\
    \| L w_\ell - \Pi_r^Y L w_\ell \|^2_Y  &\leq  \sum_{k>r} \sigma_k^2 \| L \varphi_k - \Pi_r^Y L \varphi_k \|_Y^2,\label{eqn:snapshot_error_bound2}\\
	\| L w_\ell - L \Pi_r^X w_\ell \|^2_Y  &\leq  \sum_{k>r} \sigma_k^2 \| L \varphi_k \|_Y^2,\label{eqn:snapshot_error_bound3}\\
	\| w_\ell - L^{-1} \Pi_r^Y L w_\ell \|^2_X  &\leq  \sum_{k>r} \sigma_k^2 \| \varphi_k - L^{-1} \Pi_r^Y L \varphi_k \|_X^2.\label{eqn:snapshot_error_bound4}
  \end{align}
  \end{subequations}
\end{corollary}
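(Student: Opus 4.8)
The plan is to exploit the finite rank of $K$ in the discrete case: $s_X = \mathrm{rank}(K) \le s < \infty$, so $\sigma_k = 0$ for all $k > s_X$, and ``$r$ sufficiently large'' can be read as $r \ge s_X$. Under the standing main assumption (which requires $\sigma_r > 0$ whenever $\Pi_r^X$ or $\Pi_r^Y$ appears) the only such value is $r = s_X$. I will show that at this $r$ every right-hand side of \eqref{eqn:snapshot_error_bound1}--\eqref{eqn:snapshot_error_bound4} is an empty sum, hence zero, and every left-hand side vanishes as well, so that each inequality holds (trivially, with equality).

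First I would collect the snapshot estimates already proved. \Cref{error_bound_rangeK} gives, for every $r$ and every $\ell$, $\gamma_\ell\,\|w_\ell - \Pi_r^X w_\ell\|_X^2 \le \sigma_{r+1}^2$. \Cref{discrete1} gives, for every $r$, the exact identities
\begin{align*}
  \sum_{j=1}^s \gamma_j \|L w_j - \Pi_r^Y L w_j\|_Y^2 &= \sum_{k>r} \sigma_k^2 \|L\varphi_k - \Pi_r^Y L\varphi_k\|_Y^2, \\
  \sum_{j=1}^s \gamma_j \|L w_j - L\Pi_r^X w_j\|_Y^2 &= \sum_{k>r} \sigma_k^2 \|L\varphi_k\|_Y^2,
\end{align*}
together with the analogous identity for $\sum_{j=1}^s \gamma_j \|w_j - L^{-1}\Pi_r^Y L w_j\|_X^2$ when $L$ is invertible. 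Every summand on the left of these identities is nonnegative, so retaining only the $j = \ell$ term shows that for every $r$ and every $\ell$ the left-hand side of each of \eqref{eqn:snapshot_error_bound2}--\eqref{eqn:snapshot_error_bound4} is at most $\gamma_\ell^{-1}$ times the corresponding right-hand side (and likewise \eqref{eqn:snapshot_error_bound1} follows directly from \Cref{error_bound_rangeK}).

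The second step removes the $\gamma_\ell^{-1}$. Taking $r = s_X$ makes $\sigma_{r+1} = 0$ and makes each tail sum $\sum_{k>r}\sigma_k^2(\cdots)$ empty, so all four right-hand sides vanish; the estimates of the previous step then force the four left-hand sides to vanish too, which is the claim. Alternatively one can argue directly from the SVD representations \eqref{wj and Lwj}: at $r = s_X$ we have $\Pi_{s_X}^X w_\ell = w_\ell$ (which gives \eqref{eqn:snapshot_error_bound1} and \eqref{eqn:snapshot_error_bound3}) and $\Pi_{s_X}^Y L\varphi_k = L\varphi_k$, $L^{-1}\Pi_{s_X}^Y L\varphi_k = \varphi_k$ for $k = 1, \ldots, s_X$, so that $L w_\ell - \Pi_{s_X}^Y L w_\ell = 0$ and $w_\ell - L^{-1}\Pi_{s_X}^Y L w_\ell = 0$, which gives \eqref{eqn:snapshot_error_bound2} and \eqref{eqn:snapshot_error_bound4}.

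I do not expect a genuine obstacle; the only delicate point is the meaning of ``sufficiently large''. For small $r$ these bounds can genuinely fail, precisely because of the $\gamma_\ell^{-1}$ loss above together with the fact that in applications each weight $\gamma_\ell$ is typically far smaller than $1$; the safe regime in which all $s$ bounds hold simultaneously is exactly the one where the POD projections already reproduce the snapshot data, i.e.\ $r \ge s_X$, which under the main assumption means $r = s_X$.
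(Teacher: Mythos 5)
Your argument establishes only a degenerate instance of the corollary and misses the idea that gives it content. Reading ``$r$ sufficiently large'' as $r=s_X$ (the only value $\ge s_X$ compatible with the standing assumption $\sigma_r>0$), you observe that all four left- and right-hand sides vanish, so each inequality holds as $0\le 0$. That is technically true, but it is vacuous: the corollary exists precisely to justify the kind of per-snapshot bound \emph{assumed} in \cite[Assumption 3.2]{IliescuWang14} for moderate $r$ in reduced order modeling, where taking $r=s_X$ would defeat the purpose of the reduction. Your closing assertion that the safe regime is ``exactly'' $r\ge s_X$ is not established (and is false in general): the paper proves the bounds for all $r$ beyond a data-dependent threshold that has nothing to do with $s_X$ and is typically far smaller.

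The missing idea is how to remove the factor $\gamma_\ell^{-1}$ while keeping a nonzero right-hand side. The paper writes $w_\ell=Kg_\ell$ with $g_\ell=\gamma_\ell^{-1}e_\ell$ and invokes the \emph{pointwise} error bounds already proved for elements of $\mathcal{R}(K)$ and $\mathcal{R}(K^Y)$ (\Cref{error_bound_rangeK}, and \eqref{eqn:PirY_error_bound} of \Cref{contyconvergence} together with its analogues in \Cref{weirdprojX} and \Cref{weirdprojpsi}). Applying Cauchy--Schwarz to the sum over $k>r$ and then Parseval's equality produces the prefactor $\|g_\ell-\Pi_r^S g_\ell\|_S^2=\sum_{k>r}|(g_\ell,f_k)_S|^2$, where $\Pi_r^S$ is the orthogonal projection onto $\mathrm{span}\{f_k\}_{k=1}^r$; since $\{f_k\}$ is an orthonormal basis of $S$ and there are only finitely many snapshots, this prefactor is $\le 1$ for all sufficiently large $r$, which yields \eqref{eqn:snapshot_error_bound1}--\eqref{eqn:snapshot_error_bound4} with nontrivial right-hand sides. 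Your first step --- keeping only the $j=\ell$ term in the weighted identities of \Cref{discrete1} --- is exactly the computation that produces the unwanted $\gamma_\ell^{-1}$; the whole point of the corollary is to do better than that, and your second step avoids the issue only by collapsing to the endpoint where both sides are zero.
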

\begin{proof}
  We only prove \eqref{eqn:snapshot_error_bound2}; the proofs of the remaining inequalities are similar.  As in the proof of \Cref{error_bound_rangeK}, we know $ w_\ell = K g_\ell $ for each $ \ell = 1, \ldots, s $, where $ g_\ell = \gamma_\ell^{-1} e_\ell $.  Using the error bound \eqref{eqn:PirY_error_bound} in \Cref{contyconvergence}, the Cauchy-Schwarz inequality on the sum, and Parseval's inequality gives
  $$
    \| L w_\ell - \Pi_r^Y L w_\ell \|^2_Y  \leq  \| g_\ell - \Pi_r^S g_\ell \|_S^2  \sum_{k>r} \sigma_k^2 \| L \varphi_k - \Pi_r^Y L \varphi_k \|_Y^2,
  $$
  where $ \Pi_r^S : S \to S $ is the orthogonal projection onto $ S_r := \mathrm{span}\{ f_k \}_{k=1}^r $.  Since $ \{ f_k \}_{k\geq 1} $ is an orthonormal basis for $ S $, we know $ \Pi_r^S g_\ell \to g_\ell $ for $ \ell = 1, \ldots, s $.  Since $ s $ is fixed, for all sufficiently large $ r $ we have $ \| g_\ell - \Pi_r^S g_\ell \|_S \leq 1 $ for all $ \ell = 1, \ldots, s $, and this completes the proof.
\end{proof}

\section{More Examples}\label{Section:Examples}

We now consider a few additional examples. For all three examples we consider two separable Hilbert spaces, $H$ and $V$, where $V$ is a proper subset of $H$, and $V$ is both continuously embedded\footnote{i.e., there exists a constant $ C_V > 0 $ such that $ \| v \|_H \leq C_V \| v \|_V $ for all $ v \in V $} and dense in $H$.  The linear operator $L$ is a mapping between these two spaces.%  We have the data $\{w_j\}_{j=1}^s \subset H \cap V$ in the discrete case and $\{w_j\}_{j=1}^m \subset L^2(\mathcal{O};H) \cap L^2(\mathcal{O};V)$ in the continuous case.

For all three examples, we present results for the continuous case only.  We assume we have the data $\{w_j\}_{j=1}^m \subset L^2(\mathcal{O};H) \cap L^2(\mathcal{O};V)$.  Results for the discrete case can also be obtained using the theory in this work if desired.

The first two examples are from our previous work, \cite{Singler14}.  Due to the above assumption on the data, the POD operator $ K $ can be viewed as a mapping into $ H $ or a mapping into $ V $.  One can obtain the SVD of $ K : S \to H $ or the SVD of $ K : S \to V $, i.e., one can choose $ X = H $ or $ X = V $.  The different choices for $ X $ give different POD singular values, POD singular vectors, POD modes, and POD projections.  In \cite{Singler14}, we considered both choices for $ X $ and four different POD projections between these spaces and gave exact expressions for the POD data approximation errors in the two different Hilbert space norms.  We relate the notation and results for both the error formulas and pointwise convergence from the present work to \cite{Singler14}.  We obtain better pointwise convergence results in this work.  Also, $\mathcal{O}$ was only an interval in \cite{Singler14}, but now we have $\mathcal{O} $ is an open subset of $ \mathbb{R}^d$.  For these first two examples, $Y_r = \mathrm{span}\{L\varphi_k\}$ and $\Pi_r^Y: Y \to Y $ is the orthogonal projection onto $Y_r$. Note this implies $ \{\Pi_r^Y\} $ is uniformly bounded in operator norm. 

For the third example, we consider a case where $\Pi_r^Y$ is not an orthogonal projection.  In particular, we take $\Pi_r^Y$ to be a Ritz projection, as considered in \cite{IliescuWang13,Rubino18}.  All of our results for this case are new.%  We again relate the notation and data approximation error results.

%The first two examples will be from our previous work, \cite{Singler14}. In this paper, there were two Hilbert spaces, $H$ and $V$, $ V\subset H $, and $V$ not equal to $H$. For these examples we assume $V$ is both continuously embedded and dense in $H$. We have the data $\{w_j\}_{j=1}^s \subset H \cap V$ in the discrete case and $\{w_j\}_{j=1}^m \subset L^2(\mathcal{O};H) 
%\cap L^2(\mathcal{O};V)$ in the continuous case. Note that in \cite{Singler14}, $\mathcal{O}$ was only an interval, but now we have $\mathcal{O} \subset \mathbb{R}^d$. Four different POD projections were considered between these spaces and exact expressions were given for the POD data approximation errors in two different Hilbert space norms. We will relate the notation and results from this paper to that one. We will also show that the necessary assumptions hold. Further for the first two examples, $Y_r = \mathrm{span}\{L\varphi_k\}$ and $\Pi_r^Y: Y \to Y $ is the orthogonal projection onto $Y_r$. Note this implies that $\Pi_r^Y $ has the optimal approximation property and is uniformly bounded in operator norm. 

\subsection{Example 1}\label{example1}

For the first example, consider the case where $X = H$, $Y = V$, and $L:H \to V$ is defined by $Lv =v $ for all $v \in \mathcal{D}(L) =V$.  The operator $ L $ is clearly invertible, and $ L^{-1} : V \to H $ is given by $ L^{-1} v = v $ for all $ v \in V $.  Note that $L^{-1}: V \to H$ is bounded due to the continuous embedding assumption. Also, the inverse of a bounded operator is closed, so $L$ is closed.  Furthermore, the assumption on the data gives $ \{ w_j \} \subset L^2(\mathcal{O};X) $ and $ \{ Lw_j \} \subset L^2(\mathcal{O};Y) $. Thus, we know that both the main assumption and the $L^{-1}$ assumption hold.% The operator $L$ can be bounded or unbounded, depending on the dimensions of the spaces. Note that if $H$ and $V$ are both infinite dimensional, then $L$ is unbounded.  
% L is closed, but assuming we have infinite dimensional spaces \mathcal{D}(L) = Y = V is not closed in X = H.  (Check!)  Then the closed graph theorem shows L is unbounded.

Since $ X = H $ and each set of singular vectors of the POD operator $ K : S \to H $ are an orthonormal basis, we know the POD modes $ \{ \varphi_k \} $ are an orthonormal basis for $ H $.  Note that $ X_r = \mathrm{span}\{ \varphi_k \}_{k=1}^r \subset H $, and $ Y_r = \mathrm{span}\{ L \varphi_k \}_{k=1}^r = \mathrm{span}\{ \varphi_k \}_{k=1}^r \subset V $.  Furthermore, the POD modes $ \{ \varphi_k \} $ may not be orthogonal in $ V $.  Also, the operator $K^Y = L K $ is simply the POD operator $ K $ viewed as a mapping from $ S $ to $ V $.  We take $ \Pi_r^X :X \to X $ to be the orthogonal projection onto $ X_r $, and $ \Pi_r^Y :Y \to Y $ to be the orthogonal projection onto $ Y_r $.

In order to discuss the POD projections we pay special attention to the spaces under consideration. Since $V \subset H$, the projections can be considered as mappings from $V$ to $V$ or from $H$ to $H$. The projections considered in this work are related to the projections $ P_r^H $ and $ P_r^V $ in \cite[Definition 3.2]{Singler14} as follows:	
\begin{itemize}
	\item $\Pi_r^X: X \to X$ is equal to the orthogonal projection $P_r^H: H \to H$. 
	\item $\Pi_r^Y: Y \to Y$ is equal to the orthogonal projection $P_r^V: V \to V$. 
	\item $L \Pi_r^X L^{-1}: Y \to Y$ is equal to the operator $P_r^H: V \to V$.%, where these projections are non-orthogonal.
	\item $L^{-1} \Pi_r^Y L: X \to X$ is equal to the operator $P_r^V: H \to H$.%, where these projections are non-orthogonal.
\end{itemize} 	

Now that we have the relationships between the projections, we compare the results. The error formulas presented here in \Cref{conterror} are essentially the same as the results in \cite{Singler14}.  Again, the primary difference here is that $\mathcal{O} $ is an open subset of $ \mathbb{R}^d$ instead of an interval.  The POD data approximation errors from \Cref{conterror} become the following:
\begin{align}
\sum_{j=1}^{m} \int_\mathcal{O} \| w_j(t) - P _r^H w_j(t) \|^2_{V} dt  &=  \sum_{k>r} \sigma_k^2 \| \varphi_k \|^2_V,\label{eqn:ex1_error_formula_1}\\
\sum_{j=1}^{m} \int_\mathcal{O} \| w_j(t) -  P_r^V w_j(t) \|^2_{V} dt  &= \sum_{k>r} \sigma_k^2 \| \varphi_k - P_r^V \varphi_k \|^2_V,\label{eqn:ex1_error_formula_2}\\
\sum_{j=1}^{m} \int_\mathcal{O} \| w_j(t) - P_r^V w_j(t) \|^2_{H} dt  &= \sum_{k>r} \sigma_k^2 \| \varphi_k -P_r^V \varphi_k \|^2_H \label{eqn:ex1_error_formula_3}.
\end{align}
In this example, all three sums converge to zero as $r$ increases. 

A larger improvement from \cite{Singler14} can be seen in the results concerning pointwise convergence of POD projections.  To illustrate, we give the following result.
\begin{proposition}\label{translate convergence example 1}
	We have
	\begin{enumerate}
		\item $\|P_r^V y -y\|_V \to 0$ for all $y \in \mathcal{R}(K)$, and for $ y = K g $ we have
		$$
		\|P_r^V y -y\|_V \leq \sum_{k>r} \sigma_k |(g,f_k)_{S}| \, \|P_r^V \varphi_k - \varphi_k\|_V.
		$$
		\item If the POD eigenvalues for $\{w_j\} \subset L^2(\mathcal{O};V)$ are all nonzero, then $ P_r^V y \to y $ in both $ H $ and $ V $ for all $y \in V$.
		%	\item If $\|P_r^V y -y\|_V \to 0$ for all $y \in V$, $\|P_r^V x - x \|_H \to 0$ for all $x \in V$ 
		%	\item for all $x \in \mathcal{R}(K)$, $\|P_r^V x - x \|_H \to 0$
		\item $\|P_r^H y - y \|_V \to 0$ for all $y \in \mathcal{R}(K)$, and for $ y = K g $ we have
		$$
		\|y - P_r^H y \|_V \leq \sum_{k > r} \sigma_k\, |(g,f_k)_{S}| \, \|\varphi_k\|_V.
		$$
		\item $\|P_r^V x - x \|_H \to 0$ for all $x \in \mathcal{R}(K)$, and for $ x = K g $ we have
		$$
		\|x - P_r^V x \|_H \leq \sum_{k > r} \sigma_k\, |(g,f_k)_{S}| \, \|\varphi_k - P_r^V \varphi_k\|_H.
		$$
	\end{enumerate}
\end{proposition}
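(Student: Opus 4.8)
The plan is to verify that the hypotheses of \Cref{contyconvergence}, \Cref{weirdprojX}, and \Cref{weirdprojpsi} hold in the setting of Example~1 and then simply translate those conclusions into the $P_r^H$/$P_r^V$ notation using the dictionary established just before the proposition. The facts I would use repeatedly are: $L\varphi_k=\varphi_k$ for every $k$; $K^Y=LK$ is just $K$ regarded as a map into $V$, so $\mathcal{R}(K^Y)$ and $\mathcal{R}(K)$ are the same set of elements (the hypothesis $\{w_j\}\subset L^2(\mathcal{O};V)$ guarantees $\mathcal{R}(K)\subset V$); $L^{-1}:V\to H$ is bounded by the continuous embedding; $\Pi_r^Y=P_r^V$ is an orthogonal projection, hence $\{\Pi_r^Y\}$ is uniformly bounded in operator norm; and, under the dictionary, $L\Pi_r^XL^{-1}=P_r^H:V\to V$ and $L^{-1}\Pi_r^YL=P_r^V:H\to H$ with $L^{-1}\Pi_r^YL\varphi_k=P_r^V\varphi_k$.

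For item~1 I would apply \Cref{contyconvergence} with $y\in\mathcal{R}(K^Y)=\mathcal{R}(K)$: since $\{\Pi_r^Y\}$ is uniformly bounded, it yields $\Pi_r^Yy\to y$ in $V$, i.e.\ $P_r^Vy\to y$ in $V$, and the error bound \eqref{eqn:PirY_error_bound} becomes the displayed bound after substituting $\Pi_r^YL\varphi_k=P_r^V\varphi_k$ and $\|\cdot\|_Y=\|\cdot\|_V$. For item~2 the point to observe is that ``the POD eigenvalues for $\{w_j\}\subset L^2(\mathcal{O};V)$'' are exactly the POD eigenvalues of $K^Y=LK$; assuming these are all nonzero, the final assertion of \Cref{contyconvergence} gives $P_r^Vy\to y$ in $V$ for every $y\in V$, and convergence in $H$ follows from $\|P_r^Vy-y\|_H\le C_V\|P_r^Vy-y\|_V$.

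For item~3 I would invoke \Cref{weirdprojX}, whose hypothesis holds because $L^{-1}$ is bounded; for $y=Kg\in\mathcal{R}(K^Y)$ this gives $L\Pi_r^XL^{-1}y\to y$ in $V$, i.e.\ $P_r^Hy\to y$ in $V$, with \eqref{eqn:weirdprojX_error_bound} reducing to the stated bound via $\|L\varphi_k\|_Y=\|\varphi_k\|_V$. For item~4 I would invoke \Cref{weirdprojpsi}: its hypothesis is again met since $L^{-1}$ is bounded (and in the case $s_X=\infty$ its condition~(i) holds as well, $\{\Pi_r^Y\}$ being uniformly bounded), so for $x=Kg\in\mathcal{R}(K)$ we get $L^{-1}\Pi_r^YLx\to x$ in $H$, i.e.\ $P_r^Vx\to x$ in $H$, and \eqref{eqn:weirdprojpsi_error_bound} becomes the displayed bound after substituting $L^{-1}\Pi_r^YL\varphi_k=P_r^V\varphi_k$ and $\|\cdot\|_X=\|\cdot\|_H$.

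I do not expect any genuine obstacle: no new estimates are needed. The only care required is bookkeeping with the two Hilbert spaces — attributing each occurrence of $\mathcal{R}(K)$, each norm, and each ``POD eigenvalue'' hypothesis to the correct incarnation of $K$ (into $H$ versus into $V$), and checking the uniform-boundedness side conditions in the three cited theorems. All of those side conditions are trivially satisfied here because the relevant projection $\Pi_r^Y=P_r^V$ is orthogonal and $L^{-1}$ is bounded.
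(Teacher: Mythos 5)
Your proposal is correct and matches the paper's argument, which likewise obtains items 1--4 from \Cref{contyconvergence}, \Cref{weirdprojX}, and \Cref{weirdprojpsi} by translating through the dictionary between $\{\Pi_r^X,\Pi_r^Y,L\Pi_r^XL^{-1},L^{-1}\Pi_r^YL\}$ and $\{P_r^H,P_r^V\}$, using that $\Pi_r^Y=P_r^V$ is orthogonal (hence uniformly bounded) and $L^{-1}$ is bounded. The only inessential difference is in item 2, where the paper gets the $H$-convergence from item 2 of \Cref{convergencewithboundedop} while you get it directly from the continuous embedding $\|\cdot\|_H\le C_V\|\cdot\|_V$; both are valid.
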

Note that since $\Pi_r^Y$ is orthogonal, item 1 and item 2 follow from \Cref{contyconvergence} and item 2 of \Cref{convergencewithboundedop}. Items 3 and 4 can be obtained from \Cref{weirdprojX}, \Cref{weirdprojpsi}, and the fact that $L^{-1}$ is bounded.%  Of course, for part 1 and part 3 we also have the error bounds in \Cref{contyconvergence} and \Cref{weirdprojX}, which we do not repeat here.

The pointwise convergence results above are more complete and more sharp than the results in \cite[Proposition 5.5]{Singler14}.  First, item 2 is shown in \cite[Proposition 5.5]{Singler14} under the assumption that all the POD singular values for $\{w_j\} \subset L^2(\mathcal{O};V)$ are nonzero; as discussed in \Cref{Section:basic} this is a more restrictive assumption than the POD eigenvalues all being nonzero, as is required above.  Next, the convergence result in item 3 is shown in \cite[Proposition 5.5]{Singler14}; however, the error bound in item 3 is new.  Also, items 1 and 4 are completely new.

For item 3, we note that an error bound was given in the proof of \cite[Proposition 5.5]{Singler14}. However, that error bound does not converge to zero as fast as the error bound given in \Cref{weirdprojX}.  Specifically, the error bound in \cite{Singler14} is a constant multiple of $ ( \sum_{k > r} |(g,f_k)_{S}|^2 )^{1/2} $.  However, the error bound in item 3 can be bounded above by
$$
\|y - P_r^H y \|_V \leq \bigg( \sum_{k > r}  |(g,f_k)_{S}|^2  \bigg)^{1/2}  \bigg( \sum_{k > r}  \sigma_k^2\, \|\varphi_k\|_V^2 \bigg)^{1/2},
$$
and both terms in parentheses tend to zero as $ r $ increases by Parseval's equality and \Cref{HSconv} (see the proof of \Cref{weirdprojX}).  Therefore, the error bound in item 3 is an improvement over the error bound in \cite{Singler14}.

Finally, we consider boundedness of the non-orthogonal POD projections $P_r^H: V \to V$ and $P_r^V: H \to H$.  For each fixed $ r $, we showed in \cite[Lemma 3.3]{Singler14} that $P_r^H: V \to V$ is bounded.  We did not consider the boundedness of $P_r^V: H \to H$ in \cite{Singler14}.  Below, we use \Cref{boundedext} to show $P_r^H: V \to V$ is bounded and also give a condition guaranteeing $P_r^V: H \to H$ has a bounded extension.  However, we still do not know if these non-orthogonal POD projections are uniformly bounded in operator norm. 

%Now we turn to showing that certain assumptions made throughout the paper hold for this example. First note we can d

Define the linear operator $A: \mathcal{D}(A) \subset H \to H$ by $$(Au,v)_H = (u,v)_V$$ for all $u \in \mathcal{D}(A)$ and $v \in V$ (see, e.g., \cite[Section II.2]{Temam97}). We know $A$ is closed. Now we apply this to our example. For all $x \in \mathcal{D}(L) = V$ and $y \in \mathcal{D}(L^*)$ we have
\begin{align*}
(x,L^*y)_H &= (Lx,y)_V = (x,y)_V \\
%\Rightarrow \overline{(y,x)_V} &= \overline{ (L^* y, x)_H} \\
\Rightarrow  (L^* y, x)_H &= (y,x)_V.
\end{align*}
Thus, $L^* = A$ and $\mathcal{D}(L^*) = \mathcal{D}(A)$. For PDE solution data we often have $ \{Aw_j\} \subset L^2(\mathcal{O};H)$ for each $j$; see \cite{Temam97} for examples.  In this case, since $ \varphi_k = \sigma_k^{-1} K f_k $ we can use the Bochner integral result in \Cref{bochnerint} to show $ \varphi_k \in \mathcal{D}(A) $ whenever $ \sigma_k > 0 $.

Therefore, since $ L^{-1} $ is bounded, item 1 and item 4 of \Cref{boundedext} give the following result.
%
%, so the assumptions we make on $Lw_j = w_j \subset L^2(S;\mathcal{D}(L^*))$ for each $j$ are reasonable assumptions. Also, for this example we know $L^{-*}$ exists and is bounded since $L^{-1}$ is bounded. So $\{\varphi_k\}\subset L^2(S;\mathcal{D}(L^{-*}))$. With this information we can write \Cref{boundedext} in terms of this example, which will allow us to get the results mentioned above. In particular, we need the bounded extension to show part 3 of \Cref{translate convergence example 1}.
\begin{proposition}
	Let $ r $ be fixed.  The operator $P_r^H: V \to V$ is bounded, and if $\{Aw_j\}_{j=1}^m \subset L^2(\mathcal{O};H)$, then the operator $P_r^V: H \to H$ can be extended to a bounded operator.
\end{proposition}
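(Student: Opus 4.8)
The plan is to obtain both assertions as immediate specializations of \Cref{boundedext}, using the dictionary between the POD projections of this paper and the operators $P_r^H$, $P_r^V$ recorded above: the first claim is the boundedness of $L\Pi_r^X L^{-1} = P_r^H : V \to V$, and the second is the existence of a bounded extension of $L^{-1}\Pi_r^Y L = P_r^V : H \to H$. So the whole task reduces to checking that the hypotheses of the relevant parts of \Cref{boundedext} are satisfied in this example.

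For the first claim I would simply invoke item 1 of \Cref{boundedext}. Since $V$ is continuously embedded in $H$, the operator $L^{-1} : V \to H$ given by $L^{-1}v = v$ is bounded, and therefore $L\Pi_r^X L^{-1} = P_r^H : V \to V$ is bounded for each fixed $r$; nothing further is needed.

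For the second claim I would apply item 4 of \Cref{boundedext}, verifying its hypotheses in turn: $\Pi_r^Y = P_r^V$ is by construction the orthogonal projection onto $Y_r = \mathrm{span}\{L\varphi_k\}_{k=1}^r = \mathrm{span}\{\varphi_k\}_{k=1}^r$; $L^{-1}$ is bounded as noted above; and $\mathcal{D}(L) = V$ is dense in $H$ by assumption on the spaces. The only substantive point is the regularity condition $\{L\varphi_k\}_{k=1}^r = \{\varphi_k\}_{k=1}^r \subset \mathcal{D}(L^*) = \mathcal{D}(A)$. This is exactly where the extra hypothesis $\{Aw_j\}_{j=1}^m \subset L^2(\mathcal{O};H)$ enters: because $A$ is closed and, by the main assumption, $\sigma_r > 0$ so that $\varphi_k = \sigma_k^{-1}Kf_k$ for $k = 1,\dots,r$, the Bochner integral result \Cref{bochnerint} applied to the defining integral of $K$ shows $\varphi_k \in \mathcal{D}(A)$, as already sketched in the paragraph preceding the proposition. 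With all hypotheses in place, item 4 of \Cref{boundedext} produces a bounded extension of $P_r^V : H \to H$ to all of $H$.

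I do not expect a genuine obstacle here, since the statement is essentially a corollary of \Cref{boundedext}; the only mildly technical step is the verification $\varphi_k \in \mathcal{D}(A)$, which rests on the closedness of $A$ together with \Cref{bochnerint} and has already been carried out in the text. It is worth stressing, in line with the surrounding discussion, that this argument establishes boundedness only for each fixed $r$, and does not address whether $\{P_r^H : V \to V\}$ or the extensions of $\{P_r^V : H \to H\}$ are uniformly bounded in $r$, which remains open.
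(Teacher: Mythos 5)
Your proposal is correct and follows essentially the same route as the paper: the first claim is item 1 of \Cref{boundedext} applied with $L^{-1}:V\to H$ bounded by the continuous embedding, and the second is item 4, with the regularity condition $\{\varphi_k\}_{k=1}^r\subset\mathcal{D}(A)=\mathcal{D}(L^*)$ supplied by the closedness of $A$, the hypothesis $\{Aw_j\}\subset L^2(\mathcal{O};H)$, and \Cref{bochnerint} applied to $\varphi_k=\sigma_k^{-1}Kf_k$. Your closing remark that this gives boundedness only for each fixed $r$ also matches the paper's discussion.
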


\subsection{Example 2}\label{example2}

Next, consider the case where $X = V$, $Y = H$, and $L:V \to H$ is defined by by $Lv =v $ for all $v \in V$.  Then $L^{-1}: H \to V$ is given by $L^{-1}v = v$ for all $ v \in \mathcal{D}(L^{-1})=V $.  Note that in this case $L$ is bounded by the continuous embedding property.  Again, the assumption on the data gives $ \{ w_j \} \subset L^2(\mathcal{O};X) $ and $ \{ Lw_j \} \subset L^2(\mathcal{O};Y) $.  Therefore, the main assumption and the $ L^{-1} $ assumption hold.  % Similar to the previous example, $L^{-1}$ can be bounded or unbounded, depending on the dimensions of the spaces. If H and V are both infinite dimensional, then $L^{-1}$ is unbounded.

Since $ X = V $, in this example the POD modes $ \{ \varphi_k \} $ are an orthonormal basis for $ V $.  We have $ X_r = \mathrm{span}\{ \varphi_k \}_{k=1}^r \subset V $, and $ Y_r = \mathrm{span}\{ L \varphi_k \}_{k=1}^r = \mathrm{span}\{ \varphi_k \}_{k=1}^r \subset H $.  The POD modes $ \{ \varphi_k \} $ may not be orthogonal in $ H $.  The operator $K^Y = L K $ is the POD operator $ K : S \to H $.  As in Example 1, $ \Pi_r^X :X \to X $ is the orthogonal projection onto $ X_r $, and $ \Pi_r^Y :Y \to Y $ is the orthogonal projection onto $ Y_r $.

The projections in this work are related to the projections $ Q_r^H $ and $ Q_r^V $ from \cite[Definition 3.2]{Singler14} as follows:
\begin{itemize}
	\item $\Pi_r^X:X \to X$ is equal to the orthogonal projection $ Q_r^V: V \to V$.
	\item $\Pi_r^Y: Y \to Y$ is equal to the orthogonal projection $Q_r^H: H \to H$.
	\item $L \Pi_r^X L^{-1}: Y \to Y$ is equal to the operator $Q_r^V: H \to H$.
	\item $L^{-1} \Pi_r^Y L: X \to X$ is equal to the operator $Q_r^H: V \to V$. 
\end{itemize}

As before, the main data approximation error results in \Cref{conterror} become
\begin{align*}
\sum_{j=1}^{m} \int_\mathcal{O} \| w_j(t) - Q_r^V w_j(t) \|^2_{H} dt  &=  \sum_{k>r} \sigma_k^2 \| \varphi_k \|^2_H,\\
\sum_{j=1}^{m} \int_\mathcal{O} \| w_j(t) -  Q_r^H w_j(t) \|^2_{H} dt  &= \sum_{k>r} \sigma_k^2 \|\varphi_k - Q_r^H \varphi_k \|^2_H,\\
\sum_{j=1}^{m} \int_\mathcal{O} \| w_j(t) - Q_r^H  w_j(t) \|^2_{V} dt  &= \sum_{k>r} \sigma_k^2 \| \varphi_k - Q_r^H \varphi_k \|^2_V.
\end{align*}
Here the first two sums converge to zero as $r$ increases. However, we cannot show convergence of the last sum. This is because we do not know $L^{-1}$ is bounded or $\{Q_r^H\}$ is uniformly bounded as a family of operators mapping $ V $ to $ V $.  As before, the only improvement here compared to \cite{Singler14} is that $ \mathcal{O} $ is not restricted to be an interval.

We also have the following pointwise convergence results. 
\begin{proposition} As $r$ increases we have
	\begin{enumerate}
		\item $\|Q_r^H y - y \|_H \to 0$ for all $y \in H$, and for $ y = K g $ we have
		$$
		\|Q_r^H y - y\|_H \leq \sum_{k>r} \sigma_k |(g,f_k)_{S}| \, \|Q_r^H \varphi_k - \varphi_k\|_H.
		$$
		\item $\|Q_r^V y - y \|_H \to 0$ for all $y \in V$, and for $ y = K g $ we have
		$$
		\|Q_r^V y - y\|_H \leq \sum_{k>r} \sigma_k |(g,f_k)_{S}| \, \|\varphi_k\|_H.
		$$
		\item For $ x = K g $ we have
		$$
		\|Q_r^H x - x\|_V \leq \sum_{k > r} \sigma_{k} | (g,f_k)_S | \|Q_r^H \varphi_k - \varphi_k \|_V.
		$$
		If also $s_X < \infty$ or $L^{-1}$ is bounded, then the error goes to zero as $r$ increases.
	\end{enumerate}
\end{proposition}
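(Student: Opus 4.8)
The plan is to deduce all three items directly from the general pointwise convergence theorems of \Cref{Section:convergence}, after first recording the features of the Example 2 setup that make their hypotheses available. The key observations are: (a) $L:V\to H$, $Lv=v$, is bounded by the continuous embedding, so the ``$L$ bounded'' branch of every relevant theorem applies; (b) $\mathcal{R}(L)=V$ is dense in $H$; (c) since $L$ is the identity embedding, $K^Y=LK=K$ as a map into $H$, so writing $y=Kg$ is the same as writing $y=K^Yg$, and $\mathcal{R}(K^Y)=\mathcal{R}(K)$; (d) $\{\Pi_r^Y\}=\{Q_r^H:H\to H\}$ are orthogonal projections, hence uniformly bounded in operator norm; and (e) $L\varphi_k=\varphi_k$ and $L^{-1}\varphi_k=\varphi_k$ for every POD mode $\varphi_k\in V$. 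With these in hand, each assertion is a translation of an earlier result through the dictionary $\Pi_r^X=Q_r^V$, $\Pi_r^Y=Q_r^H$, $L\Pi_r^X L^{-1}=Q_r^V$, $L^{-1}\Pi_r^Y L=Q_r^H$ recorded above.

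For item 1, note $Q_r^H=\Pi_r^Y$ is the orthogonal projection onto $Y_r$. Because $L$ is bounded, $\mathcal{R}(L)$ is dense in $H$, and $\{\Pi_r^Y\}$ is uniformly bounded, \Cref{bounded convergence of Pi_r^Psi} gives $Q_r^H y\to y$ for all $y\in H$. For $y=Kg=K^Yg$, the error bound \eqref{eqn:PirY_error_bound} of \Cref{contyconvergence} reads $\|\Pi_r^Y y-y\|_Y\le\sum_{k>r}\sigma_k|(g,f_k)_S|\,\|\Pi_r^Y L\varphi_k-L\varphi_k\|_Y$; substituting $L\varphi_k=\varphi_k$, $Y=H$, and $\Pi_r^Y=Q_r^H$ yields the stated bound. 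For item 2, $Q_r^V=L\Pi_r^X L^{-1}$ as an operator on $H$, and $V=\mathcal{R}(L)=\mathcal{D}(L^{-1})$; since $L$ is bounded, the first part of \Cref{convergencewithboundedop} gives $\|Q_r^V y-y\|_H\to 0$ for all $y\in V$, and for $y=Kg=K^Yg$, \Cref{weirdprojX} (applicable since $L$ is bounded) gives $\|y-L\Pi_r^X L^{-1}y\|_Y\le\sum_{k>r}\sigma_k|(g,f_k)_S|\,\|L\varphi_k\|_Y$, which is the claimed inequality after using $L\varphi_k=\varphi_k$ and $Y=H$.

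For item 3, $Q_r^H=L^{-1}\Pi_r^Y L$ as an operator on $X=V$. Since $L$ is bounded, \Cref{weirdprojpsi} applies and gives, for $x=Kg$, the estimate $\|x-L^{-1}\Pi_r^Y Lx\|_X\le\sum_{k>r}\sigma_k|(g,f_k)_S|\,\|\varphi_k-L^{-1}\Pi_r^Y L\varphi_k\|_X$, which is the stated bound after the translation $X=V$, $L^{-1}\Pi_r^Y L=Q_r^H$. For the convergence claim: if $s_X<\infty$, \Cref{weirdprojpsi} gives it outright; if instead $L^{-1}$ is bounded, then since $\{\Pi_r^Y\}=\{Q_r^H:H\to H\}$ is uniformly bounded, branch (i) of the convergence conclusion of \Cref{weirdprojpsi} applies.

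There is no deep obstacle here — the argument is essentially bookkeeping, parallel to the proof of \Cref{translate convergence example 1} in Example 1 — and the only genuine subtlety worth flagging is that $L^{-1}:H\to V$ need not be bounded (boundedness would force an estimate $\|v\|_V\le C\|v\|_H$ on $V$, the reverse of the embedding inequality), which is exactly why the convergence in item 3, unlike in items 1 and 2, must be stated conditionally.
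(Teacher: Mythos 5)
Your proof is correct and takes essentially the same route as the paper, which likewise derives the three items by specializing \Cref{bounded convergence of Pi_r^Psi}, \Cref{contyconvergence}, \Cref{convergencewithboundedop}, \Cref{weirdprojX}, and \Cref{weirdprojpsi} through the dictionary $\Pi_r^X=Q_r^V$, $\Pi_r^Y=Q_r^H$, $L\Pi_r^X L^{-1}=Q_r^V$, $L^{-1}\Pi_r^Y L=Q_r^H$ with $L$ the bounded identity embedding. If anything, your invocation of the first part of \Cref{convergencewithboundedop} for the ``all $y\in V$'' convergence in item 2 is slightly more careful than the paper's attribution of that item solely to \Cref{weirdprojX}, which by itself only covers $y\in\mathcal{R}(K^Y)$.
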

Since $L$ is bounded, item 1 follows from item 1 of \Cref{bounded convergence of Pi_r^Psi} and also \Cref{contyconvergence}.  Item 2 can be obtained from \Cref{weirdprojX}, using $ L $ is bounded.  \Cref{weirdprojpsi} gives item 3; note that we cannot guarantee convergence of the error without the extra assumptions since we only know $ L $ is bounded.

Again, these results improve on the results in \cite[Proposition 5.5]{Singler14}.  All of the error bounds are new.  The convergence result in item 2 was not stated in \cite{Singler14}, but it follows directly from the continuous embedding and $ \| Q_r^V y - y \|_V \to 0 $ for all $ y \in V $.  The convergence result in item 1 was given in \cite[Proposition 5.5]{Singler14}, however we made the assumption that all the POD singular values for $\{w_j\} \subset L^2(\mathcal{O};V)$ are nonzero.  Here, we proved the convergence result in item 1 without that assumption.

Next, we use the technique from \Cref{example1} to determine the boundedness of the non-orthogonal POD projections $ Q_r^H : V \to V $ and $ Q_r^V : H \to H $.  For this example, we have $A = L^{-*} = (L^{-1})^*$.  Therefore, if $\{A w_j\} \subset L^2(\mathcal{O};H)$, then we have $\{ \varphi_k \} \subset \mathcal{D}(L^{-*})$, just as in \Cref{example1}.  Since $ L $ is bounded, items 2 and 3 of \Cref{boundedext} give the following result.
%
%In this case, we immediately get the existence and boundedness of $L^*$. Further, using the technique from \Cref{example1} we can determine $A = L^{-*} = (L^{-1})^*$. If we assume $\{w_j\} \subset L^2(S;\mathcal{D}(A))$, we get both $\{ \varphi_k \} \subset \mathcal{D}(L^{-1})$ and $Lw_j = w_j \in \mathcal{D}(L^*)$. Using this information, we can write \Cref{boundedext}, in terms of this example. 
%
\begin{proposition}
	Let $ r $ be fixed.  The operator $Q_r^H: V \to V$ is bounded, and if $\{Aw_j\}_{j=1}^m \subset L^2(\mathcal{O};H)$, then the operator $Q_r^V: H \to H$ can be extended to a bounded operator on $H$.
\end{proposition}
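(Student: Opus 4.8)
The plan is to deduce both claims directly from \Cref{boundedext}, using the correspondence between $Q_r^H$, $Q_r^V$ and the operators $L^{-1}\Pi_r^Y L$, $L\Pi_r^X L^{-1}$ recorded above, together with the identification $A = L^{-*}$ established earlier in this example.

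First I would handle $Q_r^H : V \to V$. In this example $L : V \to H$ is the bounded continuous embedding, $X = V$, and the dictionary above identifies $L^{-1}\Pi_r^Y L : X \to X$ with $Q_r^H : V \to V$. Since $L$ is bounded and $\sigma_r > 0$ (so the relevant projections are defined), part 3 of \Cref{boundedext} applies verbatim and yields that $Q_r^H : V \to V$ is bounded. This step is essentially a translation of notation and requires no new computation.

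For the second claim I would invoke part 2 of \Cref{boundedext}, applied to $L\Pi_r^X L^{-1} : Y \to Y$, which the dictionary identifies with $Q_r^V : H \to H$. That statement requires $\mathcal{D}(L^{-1})$ dense in $Y$ and $\{\varphi_k\}_{k=1}^r \subset \mathcal{D}(L^{-*})$. The density hypothesis is immediate since $\mathcal{D}(L^{-1}) = V$ is dense in $H = Y$ by the standing assumption on $V$ and $H$. For the regularity hypothesis, since $L^{-1}$ is densely defined the operator $L^{-*}$ exists and is closed, and $A = L^{-*}$; then the argument already used in \Cref{example1} transfers: writing $\varphi_k = \sigma_k^{-1} K f_k$ and applying \Cref{bochnerint} to the closed operator $L^{-*}$ shows $\varphi_k \in \mathcal{D}(L^{-*})$ whenever $\sigma_k > 0$, so $\sigma_r > 0$ gives $\{\varphi_k\}_{k=1}^r \subset \mathcal{D}(L^{-*})$. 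Part 2 of \Cref{boundedext} then produces the bounded extension of $Q_r^V$ to $H$.

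The only mildly delicate point, and the only real obstacle, is verifying the hypotheses of \Cref{bochnerint} for $L^{-*}$, namely that the relevant $f_j$-weighted integrands lie in $\mathcal{D}(L^{-*})$ a.e.\ and that $L^{-*}$ applied to them lies in $L^2(\mathcal{O};H)$; this is exactly what the assumption $\{Aw_j\}_{j=1}^m \subset L^2(\mathcal{O};H)$ together with $A = L^{-*}$ supplies, precisely as in \Cref{example1}. I would not attempt any claim of uniform boundedness in $r$, consistent with the earlier remark that this remains open for these non-orthogonal POD projections.
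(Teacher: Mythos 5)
Your proposal is correct and follows essentially the same route as the paper: the paper likewise obtains the boundedness of $Q_r^H:V\to V$ from part 3 of \Cref{boundedext} (using that $L$ is bounded), and the bounded extension of $Q_r^V:H\to H$ from part 2, after identifying $A = L^{-*} = (L^{-1})^*$ and using the closedness of $L^{-*}$ together with $\varphi_k = \sigma_k^{-1}Kf_k$ and \Cref{bochnerint} to get $\{\varphi_k\}_{k=1}^r\subset\mathcal{D}(L^{-*})$ from $\{Aw_j\}\subset L^2(\mathcal{O};H)$. Your write-up simply makes explicit the hypothesis checks that the paper leaves to the earlier discussion in \Cref{Section:mainassumptions} and \Cref{example1}.
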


\subsection{Example 3}\label{example4}

In order to demonstrate the usefulness of considering $\Pi_r^Y$ as a non-orthogonal projection, we consider the case of a Ritz projection as presented in \cite{IliescuWang13,Rubino18}. 

Consider the situation from Example 1 in \Cref{example1}: we have $X = H$, $Y = V$, and $L: X \to Y$ is defined by $Lv = v$ for all $v \in \mathcal{D}(L) = Y$.  Assume we have a continuous elliptic sesquilinear form\footnote{i.e., there exists constants $ C_a, c_a > 0 $ such that $ | a(u,v) | \leq C_a \| u \|_V \| v \|_V $ and $ c_a \| u \|_V^2 \leq \mathrm{Re} \, a(u,u) $ for all $ u, v \in V $} $a: V \times V \to \mathbb{K}$. Define the projection $P_r^V : V \to V$ onto $V_r := Y_r = \text{span}\{L\varphi_k\} = \text{span}\{\varphi_k\} \subset V$ as follows: let $P_r^V u := u_r \in V_r$ be the unique solution of 
$$
a(u_r, v_r) = a(u, v_r)  \quad  \mbox{for all $v_r \in V_r$.}
$$
The existence and uniqueness of such a solution is guaranteed by the Lax-Milgram Theorem.  We take $\Pi_r^Y = P_r^V$.

Note that the main difference between this example and Example 1 is that the projection $P_r^V$ is not the same.  However, for this example it can be checked that the family of projections, $\{ \Pi_r^Y \}$, is uniformly bounded.  Therefore, the same pointwise convergence results and error formulas from \Cref{example1} hold for this example with $ P_r^V : V \to V $ defined as above.  We note that these pointwise convergence results and error formulas are all new.  Bounds on the POD data approximation errors can be found in Lemma 3.4 in \cite{IliescuWang13} and Lemma 2.9 in \cite{Rubino18} in the discrete case; however, we have the exact formulas \eqref{eqn:ex1_error_formula_1}-\eqref{eqn:ex1_error_formula_3} for the POD data approximation errors in the continuous case.  Again, analogous error formulas can be derived for the discrete case using our results.

\section{Conclusions}

We proved new generalized error formulas for POD data approximation errors for both the discrete and continuous cases. We also showed convergence of these errors under certain conditions, and obtained new pointwise convergence results for POD projections. We demonstrated the application of our results to several example problems.  We leave the application of these results to the numerical analysis of POD model order reduction methods for PDEs to be considered elsewhere.

% and error bounds are also shown in some cases. These conditions include boundedness of the linear mapping, range conditions and uniform boundedness of the new POD projection. 

%Some convergence results are still unknown.

Some open questions remain.  When $L^{-1}$ is unbounded, we had to assume uniform boundedness of the POD projections $\{L^{-1} \Pi_r^Y L\}$ to show that the error formula in \eqref{last cont error} converges to zero as $r$ increases. We do not know if there is a simpler condition that yields convergence of the approximation error.  If $ L $ or $ L^{-1} $ is unbounded, we also do not know if the POD projections $\{ L \Pi_r^X L^{-1} \}$ and $\{ L^{-1} \Pi_r^Y L \}$ are uniformly bounded.  Both of these issues have been discussed in the context of Example 2 in \Cref{example2} in \cite{Chapelle12,Singler14}.  The second issue has also been discussed in the context of Example 1 in \Cref{example1} in \cite{XieWellsWangIliescu18,KeanSchneier19}; in these works, the $H^1$ stability of the $L^2$ POD projection is of interest.

% The uniform boundedness of these projections would have ramifications in other areas as well. In particular, when studying the $H^1$ stability of the $L^2$ projection, it would be helpful to know if these projections are uniformly bounded in operator norm (Kean, Schneier 2019) and (Xie, Wells, Wang, Iliescu 2018).

\appendix
\section{Optimality of Discrete and Continuous POD}
\label{sec:optimality_POD_proof}

To be complete, we present a brief proof of the optimality of POD for low rank data approximation in both the discrete and continuous cases.  Our problem statement and proof strongly rely on ideas from \cite{Volkwein04} and \cite{Djouadi08}.

\textbf{POD optimality problem:}  Let $ X $ be a separable Hilbert space, and let $ S = \mathbb{K}^s_\Gamma $ in the discrete case or $ S = L^2(\mathcal{O};\mathbb{K}^m) $ in the continuous case, where $ \mathbb{K} = \mathbb{R} $ or $ \mathbb{K} = \mathbb{C} $; see \Cref{Section:basic} for details.  Suppose we have given data $ \{ w_j \}_{j=1}^s \subset X $ in the discrete case or $ \{ w_j \}_{j=1}^m \subset L^2(\mathcal{O};X) $ in the continuous case.  The POD optimality problem is to find coefficients $ \{ a_{k} \} \subset \mathbb{K} $ and basis elements $ \{ s_k \} \subset S $ and $ \{ \eta_k \} \subset X $ so that the $ r $th order approximations
\begin{align*}
  w_j^r &= \sum_{k=1}^r a_{k} s_{k,j} \eta_k  &  &\mbox{for $ j = 1, \ldots, s $ (discrete case),}\\
  w_j^r(t) &= \sum_{k=1}^r a_{k} s_{k,j}(t) \eta_k  &  &\mbox{for $ j = 1, \ldots, m $ (continuous case),}
\end{align*}
minimize the data approximation error
\begin{align*}
E_r(w^r) &= \sum_{j=1}^s \gamma_j \| w_j - w_j^r \|_X^2  &  &\mbox{(discrete case),}\\
E_r(w^r) &= \sum_{j=1}^m \int_{\mathcal{O}} \| w_j(t) - w_j^r(t) \|_X^2 \, dt  &  &\mbox{(continuous case).}
\end{align*}
\begin{remark}
	In many papers on POD, the basis elements $ \{ \eta_k \} \subset X $ are required to be orthonormal, and $ w_j^r $ is also required to equal the orthogonal projection of $ w_j $ onto $ \mathrm{span}\{ \eta_k \}_{k=1}^r $.  Therefore, the POD problem above allows more general approximations.  The final result is the same.
\end{remark}
%
%In many papers on POD, the basis elements $ \{ \eta_k \} \subset X $ are required to be orthonormal, and $ w_j^r $ is also required to equal the orthogonal projection of $ w_j $ onto the span of the first $ r $ basis elements.  Therefore, the POD problem above allows more general approximations.  The final result is the same.

\textbf{Notation:}  For given data $ \{ y_j \}_{j=1}^s \subset X $ in the discrete case or $ \{ y_j \}_{j=1}^m \subset L^2(\mathcal{O};X) $ in the continuous case, we let $ K(y) : S \to X $ denote the POD operator for the data and we let $ K^*(y) : X \to S $ denote the Hilbert adjoint operator of $ K(y) $.

The proof of the next result follows directly from definitions and is omitted.
\begin{lemma}\label{prop:POD_op_finite_dim_data}
	If the data is given by
	$$
	  y_j = \sum_{k=1}^p \alpha_k s_{k,j} \eta_k  \quad  \mbox{(discrete case)},  \quad  y_j(t) = \sum_{k=1}^p \alpha_{k} s_{k,j}(t) \eta_k,  \quad  \mbox{(continuous case)}
	$$
	for each $ j $ with $ \{ \alpha_{k} \} \subset \mathbb{K} $, $ \{ s_k \} \subset S $, and $ \{ \eta_k \} \subset X $, then the POD operator $ K(y) : S \to X $ is given by
	$$
	K(y) f = \sum_{k=1}^p \alpha_{k} (f,\overline{s_{k}})_S \eta_k,  \quad  f \in S.
	$$
\end{lemma}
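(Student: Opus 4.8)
The plan is simply to unfold the definition of the POD operator $ K(y) $ on an arbitrary $ f \in S $, substitute the assumed finite representation of the data, and rearrange finitely many sums (in the continuous case, also commuting a finite sum with an integral), while keeping track of the conjugation convention for the inner product on $ S $. The discrete and continuous cases are formally identical once $ \sum_{j=1}^s \gamma_j(\cdot) $ is replaced by $ \sum_{j=1}^m \int_{\mathcal{O}} (\cdot)\,dt $, so I would carry them out in parallel.

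In the discrete case, for $ f = [f_1,\ldots,f_s]^T \in S $ one has
\[
  K(y) f = \sum_{j=1}^s \gamma_j f_j y_j = \sum_{j=1}^s \gamma_j f_j \sum_{k=1}^p \alpha_k s_{k,j} \eta_k = \sum_{k=1}^p \alpha_k \Bigl( \sum_{j=1}^s \gamma_j f_j s_{k,j} \Bigr) \eta_k ,
\]
where interchanging the two finite sums is immediate, and since $ \overline{s_k} \in S $ has $ j $th component $ \overline{s_{k,j}} $ and $ (\cdot,\cdot)_S $ is conjugate linear in its second slot, $ (f,\overline{s_k})_S = \sum_{j=1}^s \gamma_j f_j \overline{\overline{s_{k,j}}} = \sum_{j=1}^s \gamma_j f_j s_{k,j} $, which is exactly the bracketed factor; substituting gives the claimed formula. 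The continuous case proceeds identically: the scalar $ t \mapsto f_j(t) s_{k,j}(t) $ lies in $ L^1(\mathcal{O}) $ by Cauchy--Schwarz (both factors are in $ L^2(\mathcal{O}) $), so $ \int_{\mathcal{O}} f_j(t) s_{k,j}(t)\eta_k\,dt = \bigl(\int_{\mathcal{O}} f_j(t) s_{k,j}(t)\,dt\bigr)\eta_k $ is well defined, and pulling the finite sum over $ k $ outside the integral yields
\[
  K(y) f = \sum_{j=1}^m \int_{\mathcal{O}} f_j(t) y_j(t)\,dt = \sum_{k=1}^p \alpha_k \Bigl( \sum_{j=1}^m \int_{\mathcal{O}} f_j(t) s_{k,j}(t)\,dt \Bigr) \eta_k = \sum_{k=1}^p \alpha_k (f,\overline{s_k})_S \eta_k ,
\]
where the last equality again uses $ (f,\overline{s_k})_S = \sum_j \int_{\mathcal{O}} f_j(t) s_{k,j}(t)\,dt $.

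I do not expect any genuine obstacle: the argument reduces to rearranging finite sums plus one elementary integrability remark. The only point deserving attention is the complex conjugate, which is forced by the convention that $ (\cdot,\cdot)_S $ is linear in the first argument and conjugate linear in the second; this is precisely why the basis elements appearing in the final formula are $ \overline{s_k} $ rather than $ s_k $, and of course the conjugation is vacuous when $ \mathbb{K} = \mathbb{R} $.
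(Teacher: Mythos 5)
Your proof is correct and is exactly the direct-from-definitions computation that the paper alludes to when it says the proof ``follows directly from definitions and is omitted'': substitute the finite representation, interchange the finite sums (and the integral in the continuous case), and identify the factor $\sum_j \gamma_j f_j s_{k,j}$ (resp.\ $\sum_j \int_{\mathcal{O}} f_j s_{k,j}\,dt$) as $(f,\overline{s_k})_S$ under the stated conjugate-linearity convention. Nothing is missing, and your handling of the conjugation and the $L^1$ integrability remark is exactly the level of care required.
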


Next, we present the discrete version of the Hilbert-Schmidt norm result for a continuous POD operator in \Cref{HS}.
\begin{lemma}\label{prop:HS_discrete}
	For given data $ \{ y_j \}_{j=1}^s \subset X $ in the discrete case, the Hilbert-Schmidt norm of the POD operator $ K(y) : S \to X $ is given by
	$$
	  \| K(y) \|_{\mathrm{HS}(S,X)}^2 = \sum_{j=1}^s \gamma_j \| y_j \|_X^2.
	$$
\end{lemma}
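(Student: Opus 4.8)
The plan is to compute the Hilbert--Schmidt norm directly from its definition using a conveniently chosen orthonormal basis of $S = \mathbb{K}^s_\Gamma$. Since the inner product on $S$ is the $\Gamma$-weighted one, $(u,v)_S = \sum_{j=1}^s \gamma_j u_j \overline{v_j}$, the standard unit vectors $\{e_j\}_{j=1}^s$ of $\mathbb{K}^s$ satisfy $(e_i,e_j)_S = \gamma_i \delta_{ij}$, so they are orthogonal but not normalized. Rescaling, the set $\{ g_j \}_{j=1}^s$ with $g_j := \gamma_j^{-1/2} e_j$ is an orthonormal basis for $S$ (recall that the weights $\gamma_j$ are positive).

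Next I would evaluate $K(y)$ on this basis. From the definition of the discrete POD operator we have $K(y) e_j = \sum_{\ell=1}^s \gamma_\ell (e_j)_\ell \, y_\ell = \gamma_j y_j$, and hence $K(y) g_j = \gamma_j^{-1/2} K(y) e_j = \gamma_j^{1/2} y_j$, so that $\|K(y) g_j\|_X^2 = \gamma_j \|y_j\|_X^2$.

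Summing over the basis then gives
$$
  \|K(y)\|_{\mathrm{HS}(S,X)}^2 = \sum_{j=1}^s \|K(y) g_j\|_X^2 = \sum_{j=1}^s \gamma_j \|y_j\|_X^2,
$$
which is the claimed identity; in particular the sum is finite, consistent with $K(y)$ being a finite-rank (hence Hilbert--Schmidt) operator. There is essentially no obstacle here: the only point requiring care is the normalization of the basis induced by the weighted inner product on $S$, after which the computation is a one-line substitution. (Alternatively, one could mirror the proof of \Cref{HS} by working with the adjoint $K^*(y) x = [(x,y_1)_X, \ldots, (x,y_s)_X]^T$ together with Parseval's identity, but the direct argument above is shorter.)
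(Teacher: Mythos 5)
Your proof is correct, but it takes a different route from the paper's. You compute the Hilbert--Schmidt norm directly on the orthonormal basis $\{\gamma_j^{-1/2} e_j\}_{j=1}^s$ of the finite-dimensional weighted space $S=\mathbb{K}^s_\Gamma$, using $K(y)e_j=\gamma_j y_j$; the normalization check $(e_i,e_j)_S=\gamma_i\delta_{ij}$ is exactly the point that needs care, and you handle it. The paper instead works with the adjoint: it takes an orthonormal basis $\{\xi_k\}$ of $X$, writes $\|K(y)\|_{\mathrm{HS}(S,X)}^2=\|K^*(y)\|_{\mathrm{HS}(X,S)}^2=\sum_{k\geq 1}\sum_{j=1}^s\gamma_j|(\xi_k,y_j)_X|^2$, interchanges the sums, and applies Parseval's identity in $X$ --- precisely the alternative you mention parenthetically, chosen there to mirror the proof of the continuous-case result (\Cref{HS}), where the domain $S=L^2(\mathcal{O};\mathbb{K}^m)$ is infinite-dimensional and no finite basis of $S$ is available. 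Your argument is shorter and avoids both the possibly infinite sum over a basis of $X$ and the interchange of summation, at the cost of being specific to the discrete case; the paper's buys uniformity with the continuous setting. Either is a complete proof.
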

\begin{proof}
  Let $ \{ \xi_k \}_{k \geq 1} $ be an orthonormal basis for $ X $.  We have
  \begin{align*}
    \| K(y) \|_{\mathrm{HS}(S,X)}^2  &=  \| K^*(y) \|_{\mathrm{HS}(X,S)}^2  =  \sum_{k \geq 1} \| K^*(y) \xi_k \|_S^2  =  \sum_{k \geq 1}  \sum_{j=1}^s  \gamma_j | ( \xi_k, y_j )_X |^2  \\
      &=  \sum_{j=1}^s  \gamma_j  \sum_{k \geq 1} | ( \xi_k, y_j )_X |^2  =  \sum_{j=1}^s  \gamma_j  \| y_j \|_X^2
  \end{align*}
  by Parseval's inequality.
\end{proof}

Now we prove the main optimality result.  We rely on the fact that the rank $ r $ truncated SVD of $ K(y) $ is the optimal rank $ r $ approximation to $ K(y) $ in the Hilbert-Schmidt norm; see, e.g., \cite[Section III.7, Theorem 7.1]{GohbergKrein69}.
\begin{theorem}
	Let $ \{ w_j \}_{j=1}^s \subset X $ in the discrete case or $ \{ w_j \}_{j=1}^m \subset L^2(\mathcal{O};X) $ in the continuous case be given data, and let $ \{ \sigma_i, f_i, \varphi_i \} \subset \mathbb{R} \times S \times X $ be the ordered singular values of $ K(w) : S \to X $ and the corresponding orthonormal bases of singular vectors.  A solution of the POD problem is given by $ \{ w_j^r \}_{j=1}^s \subset X $ in the discrete case or $ \{ w_j^r \}_{j=1}^m \subset L^2(\mathcal{O};X) $ in the continuous case, where
	\begin{align*}
	  w_j^r &= \sum_{k = 1}^r \sigma_k \overline{f_{k,j}} \varphi_k = \sum_{k = 1}^r \big( w_j, \varphi_k \big)_X \varphi_k  &  &\mbox{(discrete case),}\\
	  w_j^r(t) &= \sum_{k = 1}^r \sigma_k \overline{f_{k,j}(t)} \varphi_k = \sum_{k = 1}^r \big( w_j(t), \varphi_k \big)_X \varphi_k  &  &\mbox{(continuous case).}
	\end{align*}
	The minimum approximation error is given by
	$$
	E_r^\mathrm{min} := E_r(w^r) = \sum_{k > r} \sigma_k^2 < \infty,
	$$
	and $ E_r^\mathrm{min} \to 0 $ as $ r $ increases.
\end{theorem}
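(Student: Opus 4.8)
The plan is to reduce the POD optimality problem to the classical fact that the rank $r$ truncated SVD is the best rank $r$ approximation in the Hilbert--Schmidt norm. The key observation is that the POD operator depends linearly on the data: in the discrete case $K(y)f = \sum_j \gamma_j f_j y_j$ and in the continuous case $K(y)f = \sum_j \int_\mathcal{O} f_j(t) y_j(t)\,dt$, so the assignment $y \mapsto K(y)$ is linear and $K(w - w^r) = K(w) - K(w^r)$ for any candidate approximation $w^r$. Combining this with \Cref{prop:HS_discrete} in the discrete case and \Cref{HS} in the continuous case (the latter applies since each $w_j^r$, being a finite sum of $L^2(\mathcal{O})$ functions times elements of $X$, lies in $L^2(\mathcal{O};X)$, hence so does $w_j - w_j^r$) identifies the data approximation error with a Hilbert--Schmidt norm:
$$
E_r(w^r) = \| K(w - w^r) \|_{\mathrm{HS}(S,X)}^2 = \| K(w) - K(w^r) \|_{\mathrm{HS}(S,X)}^2 .
$$

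Next I would establish the lower bound. Any admissible approximation $w^r$, being of the form $w_j^r = \sum_{k=1}^r a_k s_{k,j} \eta_k$ (and its continuous analog), has a POD operator $K(w^r)$ of rank at most $r$ by \Cref{prop:POD_op_finite_dim_data}. Therefore the infimum of $E_r(w^r)$ over admissible $w^r$ is bounded below by $\inf\{ \| K(w) - T \|_{\mathrm{HS}(S,X)}^2 : \mathrm{rank}(T) \le r \}$, which by the optimality of the truncated SVD in Hilbert--Schmidt norm (cited above from Gohberg--Krein) equals $\| K(w) - K_r(w) \|_{\mathrm{HS}(S,X)}^2 = \sum_{k>r}\sigma_k^2$, where $K_r(w)$ is the rank $r$ truncated SVD of $K(w)$. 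Then I would check that the stated choice attains this: writing $w_j^r = \sum_{k=1}^r \sigma_k \overline{f_{k,j}} \varphi_k$ in the form of \Cref{prop:POD_op_finite_dim_data} with $\alpha_k = \sigma_k$, $s_k = \overline{f_k}$, $\eta_k = \varphi_k$, that lemma gives $K(w^r)f = \sum_{k=1}^r \sigma_k (f,f_k)_S \varphi_k = K_r(w)f$, so $K(w - w^r) = K(w) - K_r(w)$ and $E_r(w^r) = \sum_{k>r}\sigma_k^2$, matching the lower bound; hence $w^r$ is optimal and $E_r^{\mathrm{min}} = \sum_{k>r}\sigma_k^2$. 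The equivalent expression $w_j^r = \sum_{k=1}^r (w_j,\varphi_k)_X \varphi_k$ follows from $K^*(w)\varphi_k = \sigma_k f_k$ when $\sigma_k>0$ (giving $(w_j,\varphi_k)_X = \sigma_k \overline{f_{k,j}}$) and from $\varphi_k \in \mathrm{ker}(KK^*) = \mathrm{ker}(K^*)$ when $\sigma_k = 0$ (giving $(w_j,\varphi_k)_X = 0 = \sigma_k\overline{f_{k,j}}$). Finally, $E_r^{\mathrm{min}} = \sum_{k>r}\sigma_k^2$ is finite and tends to $0$ as $r$ increases because $K(w)$ is Hilbert--Schmidt --- it has finite-dimensional range in the discrete case, and is Hilbert--Schmidt by \Cref{HS} in the continuous case --- so $\sum_k \sigma_k^2 < \infty$ and its tail vanishes.

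I do not expect a genuine obstacle here: the entire content is the linearity of $y \mapsto K(y)$, the two Hilbert--Schmidt identities, and the cited SVD optimality result. The only point requiring care is handling the discrete and continuous cases in parallel and ensuring the hypotheses of \Cref{HS} and \Cref{prop:POD_op_finite_dim_data} are met for $w^r$ and for $w - w^r$; no nontrivial estimate is involved.
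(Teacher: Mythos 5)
Your proposal is correct and follows essentially the same route as the paper's proof: identify $E_r$ with a Hilbert--Schmidt norm via \Cref{HS} and \Cref{prop:HS_discrete}, use linearity of $y \mapsto K(y)$ and \Cref{prop:POD_op_finite_dim_data} to show $K(w^r) = K_r(w)$ and that any admissible competitor yields a rank $\le r$ operator, and invoke the Gohberg--Krein optimality of the truncated SVD. The only (harmless) difference is that you handle the zero singular values directly via $\varphi_k \in \mathrm{ker}(K^*)$, whereas the paper first assumes $r \le s_X$ and then disposes of $r > s_X$ at the end.
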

\begin{proof}
	We first assume $ r \leq s_X $ so that $ \sigma_k > 0 $ for $ k = 1, \ldots, r $.
	
	First, the equivalence of the two expressions for $ w_j^r $ comes from $ K^*(w) \varphi_k = \sigma_k f_k $, $ \sigma_k > 0 $ for $ k = 1, \ldots, r $, and the formulas for $K^*(w) $.  Also, for $ g \in S $, \Cref{prop:POD_op_finite_dim_data} implies
	$$
	K(w^r) g = \sum_{k=1}^r \sigma_{k} (g,f_k)_{S} \varphi_k = K_r(w)g.
	$$
	Therefore, $ K(w^r) = K_r(w) $, where $ K_r(w) : S \to X $ is the $r$th order truncated SVD of the POD operator $ K(w) : S \to X $.  
	
	Next, by the Hilbert-Schmidt norm results \Cref{HS} and \Cref{prop:HS_discrete} and since the POD operator is linear in the data we have
	$$
	E_r(w^r) = \| K(w - w^r) \|^2_{\mathrm{HS}} = \| K(w) - K(w^r) \|_{\mathrm{HS}}^2 = \| K(w) - K_r(w) \|_{\mathrm{HS}}^2 = \sum_{k > r} \sigma_k^2.
	$$
	Also, since $ \| K(w) \|_{\mathrm{HS}} = \sum_{k \geq 1} \sigma_k^2 < \infty $, we have $ \sum_{k > r} \sigma_k^2 \to 0 $ as $ r $ increases.
	
	Now we show that this is the smallest value possible for the error.  Let coefficients $ \{ a_{k} \} \subset \mathbb{K} $ and basis elements $ \{ s_k \} \subset S $ and $ \{ \eta_k \} \subset X $ be given, and define the $ r $th order approximation
	$$
	  z_j^r = \sum_{k=1}^r \alpha_k s_{k,j} \eta_k  \quad  \mbox{(discrete case)},  \quad  z_j^r(t) = \sum_{k=1}^p \alpha_{k} s_{k,j}(t) \eta_k,  \quad  \mbox{(continuous case).}
	$$
	By \Cref{prop:POD_op_finite_dim_data}, $ K(z^r) $ has rank at most $ r $.  Therefore, we have
	$$
	  E_r(z^r) = \| K(w - z^r) \|^2_{\mathrm{HS}} = \| K(w) - K(z^r) \|^2_{\mathrm{HS}} \geq \sum_{k > r} \sigma_k^2.
	$$
	
	Next, if $ s_X < \infty $, then the result is true for $ r = s_X $.  Therefore, we have $ w_j = w_j^{s_X} $ for all $ j $, and this proves the result for $ r > s_X $.
\end{proof}

\bibliographystyle{plain}
\bibliography{references_POD_approx,POD_theory_new_references}

\end{document}